\documentclass[10pt]{article}
\usepackage{amsmath}
\usepackage{amsthm}
\usepackage{amsfonts}
\usepackage{amssymb}
\usepackage{latexsym} 
\usepackage{xcolor}

\usepackage[matrix,tips,graph,curve]{xy}

\linespread{1.065}

\makeatletter

\setlength\@tempdima  {5.5in}
\addtolength\@tempdima {-\textwidth}
\addtolength\hoffset{-0.5\@tempdima}
\setlength{\textwidth}{5.5in}
\setlength{\textheight}{8.75in}
\addtolength\voffset{-0.625in}

\makeatother

\makeatletter 
\@addtoreset{equation}{section}
\makeatother

\theoremstyle{plain}
\newtheorem{theorem}[equation]{Theorem}
\newtheorem{corollary}[equation]{Corollary}
\newtheorem{lemma}[equation]{Lemma}
\newtheorem{proposition}[equation]{Proposition}

\theoremstyle{definition}
\newtheorem{define}[equation]{Definition}

\newtheorem{remark}[equation]{Remark}

%%%%%% letters %%%%

%\newcommand{\fi}{\mathfrak{i}}

\newcommand{\IC}{\mathbb{C}}

\newcommand{\IQ}{\mathbb{Q}}
\newcommand{\IR}{\mathbb{R}}

%%%%%%% macros %%%%%

%% my definitions %%%

\newcommand{\End}{\mathrm{End}}

%\newcommand{\ind}{\mathrm{ind}}

%\renewcommand{\dim}{\mathrm{dim}}

 % Chern Character

%\renewcommand{\c}{\rm c}  % Chern class

\renewcommand\dim{{\rm dim\,}}
\renewcommand\det{{\rm det\,}}

\newcommand\vol{\mathrm{vol}}

 %{\bf QED}} 

%\newcommand\Sym[1]{{Sym^{#1}(\complexes^2)}}

%%%%%Delimiters%%%%

%\renewcommand{\(}{\left(}
%\renewcommand{\)}{\right)}

%%%% Different kind of derivatives %%%%%

%\newcommand{\pd}[1][2]{\frac{\partial #1}{\partial #2}}

%%%%% Arrows %%%%%
%\renewcommand{\ra}{\rightarrow}                   % right arrow
%\newcommand{\lra}{\longrightarrow}              % long right arrow
%\renewcommand{\la}{\leftarrow}                    % left arrow
%\newcommand{\lla}{\longleftarrow}               % long left arrow
%\newcommand{\ua}{\uparrow}                     % long up arrow
%\newcommand{\na}{\nearrow}                      %  NE arrow
%\newcommand{\llra}[1]{\stackrel{#1}{\lra}}      % labeled long right arrow
%\newcommand{\llla}[1]{\stackrel{#1}{\lla}}      % labeled long left arrow
%\newcommand{\lua}[1]{\stackrel{#1}{\ua}}      % labeled  up arrow
%\newcommand{\lna}[1]{\stackrel{#1}{\na}}      % labeled long NE arrow

\def\d/{/\mspace{-6.0mu}/}

%%%%%%%%%%%%%%%%%%% Mark's definitions %%%%%%%%%%%%%%%%%%%%

\newcommand{\p}{\partial}

%%%%%%%%%%%%% new definitions for the positive mass paper %%%%%%%%%

%%%%%%%%%%%%%%%%%%%%%%%

%%%%%%%%%%%%%%%%%%%%%%%%%%%%%%%%%%%%%%%%%%%%%

\begin{document}

\title{Geometry of the Smallest 1-form Laplacian Eigenvalue on Hyperbolic Manifolds}
\author{Michael Lipnowski and Mark Stern}
\footnotetext{Duke University and University of Toronto Departments of Mathematics;  \\
e-mail:  stern@math.duke.edu, partially supported by by NSF grant DMS 1005761 \\
e-mail: malipnow@math.utoronto.ca}
\date{}

 \maketitle

\section{Introduction} \label{intro}
Let $M^n$ be a closed Riemannian manifold.  The geometry of the smallest positive eigenvalue $\lambda_1^0(M)$ of the 0-form Laplace operator is well studied.  Work of Cheeger \cite{Cheeger} and Buser \cite{Buser} proves that $\lambda_1^0(M)$ is comparable to the square of the Cheeger isoperimetric constant of $M.$  

Much less is known about the smallest positive eigenvalue $\lambda_1^q(M)$ of the $q$-form Laplace operator for $0<q<n$.  In this paper, motivated by questions arising in the study of torsion cohomology of closed arithmetic \emph{hyperbolic} manifolds $M,$ we prove geometric upper and lower bounds for $\lambda_1^1(M).$

\subsection{Main results}
Let $M$ be a closed hyperbolic $n$-manifold.  Its fundamental group $\pi_1(M)$ acts by isometries on $\mathbb{H}^n.$  For $\gamma \in \pi_1(M),$ let $\ell(\gamma)$ denote the translation length of $\gamma.$  

Fix a basepoint $q_0 \in M.$  For $x,y \in \mathbb{H}^n,$ let $\alpha_{x,y}$ denote the oriented geodesic segment from $x$ to $y.$  If $\gamma$ bounds, define the \emph{area of $\gamma$} to be
\begin{equation}\label{area}\mathrm{Area}(\gamma) := \inf_{\partial S = \alpha_{q_0, \gamma q_0}} \mathrm{area}(S).\end{equation}

Define the \emph{stable area of $\gamma$}, denoted $\mathrm{sArea}(\gamma),$ to be 
\begin{equation}\label{sarea}\mathrm{sArea}(\gamma) := \inf \left\{\frac{\mathrm{area}(\gamma^m)}{m}:\gamma^m\text{ bounds} \right\},\end{equation}
assuming that $\gamma^k$ bounds for some integer $k.$  The quantity $\mathrm{sArea}(\gamma)$ is independent of the basepoint $q_0.$  

Under the latter assumption, recall that the \emph{stable commutator length of} $\gamma,$ denoted $\mathrm{scl}(\gamma),$ is defined by  
\begin{equation*}
\mathrm{scl}(\gamma) = \inf_m\inf_{\text{connected }S:\p S = m\gamma}\frac{\max\{-\chi(S),0\}}{m}.
\end{equation*}

On hyperbolic manifolds, stable area is always bounded above by $4\pi$ times stable commutator length \cite{C1}.

Let $\lambda_1^q(M)_{d^\ast}$ (resp. $\lambda_1^q(M)_d$) denote the smallest positive eigenvalue of the $q$-form Laplacian acting on $d^\ast \Omega^{q+1}(M)$ (resp. $d \Omega^{q-1}(M)$).  Then
$$\lambda_1^q(M) = \min \{ \lambda_1^q(M)_d, \lambda_1^q(M)_{d^\ast} \},$$
by the Hodge decomposition.

\begin{theorem}[Geometric Upper Bound for $\frac{1}{\lambda_1^1(M)_{d^\ast}}$] \label{introupperbound}
Let $M_0$ be a closed hyperbolic $n$-manifold.  Let $M$ be an arbitrary finite cover of $M_0$ with first betti number 0.  Then

%$$\frac{1}{\lambda_1^1(M)_{d^\ast}} \ll_{M_0} \vol(M)^3 \left(  \max_{\gamma \in \pi_1(M), \\ \mathrm{length}(\gamma) \ll_{M_0} \frac{1}{\lambda_1^0(M)} \cdot \log \vol M } \mathrm{sArea}(\gamma) \right)^2 ,$$
%where the implicit constants depend only on $M_0$ (in an explicit manner to be described later).   

\begin{align*}
\frac{1}{\sqrt{\lambda_1^1(M)_{d^\ast}}} &\leq   C^2 \left( 2\pi V+  D  \sup_{\gamma \in \pi_1(M): \ell(\gamma) \leq D}  \frac{\mathrm{sArea}(\gamma)}{\ell(\gamma)} \right)  + \frac{C}{2}   \sqrt{\vol(M)} \nonumber.\\
\end{align*}

The quantity $C$ is defined in Proposition \ref{supnormeigenfunction}; it is uniformly bounded above when the injectivity radius of $M$ is bounded below and $\lambda_1^1(M)$ is bounded above.

The quantities $V$ and $D$ are defined in Theorem \ref{bodyupperboundpositivebettinumber}; they respectively satisfy
$$V \leq V_{M_0} \cdot \vol(M) \text{ and } D \leq D_{M_0} \cdot \mathrm{diam}(M)$$
for constants $V_{M_0}, D_{M_0}$ depending only on $M_0$ (in an explicit manner to be described later).
\end{theorem}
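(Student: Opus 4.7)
The plan is to test the eigenform realizing $\lambda_1^1(M)_{d^\ast}$ against carefully chosen closed curves, using Stokes's theorem together with the pointwise control supplied by Proposition \ref{supnormeigenfunction}. Fix a unit-$L^2$-norm coexact eigenform $\omega$ with $\Delta\omega = \lambda\omega$, where $\lambda = \lambda_1^1(M)_{d^\ast}$. Since $d$ commutes with $\Delta$, the $2$-form $d\omega$ is a $\lambda$-eigenform of the $2$-form Laplacian with $\|d\omega\|_2^2 = \lambda$. Applying Proposition \ref{supnormeigenfunction} to $\omega$ and to $d\omega$ yields $\|\omega\|_\infty \leq C$ and $\|d\omega\|_\infty \leq C\sqrt{\lambda}$.

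Next I would convert stable area into an upper bound on $\int_\gamma\omega$. For any $\gamma\in\pi_1(M)$, the hypothesis $b_1(M)=0$ forces some iterate $\gamma^k$ to bound a $2$-chain $S_k$, and Stokes gives $k\int_\gamma\omega = \int_{S_k}d\omega$, so
\[\Bigl|\int_\gamma\omega\Bigr| \leq \|d\omega\|_\infty\cdot\frac{\mathrm{area}(S_k)}{k}.\]
Passing to the infimum in $k$ and $S_k$ yields $|\int_\gamma\omega|\leq C\sqrt{\lambda}\cdot\mathrm{sArea}(\gamma)$.

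The heart of the argument is to invert this inequality into a lower bound on $\sqrt\lambda$ by producing a closed curve $\gamma$ of length at most $D$ for which $|\int_\gamma\omega|/\ell(\gamma)$ is not too small. By pigeonhole, there is a point $p\in M$ with $|\omega(p)|\geq 1/\sqrt{\vol(M)}$; choose a direction $v\in T_pM$ realizing this value, follow the geodesic $\sigma$ from $p$ in direction $v$ for a time $\ell_0$, and close it by a path of length at most $2\,\mathrm{diam}(M)$ to obtain a loop $\gamma$. Along $\sigma$, the Bochner identity $\nabla^\ast\nabla\omega = (\lambda+n-1)\omega$ together with elliptic estimates controls $\|\nabla\omega\|_\infty$, so $\omega(\dot\sigma(t))$ remains close to $|\omega(p)|$ for bounded $t$, yielding $|\int_\gamma\omega| \gtrsim \ell(\gamma)/\sqrt{\vol(M)}$ modulo a closing-up error. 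Combining with the previous step and rearranging gives the shape $1/\sqrt{\lambda} \lesssim D\cdot\mathrm{sArea}(\gamma)/\ell(\gamma) + (\text{errors})$; the $2\pi V$ term should arise from Gauss--Bonnet accounting on the bounding surfaces in the Stokes step, and the residual $\tfrac{C}{2}\sqrt{\vol(M)}$ from the closing-up ambiguity. The full statement then follows by specializing the more general Theorem \ref{bodyupperboundpositivebettinumber} to $b_1(M)=0$.

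The main obstacle is this third step: converting the global $L^2$-mass of the eigenform into a pointwise integral along a specific loop of length bounded by a constant times $\mathrm{diam}(M)$. This requires gradient control strong enough that $\omega$ does not oscillate across the geodesic segment, and the closing-up error must be absorbable into the $2\pi V$ and $\tfrac{C}{2}\sqrt{\vol(M)}$ terms of the stated bound.
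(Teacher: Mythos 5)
Your first three steps match the paper precisely: using Proposition \ref{supnormeigenfunction} to get $\|\omega\|_\infty \leq C$ and $\|d\omega\|_\infty \leq C\sqrt{\lambda}$, then bounding period integrals by stable area via Stokes, are both exactly what the paper does (in Corollary \ref{lowerboundeigenvalue}). The difficulty you flag at the end is, however, a genuine gap that your sketch does not resolve, and the quantities do not cooperate as you hope.

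The problem is the pointwise-to-period step. You need a closed loop $\gamma$ of length at most $D \sim \mathrm{diam}(M)$ with $|\int_\gamma\omega|$ bounded \emph{below} in a useful way. Starting from a point $p$ with $|\omega(p)|\geq 1/\sqrt{\vol(M)}$ and a gradient bound $\|\nabla\omega\|_\infty \leq C'$ (with $C'$ an absolute constant, not scaling with $\vol(M)$), the direction integral $\int_\sigma\omega$ only accumulates while $\omega(\dot\sigma(t))$ stays comparable to $|\omega(p)|$, i.e.\ for time $\ell_0 \lesssim |\omega(p)|/C' \sim 1/\sqrt{\vol(M)}$. Hence $|\int_\sigma\omega|\lesssim 1/\vol(M)$. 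But the closing path of length $\leq 2\,\mathrm{diam}(M)$ contributes a closing-up error of size up to $\|\omega\|_\infty \cdot 2\,\mathrm{diam}(M) \lesssim C\,\mathrm{diam}(M)$, which overwhelms the useful contribution by orders of magnitude. There is no cancellation argument in your sketch that would make the loop's total period large, so the inequality $|\int_\gamma\omega| \gtrsim \ell(\gamma)/\sqrt{\vol(M)}$ does not follow. The $2\pi V$ term does not arise from Gauss--Bonnet on bounding surfaces, and the closing-up error cannot be folded into $\frac{C}{2}\sqrt{\vol(M)}$.

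The paper avoids this by not attempting to locate a single distinguished loop. Instead it lifts $f$ to $\mathbb{H}^n$, defines the almost-primitive $b(q)=\int_{\alpha_{p,q}}f$ on a fundamental domain $F$, and writes $\|f\|_2^2 = \int_{\partial F} b\wedge *f + \int_F (f-db)\wedge *f$. The boundary integral decomposes over the finitely many face-pairings $(\Sigma_j,\gamma_j\Sigma_j)$, and the change in $b$ across each face pair is exactly a period integral $\int_{\alpha_{q_j,\gamma_j q_j}}f$ (up to geodesic-triangle corrections bounded by $\pi\|df\|_\infty$, which is where the $2\pi V$ or $3\pi V$ term comes from). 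This converts the \emph{full $L^2$-mass} into a weighted aggregate of the period integrals, weighted by $\int_{\Sigma_j}*f$, rather than relying on any single loop to carry all the information; the $\frac{C}{2}\sqrt{\vol(M)}$ term comes from Lemma \ref{almostprimitive1formhyperbolicspace}'s pointwise bound $\|db-f\|_\infty \leq \frac{1}{2}\|df\|_\infty$. The face-pairing elements $\gamma_j$ have translation length $\leq D$ by the diameter bound on the fundamental domain (\S\ref{geometrytwotypesfundamentaldomains}), which is what produces the constraint $\ell(\gamma)\leq D$ in the supremum. You would need to replace your pigeonhole-and-geodesic construction with something that similarly aggregates periods over an entire generating family to close the argument.
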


Let $\gamma\to [\gamma]$ denote the quotient map from $\pi_1(M)\to H_1(M,\mathbb{Q}).$
 We can extend the bounds of Theorem \ref{introupperbound} to the case $n = 3, b_1(M) = 1$:

\begin{theorem}[``Regulator-independent" Geometric Upper Bound for $\frac{1}{\lambda_1^1(M)_{d^\ast}}$ when $n = 3, b_1(M) = 1$] \label{introupperboundbettinumber1}
Let $M_0$ be a closed hyperbolic 3-manifold.  Let $M$ be an arbitrary finite cover of $M_0$ with $b_1(M) = 1.$  Fix $\delta > 0.$  Then 

\begin{align*}
 \frac{(1 - E)}{\sqrt{\lambda_1^1(M)_{d^\ast}}}  &\leq  C^2   \left( 3\pi V+2\sqrt{2} D^2 \cdot \vol(M)^{\delta + 1/2}  \sup_{\gamma \in \pi_1(M): [\gamma] = 0} \frac{\mathrm{sArea}(\gamma)}{\ell( \gamma )} + 5\pi  \right)  + \frac{C}{2}  \sqrt{\vol(M)}.  \nonumber \\
\end{align*}

The quantities $V, C, D$ are as in Theorem \ref{introupperbound}.  The quantity $E$ satisfies 
$$0\leq E \leq C \cdot \vol(M)^{-\delta}.$$
\end{theorem}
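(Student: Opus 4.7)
The plan is to adapt the construction from Theorem~\ref{introupperbound} to the $b_1(M) = 1$ setting, in which the single harmonic 1-form direction creates an obstruction that must be tolerated rather than resolved exactly. Let $\alpha \in d^*\Omega^2(M)$ be a unit-$L^2$ eigenform for $\lambda := \lambda_1^1(M)_{d^*}$; Proposition~\ref{supnormeigenfunction} yields $\|\alpha\|_\infty \leq C$. I aim to construct a 2-form $\beta$ satisfying $\|\alpha - d^*\beta\|_2 \leq E$ and with $\|\beta\|_2$ controlled by the geometric right-hand side of the theorem. Granted such $\beta$,
\[
1 - E \leq \langle \alpha, d^*\beta\rangle = \langle d\alpha, \beta\rangle \leq \sqrt{\lambda}\,\|\beta\|_2,
\]
yielding $(1-E)/\sqrt{\lambda} \leq \|\beta\|_2$, which is exactly the form of the stated inequality.

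In Theorem~\ref{introupperbound}, $\beta$ is assembled by a geometric recipe: for each point $x$ in a suitable net on $M$, take a short geodesic loop $\gamma_x$ through $x$, fill it by a 2-chain whose area is controlled by $\mathrm{sArea}(\gamma_x)$, and glue the resulting local primitives via a partition of unity. This requires every loop $\gamma_x$ to be rationally null-homologous. For $b_1(M) = 1$, fix a shortest closed geodesic $\gamma_0$ representing a generator of $H_1(M,\IZ)/\text{torsion}$ and let $h$ be the unit-$L^2$ harmonic 1-form, orthogonal to $\alpha$ by Hodge theory. When the construction encounters a loop $\gamma$ with $[\gamma] = m_\gamma[\gamma_0] \neq 0$, I would replace $\gamma$ by the null-homologous concatenation $\gamma' := \gamma \cdot \gamma_0^{-m_\gamma}$ and apply the filling estimate to $\gamma'$. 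This substitution is responsible for the restricted supremum $\sup_{[\gamma]=0} \mathrm{sArea}(\gamma)/\ell(\gamma)$ appearing in the final bound, and the additional length incurred by the correction yields the $D^2$ (rather than $D$) factor.

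The resulting $\beta$ only approximately satisfies $d^*\beta = \alpha$: the residual $\alpha - d^*\beta$ is coexact and concentrated along the one-dimensional harmonic obstruction arising from the partition-of-unity patching. Its $L^2$-norm is estimated by pairing against $h$ and then interpolating between the $L^2$- and $L^\infty$-norms of $h$ via H\"older's inequality with parameter $\delta > 0$; this interpolation produces both the $\vol(M)^{\delta + 1/2}$ factor in the main term on the right and the bound $E \leq C\vol(M)^{-\delta}$ on the left. The positive slack $\delta$ supplies the integrability margin needed for this Young-type trade-off.

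The principal obstacle is ensuring the final estimate does not depend on the period $\int_{\gamma_0} h$---the ``regulator''---which can become very small in congruence-tower covers. A direct Hodge-theoretic inversion $\beta = d\cdot \Delta^{-1}\alpha$ would be circular (it amounts to using $\lambda$ to bound $\lambda$) and in any case would force the regulator into the bound when projecting onto $\mathcal{H}^1$. The strategy above instead tolerates the approximate identity $d^*\beta = \alpha + O(\vol(M)^{-\delta})$ and packages the harmonic obstruction into the $(1-E)$ slack factor, so the geometric right-hand side remains regulator-free. Matching the precise coefficients $3\pi V$, $2\sqrt{2}D^2$, and $5\pi$ is then bookkeeping that proceeds along the same lines as in the proof of Theorem~\ref{introupperbound}, with the length bound on $\gamma'$ carefully tracked to avoid any implicit appearance of $\int_{\gamma_0} h$.
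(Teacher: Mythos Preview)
Your proposal has substantive gaps and mischaracterizes the paper's argument.

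First, your description of Theorem~\ref{introupperbound} is incorrect. The paper does not build a 2-form $\beta$ from a net of filled loops and a partition of unity. It defines a \emph{function} $b(q)=\int_{\alpha_{p,q}} f$ on a fundamental domain $F$ (Lemma~\ref{almostprimitive1formhyperbolicspace}), writes $\|f\|^2=\int_F db\wedge *f+\text{error}$, integrates by parts using $d^*f=0$, and reduces to period integrals $\int_{\alpha_{q_j,\gamma_j q_j}} f$ over the face-pairing elements $\gamma_j$ of $\partial F$ (Proposition~\ref{newalmostprimitive}, Corollary~\ref{lowerboundeigenvalue}). Your framework $1-E\le\langle d\alpha,\beta\rangle\le\sqrt\lambda\,\|\beta\|_2$ is not what is happening; the inequality in the paper arises directly from bounding those period integrals.

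Second, your proposed construction of $\beta$ is not coherent as written: filling a loop by a 2-chain produces a chain, not a smooth 2-form, and you give no mechanism by which such local fillings assemble into a $\beta$ with $d^*\beta\approx\alpha$. Moreover, you assert the residual $\alpha-d^*\beta$ is ``concentrated along the harmonic obstruction,'' but both $\alpha$ and $d^*\beta$ are coexact, hence orthogonal to harmonic forms---so this cannot be the source of $E$.

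Third, and most importantly, the regulator-independence mechanism in the paper is entirely absent from your sketch. The paper's proof (Proposition~\ref{regulatorindependentbounds}) runs a dichotomy: either every period over a homologically nontrivial $\gamma_j$ is already $\le\vol(M)^{-\delta}\|f\|_2$ (giving $E$ directly), or some $\gamma_{\mathrm{big\;period}}$ has a large period. In the latter case one subtracts a harmonic form $h$ matching that period, and the crucial point is that the damping factor $\|f\|_2/(\|f\|_2^2+\|h\|_2^2)^{1/2}$ is bounded by $m\vol(M)^\delta/A$, where $A=1/\mathrm{reg}_1(M)$. Simultaneously, the translation length of the null-homologous modification $\gamma_{\mathrm{big\;period}}^{-n_0}\gamma^m$ is bounded via intersection numbers with a minimal surface of area $\le A\sqrt{\vol(M)}$ (Propositions~\ref{boundingintersectionnumbers} and \eqref{leastareavsharmonic}). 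The $A$'s cancel, yielding the regulator-free bound. Your proposal contains no analogue of this cancellation, and your Hölder-interpolation remark does not substitute for it.
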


Notably, the upper bound in Theorem \ref{introupperboundbettinumber1} does not depend on $A',$ the maximum over a generating set $\{ [S] \}$ of $H_2(M,\mathbb{Z})$ of the least area representative of $[S].$  

When we don't impose restrictions on  $b_1(M),$ Theorem \ref{bodyupperboundpositivebettinumber} proves an upper bound for $\frac{1}{\lambda_1^1(M)_{d^\ast}}$ in terms of $\sup_{\gamma \in \pi_1(M): [\gamma] = 0} \frac{\mathrm{sArea}(\gamma)}{ \ell(\gamma)},$ which depends a priori on $A'.$  %We refer the reader to Theorem \ref{bodyupperboundpositivebettinumber} for a complete statement of what we can prove for general $n$ and general $b_1(M).$
 \bigskip

Let $K_0$ be a triangulation of $M_0.$  If $M$ is a finite cover, let $K$ denote the pullback triangulation of $K_0.$  The cochain complex $C^\bullet(M;K)$ maps into $\Omega^\bullet(M)$ by the Whitney map \cite{RS}.  Endow $C^\bullet(M;K)$  with the norm induced from the $L^2$-norm on $\Omega^\bullet(M)$ via the Whitney map.  Let $\lambda_{1,\mathrm{Whitney}}^q(M)$ denote the smallest positive eigenvalue of the associated Whitney Laplacian on $C^q(M;K).$  Let $\lambda_{1, \mathrm{Whitney}}^q(M)_{d^\ast}$ (resp. $\lambda_{1,\mathrm{Whitney}}^q(M)_d$) denote the smallest positive eigenvalue of the Whitney Laplacian acting on $d^{\ast}_{\mathrm{Whitney}} C^{q+1}(M;K)$ (resp. $d_{\mathrm{Whitney}} C^{q-1}(M;K)$).  Then
$$\lambda_{1,\mathrm{Whitney}}^q(M) = \min \{ \lambda_{1,\mathrm{Whitney}}^q(M)_d,\lambda_{1,\mathrm{Whitney}}^q(M)_{d^\ast}  \}$$
by the Hodge decomposition.

\begin{theorem}[Geometric Lower Bound for $\frac{1}{\lambda_{1,\mathrm{Whitney}}^1(M)_{d^\ast}}$] \label{introlowerbound}
Let $M_0$ be a closed hyperbolic $n$-manifold.  Let $M$ be an arbitrary finite cover of $M_0.$
%with first betti number 0.  
If some multiple of $\gamma \in \pi_1(M)$ bounds, then
$$\left( \frac{\mathrm{scl}(\gamma)}{\ell(\gamma)} \right)^2 \leq W_{M_0} \cdot \frac{\vol(M) \cdot \mathrm{diam}(M)^2}{\lambda_{1, \mathrm{Whitney}}^1(M)_{d^\ast}},$$
for some constant $W_{M_0}$ depending only on $M_0$ (described in Theorem \ref{whitneylambda1controlsscltriangulationindependent}).
\end{theorem}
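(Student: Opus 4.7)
The plan combines Bavard duality with a transfer of bounded cohomology to produce a test 1-cochain whose Rayleigh quotient upper-bounds $\lambda_{1,\mathrm{Whitney}}^1(M)_{d^*}$. Since $\gamma^m$ bounds, Bavard duality furnishes a homogeneous quasi-morphism $\phi:\pi_1(M)\to\mathbb{R}$, normalized to defect $D(\phi)=1$, with $|\phi(\gamma)|$ arbitrarily close to $2\,\mathrm{scl}(\gamma)$. To convert $\phi$ into a Whitney cochain, lift $K$ to a $\pi_1(M)$-equivariant triangulation $\tilde K$ of $\tilde M=\mathbb{H}^n$, fix a basepoint $q_0\in\tilde M$, define $\tilde F(g\cdot q_0):=\phi(g)$ on the $\pi_1(M)$-orbit, and extend piecewise linearly. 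The 1-cochain $d\tilde F$ is $\pi_1(M)$-invariant up to $O(D(\phi))$ per edge---an immediate consequence of the quasi-morphism identity applied at edge endpoints---so descending via any choice of lifts yields a 1-cochain $\xi\in C^1(M;K)$ satisfying $\|d\xi\|_{L^\infty(M)}\leq C_{M_0}$, and hence $\|d\xi\|^2_{L^2(M)}\leq C_{M_0}\vol(M)$. Telescoping $\phi$ along a cellular representative of the closed geodesic $\gamma$ yields $\int_\gamma\xi=\phi(\gamma)+O(D(\phi)\,\ell(\gamma))\approx 2\,\mathrm{scl}(\gamma)$.

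Next, Hodge-decompose $\xi=\xi_{\mathrm{cl}}+\xi_{\mathrm{coex}}$ into closed and coexact parts. Because $\gamma^m=\partial S$ forces $[\gamma]=0$ in $H_1(M;\mathbb{Q})$, both the exact and harmonic parts of $\xi$ integrate to zero over $\gamma$, so $\int_\gamma\xi_{\mathrm{coex}}=\int_\gamma\xi\gtrsim\mathrm{scl}(\gamma)$ (up to a subleading $O(\ell(\gamma))$ error which is negligible in the only nontrivial regime $\mathrm{scl}(\gamma)\gtrsim \ell(\gamma)$). Since $\xi_{\mathrm{coex}}\in d^*C^2(K)$, the min-max characterization of the coexact spectrum gives
$$\lambda_{1,\mathrm{Whitney}}^1(M)_{d^*}\;\leq\;\frac{\|d\xi\|^2_{L^2}}{\|\xi_{\mathrm{coex}}\|^2_{L^2}}.$$

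The final and main step is to produce a lower bound on $\|\xi_{\mathrm{coex}}\|^2_{L^2}$ from the codimension-$(n-1)$ trace constraint $\int_\gamma\xi_{\mathrm{coex}}\approx 2\,\mathrm{scl}(\gamma)$. Converting a line integral over a 1-dimensional loop into an $L^2$ lower bound in the $n$-dimensional ambient manifold is the technical heart of the argument; combining the sup-norm eigenform control of Proposition~\ref{supnormeigenfunction} with a Poincar\'e-type inequality for coexact 1-cochains on $M$ (which is what contributes the $\mathrm{diam}(M)^2$ factor) should yield an estimate of the form
$$\|\xi_{\mathrm{coex}}\|^2_{L^2}\;\geq\;c_{M_0}\cdot\frac{\mathrm{scl}(\gamma)^2}{\ell(\gamma)^2\cdot\mathrm{diam}(M)^2}.$$
Substituting this together with the bound $\|d\xi\|^2_{L^2}\leq C_{M_0}\vol(M)$ into the Rayleigh inequality and rearranging produces the claimed bound, with $W_{M_0}$ absorbing the accumulated $M_0$-dependent constants.
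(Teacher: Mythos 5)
Your route is genuinely different from the paper's. The paper (Proposition \ref{whitneylambda1controlsscl} together with Theorem \ref{whitneylambda1controlsscltriangulationindependent}) works entirely on the \emph{chain} side: starting from a cellular 1-cycle $\bar f$ representing $\gamma$, it solves $\partial g = \bar f$ with $g\perp\ker\partial$, uses the spectral gap to bound $\|g\|_{2,M}$ in terms of $\lambda_{1,\mathrm{Whitney}}^1(M)_{d^\ast}^{-1}\|\bar f\|_{2,M}$, rationalizes $g$ to $g'$, scales to an integral 2-chain $S_m = mg'$, and bounds $|\chi(S_m)|$ by the number of its 2-cells, i.e.\ $\|mg'\|_{1,\mathrm{comb}}$, which is compared to $\|mg'\|_{2,M}$ via $\|\cdot\|_{1,\mathrm{comb}}\lesssim_{M_0}\vol(M)^{1/2}\|\cdot\|_{2,M}$. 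You instead work on the dual, \emph{cochain} side: Bavard duality produces a quasi-morphism $\phi$ with $|\phi(\gamma)|\approx 2\,\mathrm{scl}(\gamma)$, you convert $\phi$ to a 1-cochain $\xi$ with $d\xi$ uniformly bounded, and you feed the coexact part of $\xi$ into the Rayleigh quotient. This connects the statement more directly to bounded cohomology and avoids the rationalization step, at the cost of needing (a) a careful construction of $\xi$ from $\phi$, and (b) a ``length-to-$L^2$'' bound at the end. Conceptually, $\mathrm{scl}$ is defined by a chain minimization, and Bavard duality is its Hahn--Banach dual; you and the paper are on the two sides of this duality.

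Two steps in your write-up are gaps, though both are fillable. First, ``extend piecewise linearly'' is not well-posed as written: $\pi_1(M)\cdot q_0$ is only one orbit of $0$-cells of $\widetilde K$, not the whole $0$-skeleton. The clean construction fixes a lift $\hat v$ of each vertex $v$ of $K$, writes each vertex of $\widetilde K$ uniquely as $g\hat v$ with $g\in\pi_1(M)$, sets $\widetilde F(g\hat v):=\phi(g)$, takes $\widetilde\xi:=d\widetilde F$, and descends by choosing a lift of each edge of $K$. The quasi-morphism inequality then gives both $\|d\xi\|_{\infty,\mathrm{comb}}=O_{M_0}(D(\phi))$ and $\int_{\alpha_{\mathrm{comb}}}\xi=\phi(\gamma)+O(D(\phi)\cdot\#\text{edges of }\alpha_{\mathrm{comb}})$, where, exactly as in the paper's Theorem \ref{whitneylambda1controlsscltriangulationindependent}, $\#\text{edges}=O_{M_0}(\ell(\gamma)+\mathrm{diam}(M))$. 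Second, your $L^2$ lower bound is not obtained from Proposition \ref{supnormeigenfunction}, which concerns smooth Laplace eigenforms and gives $L^\infty\lesssim L^2$ in that setting only; the inequality you actually need is the Whitney comparison of Proposition \ref{comparisonswithwhitneysupnorm}: $\|\xi_{\mathrm{coex}}\|_{\infty,M}\leq C_{M_0}\|\xi_{\mathrm{coex}}\|_{2,M}$ for arbitrary cochains. With that in hand, $\bigl|\int_{\alpha_{\mathrm{comb}}}W(\xi_{\mathrm{coex}})\bigr|\leq\|\xi_{\mathrm{coex}}\|_{\infty,M}\,\mathrm{length}(\alpha_{\mathrm{comb}})\leq C_{M_0}\|\xi_{\mathrm{coex}}\|_{2,M}\,\mathrm{length}(\alpha_{\mathrm{comb}})$, and since $\mathrm{length}(\alpha_{\mathrm{comb}})=O_{M_0}(\ell(\gamma)+\mathrm{diam}(M))=O_{M_0}(\ell(\gamma)\,\mathrm{diam}(M))$ because $\ell(\gamma)$ and $\mathrm{diam}(M)$ are bounded below by $M_0$-constants, you get exactly your stated lower bound for $\|\xi_{\mathrm{coex}}\|_{2,M}^2$; no Poincar\'e inequality is involved. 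Finally, you should dispose explicitly of the regime where the $O(D(\phi)(\ell(\gamma)+\mathrm{diam}(M)))$ error dominates $\mathrm{scl}(\gamma)$: there the claimed inequality follows trivially from $\lambda_{1,\mathrm{Whitney}}^1(M)_{d^\ast}=O_{M_0}(1)$, which holds because the combinatorial and Whitney $L^2$-norms are $M_0$-uniformly comparable for pulled-back triangulations.
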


\begin{theorem}[Comparison between $\lambda_{1, \mathrm{Whitney}}^1(M)_{d^\ast}$ and $\lambda_1^1(M)_{d^\ast}$] \label{introwhitneyderhamcomparison}
Let $M_0$ be a closed hyperbolic $n$-manifold.  Let $M$ be an arbitrary finite cover of $M_0.$  Then either
$$\lambda_{1,\mathrm{Whitney}}^1(M)_{d^\ast} \geq \frac{1}{4 G_{M_0}^2 C_{M_0}^2} \vol(M)^{-1}$$
or
$$\lambda_1^1(M)_{d^\ast} \leq 4G_{M_0}^2 \vol(M)\lambda_{1,\mathrm{Whitney}}^1(M)_{d^\ast}.$$
The constants $C_{M_0},G_{M_0}$ depend only on $M_0$ and are defined in Propositions \ref{comparisonswithwhitneysupnorm} and \ref{whitneyderhamcomparison} respectively.
\end{theorem}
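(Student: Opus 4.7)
The strategy is to feed a Whitney coexact eigencochain into the smooth Rayleigh quotient via the Whitney map, then run a dichotomy on the Hodge decomposition of its image.

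Step 1 (Setup). Let $\eta \in d^\ast_{\mathrm{Whitney}} C^2(M;K)$ be an eigencochain with $\Delta_W \eta = \lambda_W \eta$, where $\lambda_W := \lambda_{1,\mathrm{Whitney}}^1(M)_{d^\ast}$, and let $\omega := W\eta \in \Omega^1(M)$. Because the Whitney inner product is \emph{defined} as the pullback of the $L^2$ inner product under $W$, and $W$ commutes with $d$,
\[
\|\omega\|_{L^2}^2 = \|\eta\|_W^2, \qquad \|d\omega\|_{L^2}^2 = \|W d_W \eta\|^2 = \|d_W \eta\|_W^2 = \lambda_W \|\omega\|^2.
\]

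Step 2 (Hodge split and Rayleigh bound). Hodge-decompose $\omega = h + d\alpha + d^\ast\beta$ in $\Omega^1(M)$. Since $d\omega = d(d^\ast\beta)$, the coexact piece $d^\ast\beta$ is a legitimate test form for $\lambda_1^1(M)_{d^\ast}$, so
\[
\lambda_1^1(M)_{d^\ast}\,\|d^\ast\beta\|^2 \;\le\; \|d(d^\ast\beta)\|^2 \;=\; \|d\omega\|^2 \;=\; \lambda_W\,\|\omega\|^2.
\]
If $\|d^\ast\beta\|^2 \geq (4 G_{M_0}^2 \vol(M))^{-1}\|\omega\|^2$, we obtain immediately $\lambda_1^1(M)_{d^\ast} \leq 4 G_{M_0}^2 \vol(M)\,\lambda_W$, the second alternative of the theorem.

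Step 3 (The opposite case: closed part dominates). Otherwise, write $\omega_c := h + d\alpha$, so $\|\omega_c\|^2 \geq \bigl(1 - (4 G_{M_0}^2 \vol(M))^{-1}\bigr)\|\omega\|^2$. Apply the de Rham integration map $R\colon \Omega^\bullet \to C^\bullet$, $(R\phi)(\sigma) = \int_\sigma \phi$. Since $R$ is a chain map, $\xi := R\omega_c$ is a Whitney cocycle; since $R\circ W = \mathrm{id}$, we have $R\omega = \eta$ and therefore $\eta - \xi = R(d^\ast\beta)$. Whitney-coexactness of $\eta$ gives $\langle \eta,\xi\rangle_W = 0$, whence
\[
\|\omega\|^2 \;=\; \|\eta\|_W^2 \;\le\; \|\eta\|_W^2 + \|\xi\|_W^2 \;=\; \|\eta-\xi\|_W^2 \;=\; \|WR(d^\ast\beta)\|^2.
\]

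Step 4 (Operator estimate closes the dichotomy). Propositions \ref{comparisonswithwhitneysupnorm} and \ref{whitneyderhamcomparison} should combine to yield an $L^2$-to-$L^2$ bound for $WR$ on 1-forms of the shape $\|WR\phi\|^2 \leq 4 C_{M_0}^2 G_{M_0}^2 \vol(M)\,\|\phi\|^2$ (one $C_{M_0}$ from sup-norm-to-$L^2$ control of Whitney basis forms and one $G_{M_0}$ from Whitney--de Rham comparison, each squared in the final $L^2$ estimate). Plugging $\phi = d^\ast\beta$ into Step 3 and using $\|d^\ast\beta\|^2 < (4G_{M_0}^2\vol(M))^{-1}\|\omega\|^2$, the resulting inequality forces $\|\omega\|^2 \leq C_{M_0}^2 \|\omega\|^2$ modulo an extra factor of $4 G_{M_0}^2 C_{M_0}^2 \vol(M)\,\lambda_W$, which is exactly what rearranges to the first alternative $\lambda_W \geq (4 G_{M_0}^2 C_{M_0}^2 \vol(M))^{-1}$.

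The main obstacle is Step 4, namely isolating from the two cited propositions the precise $L^2$ operator bound for the composite $WR$ applied to a coexact 1-form, with the right combination of $C_{M_0}$, $G_{M_0}$, and $\vol(M)$ so that the dichotomy threshold matches the statement. Everything else is a clean Hodge-theoretic manipulation that leverages the isometric nature of the Whitney map and the chain-map properties of $W$ and $R$.
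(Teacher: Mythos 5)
Your Steps 1--3 are correct and run parallel to the paper's argument: both proofs Hodge-decompose $W(f)=z+\epsilon$ (your $\omega=\omega_c+d^\ast\beta$), both exploit that Whitney coexactness forces $\eta\perp\ker d_W$, and both reduce matters to bounding $\|W(f)\|_{2,M}$ by a quantity involving only the coexact part. (Your orthogonality argument via $R$ and the Pythagorean identity is a slightly different packaging of the paper's duality computation $\|W(f)\|_{2,M}=\sup_{\|\partial\sigma\|=1}|\int_{\partial\sigma}\epsilon|$, but they are equivalent.)

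Step 4 is a genuine gap, for two reasons. First, no $L^2$-to-$L^2$ operator bound of the form $\|WR\phi\|^2\le A\,\|\phi\|_{L^2}^2$ can hold for the de Rham integration map $R$: computing $R\phi$ requires restricting $\phi$ to $1$-cells (measure-zero sets), which an $L^2$ bound cannot control; you need a sup-norm bound, hence elliptic regularity. (Also notice that citing Proposition \ref{whitneyderhamcomparison} here is circular --- it is the statement you are trying to prove.) Second, even \emph{if} your claimed bound held, plugging $\|d^\ast\beta\|^2<(4G_{M_0}^2\vol(M))^{-1}\|\omega\|^2$ into $\|\omega\|^2\le\|WR(d^\ast\beta)\|^2\le 4C_{M_0}^2G_{M_0}^2\vol(M)\|d^\ast\beta\|^2$ yields only the vacuous $\|\omega\|^2\le C_{M_0}^2\|\omega\|^2$: nothing would force $\lambda_W$ to be large. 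The missing ingredient --- and the heart of the paper's proof --- is a Garding/Sobolev estimate for the elliptic operator $d+d^\ast$ applied to the coexact piece $\epsilon=d^\ast\beta$: since $d^\ast\epsilon=0$ and $d\epsilon=dW(f)$, one gets $\|\epsilon\|_{\infty,M}\le S_{B_0}\bigl(\|\epsilon\|_{2,M}+C_{M_0}\sqrt{\lambda_W}\,\|W(f)\|_{2,M}\bigr)$. This introduces the extra $\sqrt{\lambda_W}\|W(f)\|$ term that is precisely what produces the dichotomy: after feeding this into the boundary-chain estimate one obtains $\bigl(1-G_{M_0}C_{M_0}\vol(M)^{1/2}\sqrt{\lambda_W}\bigr)\|W(f)\|\le G_{M_0}\vol(M)^{1/2}\|\epsilon\|$, and whether the coefficient on the left is positive is exactly the split between the two alternatives. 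Without that $\sqrt{\lambda_W}$ term appearing on the right, your dichotomy cannot close.
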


The combinatorial-to-Riemannian comparison in Theorem \ref{introwhitneyderhamcomparison} shows that one of the following two alternatives must hold:
\begin{itemize}
\item[(1)]
$\lambda_{1,\mathrm{Whitney}}^1(M)_{d^\ast} \geq \frac{1}{4G_{M_0}^2 C_{M_0}^2} \vol(M)^{-1}.$  In this case, Theorem \ref{introlowerbound} shows that every $\gamma$ which is trivial in $H_1(M,\mathbb{Q})$ satisfies 
\begin{equation} \label{ifwhitneylambda1islarge}
\frac{\mathrm{scl}(\gamma)}{\ell(\gamma)} \leq 2 G_{M_0} C_{M_0} \cdot \sqrt{W_{M_0}} \vol(M) \cdot \mathrm{diam}(M).
\end{equation}

Inserting \eqref{ifwhitneylambda1islarge} into Theorems \ref{introupperbound} (and \ref{introupperboundbettinumber1}) implies that 
%if $b_1(M) = 0$ (or $n=3$ and $b_1(M) = 1$), then 
for every $\delta > 0,$ there is an upper bound

%\begin{equation} \label{reallygoodoutcome}
%\frac{1}{\sqrt{\lambda_1^1(M)_{d^\ast}}} \leq E_{M_0, \delta} \cdot \vol(M)^{1/2 + \delta},
%\end{equation}

\begin{equation} \label{reallygoodoutcome}
\frac{1}{\sqrt{\lambda_1^1(M)_{d^\ast}}} \leq \begin{cases}
E_{M_0} \cdot \vol(M) \cdot \mathrm{diam}(M)^2 &\text{ if } b_1(M) = 0 \\
E_{M_0,\delta} \cdot \vol(M)^{3/2 + \delta} \cdot \mathrm{diam}(M)^3 & \text{ if } n=3 \text{ and } b_1(M) = 1,
\end{cases}
\end{equation}

where $E_{M_0}, E_{M_0, \delta}$ depend only on $M_0, \delta.$  The inequality \eqref{reallygoodoutcome} yields applications to the growth of $H_1(M,\mathbb{Z})_{\mathrm{tors}}$ for towers of hyperbolic 3-manifolds; see \S \ref{motivation} for details.

\item[(2)]  
Suppose (1) does not hold.  Then Theorems \ref{introlowerbound} and \ref{introwhitneyderhamcomparison} imply that  
\begin{align} \label{lowerboundwhitneyupperboundderham}
\left( \frac{\mathrm{scl}(\gamma)}{\ell(\gamma)} \right)^2 & \leq 4 G_{M_0}^2 \cdot W_{M_0} \cdot \frac{\vol(M)^2 \cdot \mathrm{diam}(M)^2}{\lambda_1^1(M)_{d^\ast}}.\end{align}
% \nonumber\\
 Theorem \ref{introupperbound} yields an upper bound for the right hand side of \eqref{lowerboundwhitneyupperboundderham} which is quadratic in $\frac{\mathrm{sArea}(\gamma)}{\ell(\gamma)}$. 
Thus we have upper and lower bounds for $\frac{1}{\lambda_1^1(M)_{d^\ast}}$ which have the same order of magnitude, up to terms of polynomial size in $\vol(M).$  This implies: 

\begin{quote}
\emph{$\frac{1}{\lambda_1^1(M)_{d^\ast}}$ is at most polynomial in $\vol(M)$ if and only if every $\gamma \in \pi_1(M)$ with $[\gamma]=0$ has stable area at most $\ell(\gamma) \cdot (\text{polynomial in }\vol(M) ).$ }
\end{quote}   
\end{itemize}

Our main results comparing $\lambda_{1,\mathrm{Whitney}}^1(M)_{d^\ast}, \lambda_1^1(M)_{d^\ast},$ and stable commutator length also prove new relationships between the 1-form spectra of closed hyperbolic manifolds $M \subset N,$ where $M$ is geodesically embedded in $N.$

\begin{theorem} \label{surfacetheft}  
Let $N_0$ be a closed hyperbolic $n$-manifold, $n > 3.$  Let $M_0 \subset N_0$ be a totally geodesic submanifold.  Suppose $N \xrightarrow{\pi} N_0$ is an arbitrary finite cover.
%satisfying $b_1(N) = 0.$  
Let $M =$ (a connected component of) $\pi^{-1}(M_0).$  Suppose that there is a covering $p: N' \rightarrow N$ of degree $d$ for which
\begin{itemize}
\item
the submanifold $M$ lifts to $N'$

\item
$N'$ retracts onto $M.$
\end{itemize}
Then 
$$\frac{1}{\lambda_1^1(M)_{d^\ast}} \leq F_{M_0} \cdot \mathrm{diam}(M)^2 \cdot \left( d^2 \cdot \vol(N) \cdot \mathrm{diam}(N) \right)^2 \cdot \frac{1}{\lambda_1^1(N')_{d^\ast}}$$
for a constant $F_{M_0}$ depending only on $M_0$ (in a manner to be described explicitly later).
\end{theorem}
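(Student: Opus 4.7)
The plan is to transfer 1-form spectral control from $N'$ to $M$ by routing through stable commutator length, using the retraction $r : N' \to M$ in an essential way. Schematically, the chain of implications is
\[
\lambda_1^1(N')_{d^\ast} \ \overset{\ref{introwhitneyderhamcomparison}}{\rightsquigarrow} \ \lambda_{1,\mathrm{Whitney}}^1(N')_{d^\ast} \ \overset{\ref{introlowerbound}}{\rightsquigarrow}\ \mathrm{scl}_{N'} \ \overset{r}{=}\ \mathrm{scl}_M \ \geq \frac{1}{4\pi}\mathrm{sArea}_M \ \overset{\ref{bodyupperboundpositivebettinumber}}{\rightsquigarrow}\ \lambda_1^1(M)_{d^\ast}.
\]

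The linchpin is the equality $\mathrm{scl}_M(\gamma) = \mathrm{scl}_{N'}(\gamma)$ for any $\gamma \in \pi_1(M)$. The inclusion $i : M \hookrightarrow N'$ gives $\mathrm{scl}_{N'}(\gamma) \leq \mathrm{scl}_M(\gamma)$ trivially. Conversely, for any surface $S \to N'$ with $\partial S = m\gamma$, the composition $r \circ (S \to N') : S \to M$ has the same Euler characteristic and the same boundary, since $r \circ i = \mathrm{id}_M$ and $\gamma \subset M$; this yields $\mathrm{scl}_M(\gamma) \leq \mathrm{scl}_{N'}(\gamma)$. The same observation shows that $\gamma$ is null-homologous in $M$ iff it is null-homologous in $N'$.

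Now fix $\gamma \in \pi_1(M)$ with $[\gamma] = 0$ in $H_1(M, \mathbb{Q})$. Applying Theorem \ref{introlowerbound} to $N'$ (with respect to $\gamma$, which is also null-homologous in $N'$) and combining with Theorem \ref{introwhitneyderhamcomparison} applied to $N'$---which yields $\lambda_{1,\mathrm{Whitney}}^1(N')_{d^\ast} \geq \lambda_1^1(N')_{d^\ast}/(4 G_{N_0}^2 \vol(N'))$ in the nontrivial branch---produces
\[
\frac{\mathrm{scl}_{N'}(\gamma)}{\ell(\gamma)} \ \lesssim_{N_0}\ \frac{\vol(N') \cdot \mathrm{diam}(N')}{\sqrt{\lambda_1^1(N')_{d^\ast}}}.
\]
Combining with $\mathrm{sArea}_M \leq 4\pi \mathrm{scl}_M = 4\pi \mathrm{scl}_{N'}$ and then inserting this into the upper bound Theorem \ref{bodyupperboundpositivebettinumber} applied to $M$---which dominates $1/\sqrt{\lambda_1^1(M)_{d^\ast}}$ by $\mathrm{diam}(M) \cdot \sup_{[\gamma] = 0} \mathrm{sArea}_M(\gamma)/\ell(\gamma)$ up to constants depending on $M_0$---squaring and substituting $\vol(N') \leq d \cdot \vol(N)$ together with the standard path-lifting estimate $\mathrm{diam}(N') \leq (2d-1) \cdot \mathrm{diam}(N)$ (so that $\vol(N') \cdot \mathrm{diam}(N') \leq d^2 \vol(N) \mathrm{diam}(N)$ up to an absolute constant), yields the claimed inequality.

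The main obstacle will be handling the dichotomy in Theorem \ref{introwhitneyderhamcomparison}: in the branch where $\lambda_{1,\mathrm{Whitney}}^1(N')_{d^\ast}$ is bounded absolutely below (rather than by $\lambda_1^1(N')_{d^\ast}/\vol(N')$), the resulting scl estimate is absolute rather than proportional to $1/\sqrt{\lambda_1^1(N')_{d^\ast}}$. One must verify that this branch forces $\lambda_1^1(N')_{d^\ast}$ to be bounded above in $N_0$-dependent terms, so that $1/\lambda_1^1(N')_{d^\ast}$ is in turn bounded below and the target inequality follows with a suitable $F_{M_0}$. A secondary concern is the potential dependence of Theorem \ref{bodyupperboundpositivebettinumber} on the surface-area constant $A'$ when $b_1(M) > 0$; since $M$ is totally geodesic in the hyperbolic manifold $N$, one should exhibit a uniform area bound for generators of $H_2(M, \mathbb{Z})$ in terms of $M_0$, or appeal to a version of the upper bound insensitive to $A'$, before absorbing the contribution into $F_{M_0}$.
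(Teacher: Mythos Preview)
Your argument is essentially the paper's own: Proposition \ref{stealingboundsfromhighdimension} together with the final Corollary of \S\ref{applications} runs exactly your chain $\lambda_1^1(N')_{d^\ast}\rightsquigarrow \lambda_{1,\mathrm{Whitney}}^1(N')_{d^\ast}\rightsquigarrow \mathrm{scl}_{N'}\geq \mathrm{scl}_M\rightsquigarrow \mathrm{sArea}_M\rightsquigarrow \lambda_1^1(M)_{d^\ast}$, with the retraction used precisely to push surfaces from $N'$ down to $M$ (the paper records only the inequality $\mathrm{scl}_M\leq\mathrm{scl}_{N'}$, which is all that is needed). Your two flagged concerns are exactly the loose ends in the paper as well: the dichotomy in Proposition \ref{whitneyderhamcomparison} is not explicitly discharged there (your observation that $\lambda_1^1(N')_{d^\ast}\leq\lambda_1^1(N_0)_{d^\ast}$ by pullback handles it), and the $b_1(M)>0$ issue is not resolved---the paper's actual proof, the Corollary following Proposition \ref{stealingboundsfromhighdimension}, simply adds the hypothesis $b_1(M)=0$ and invokes Corollary \ref{lowerboundeigenvalue} rather than the general Theorem \ref{bodyupperboundpositivebettinumber}.
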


A rich family of (arithmetic) examples satisfying the hypotheses of Theorem \ref{surfacetheft} is provided by the work of Bergeron, Haglund, and Wise \cite{BHW}.  The 1-form spectra of hyperbolic $n$-manifolds, $n > 3,$ are typically much easier to bound away from 0 than the 1-form spectrum of hyperbolic $3$-manifolds because $\lambda_1^1(\mathbb{H}^n) > 0$ if $n > 3$ while $\lambda_1^1(\mathbb{H}^3) = 0$; see \S \ref{1formshighdimension} for further discussion.  Theorem \ref{surfacetheft} may therefore be useful for proving good lower bounds for the 1-form spectra of closed hyperbolic 3-manifolds.  \medskip

This work began as an attempt to prove that $\frac{1}{\lambda_1^1(M)} \ll_{M_0} \vol(M)^{C}$, for some constant $C.$  This bound is motivated by applications to estimating growth of $H_1(M, \mathbb{Z})_{\mathrm{tors}},$ as we'll describe in \S \ref{motivation}.  Focusing attention on $\lambda_1^1(M)_{d^\ast}$ may appear to miss ``half the story": bounding $\lambda_1^1(M)_d = \lambda_1^0(M)$ from below.  But $\lambda_1^1(M)_d$ is much simpler to control; see \S \ref{alternatelowerboundlambda10} for further discussion.

%\begin{theorem} \label{fewsmall0formeigenvalues}
%Let $M_0$ be a closed Riemannian manifold.  Let $M \rightarrow M_0$ be an arbitrary finite cover.  Then\footnote{There are many natural expander families of coverings $M_n \rightarrow M_0,$ e.g. if $M_n$ is arithmetic congruence locally symmetric space.  For such families, the stronger bound $\lambda_1^0(M_n) \gg_{M_0} 1$ holds by definition.}
%$$\lambda_1^1(M)_d = \lambda_1^0(M) \gg_{M_0} \vol(M)^{-1}.$$
%\end{theorem}   

\subsection{Motivation: Relationship to torsion cohomology growth} \label{motivation}
This paper began as an attempt to prove growth of torsion in $H_1(M,\mathbb{Z})$ for towers of closed hyperbolic 3-manifolds $M.$  

For every closed Riemannian manifold $M,$ the Cheeger-M\"{u}ller Theorem \cite{Cheeger1} \cite{Muller} relates torsion cohomology to analytic invariants of Riemannian manifolds:

\begin{equation} \label{cheegermuller}
\sum_{q = 0}^{\dim M} (-1)^q \left( \log |H^q(M,\mathbb{Z})_{\mathrm{tors}}| + \log( R^{\dim M - q}(M)) + \frac{q}{2} \sum_{\lambda \in \text{spectrum}(\Delta_q)\setminus\{0\}}^{\mathrm{reg}}\log \left( \frac{1}{\lambda}  \right) \right) = 0.
\end{equation}

The regulator $R^q(M)$ measures the volume of $H^q(M,\mathbb{Z}) \backslash H^q(M,\mathbb{R})$ with $L^2$-metric induced from harmonic forms.  In particular, $\log R^q(M) = 0$ if $H^q(M,\mathbb{R}) = 0.$  The notation $\sum^{\mathrm{reg}}$ means zeta-regularized sum.  

Under favorable circumstances, one hopes that for many sequences of hyperbolic 3-manifolds $M$ ``geometrically converging to $\mathbb{H}^3,$"
\begin{align*}
\frac{1}{\vol(M)} \sum_{q = 0}^{\dim M} (-1)^q \frac{q}{2} \sum_{\lambda \in \text{spectrum}(\Delta_q)\setminus\{0\}}^{\mathrm{reg}}\log \left( \frac{1}{\lambda} \right) &=: \frac{1}{\vol(M)} \cdot \log T_{\mathrm{an}}(M) \\
&\to \log T_{\mathrm{an}}^{(2)}(\mathbb{H}^3) = -\frac{1}{6\pi},
\end{align*}
where $T_{\mathrm{an}}^{(2)}$ is the $L^2$-analytic torsion of $\mathbb{H}^3$ \cite[\S 3]{Luck}.  Nonetheless, convergence of analytic torsion to its expected $L^2$-limit has not been proven for even a single sequence of closed hyperbolic 3-manifolds converging geometrically to $\mathbb{H}^3.$  

Equation \eqref{cheegermuller} shows that small $q$-form Laplacian eigenvalues and large $R^{\dim M - q}(M)$\footnote{Large $R^{\dim M - q}(M)$ is corresponds to homology classes in $H_q(M,\mathbb{Z})$ whose minimal complexity is very large.  See \cite{BSV} for further details.} suppress torsion cohomology in degree $q.$  
%({\bf 'Suppress' in what sense?})

Bergeron and Venkatesh \cite{BV} found many interesting examples of non-trivial unimodular metrized local systems $L$ of free abelian groups for which $R^{\dim M - q}(M; L) = 1$ and without small eigenvalues\footnote{Bergeron and Venkatesh construct \emph{strongly acyclic metrized local systems}, for which the $q$-form Laplace operators for a tower of hyperbolic 3-manifolds admit a uniform spectral gap for every $q.$  See \cite[\S 4]{BV} for details and \cite[\S 8]{BV} for constructions.}.  In the absence of these two torsion suppressors, Bergeron and Venkatesh prove a limit multiplicity formula showing that the analytic torsion $\frac{\log T_{\mathrm{an}}(M, L)}{\vol(M)}$ approaches its expected $L^2$-limit.  Upon applying M\"{u}ller's generalization of the Cheeger-M\"{u}ller theorem to metrized unimodular local systems $L$ \cite{Muller1}, this proves growth of torsion in the cohomology $H^{\ast}(M, L).$

For the trivial local system $\mathbb{Z}$ and many others, the analytic obstructions of small eigenvalues and complicated cycles alluded to above are genuine and present interesting geometric problems.
 
Bergeron, Venkatesh, and Seng\"{u}n \cite[Theorem 1.2]{BSV} have codified the obstruction to proving the torsion cohomology growth theorems via the methods of \cite{BV}:   

\begin{theorem}[\cite{BSV}, Theorem 1.2] \label{sufficientconditionstorsiongrowth}
Let $M_0$ be a closed hyperbolic 3-manifold and $M_n \rightarrow M_0$ normal coverings for which $\bigcap \pi_1(M_n) = \{ 1\}.$  Suppose that 
\begin{itemize}
\item
$M_n$ has ``small betti numbers", i.e.
\begin{equation} \label{smallbettinumbers}
b_1(M_n) = o \left( \frac{\vol(M_n)}{\log \vol(M_n)} \right).
\end{equation}

\item
$M_n$ has ``few small 1-form eigenvalues"\footnote{\cite[Theorem 1.2]{BSV} makes the further assumption that $M_n$ be arithmetic congruence.  Under this assumption, the 0-form Laplacian admits a uniform spectral gap and the analogous condition for ``few small 0-form eigenvalues" is automatically satisfied.     However, Lemma \ref{alternatelowerboundlambda10}  together with know bounds on almost-betti numbers in degree 0 show that ``few small 0-form eigenvalues" is satisfied even without assuming every $M_n$ is arithmetic congruence.}, i.e. 
\begin{equation} \label{fewsmalleigenvalues}
\lim_{n \to \infty} \frac{1}{\vol(M_n)} \sum_{\lambda \in \text{spectrum}(\Delta_1)\cap (0,\vol(M_n)^{-\delta})} \log \left( \frac{1}{\lambda} \right) = 0 \text{ for every } \delta > 0.
\end{equation}

\item
$M_n$ has ``simple cycles", i.e.
\begin{equation} \label{simplecycles}
H_2(M_n,\mathbb{R}) \text{ is spanned by cycles of area } \ll \vol(M_n)^C \text{ for some constant } C.
\end{equation}
\end{itemize}
Then it follows that
\begin{equation} \label{smallregulator}
\lim_{n \to \infty} \frac{\log R^q(M_n)}{\vol(M_n)} = 0
\end{equation}

\begin{equation} \label{analytictorsionlimitmultiplicity}
\lim_{n \to \infty} \frac{- \log |H_1(M_n,\mathbb{Z})_{\mathrm{tors}}|}{\vol(M_n)} = \lim_{n \to \infty} \frac{\log T_{\mathrm{an}}(M_n)}{\vol(M_n)} = -\frac{1}{6 \pi}.
\end{equation}
\end{theorem}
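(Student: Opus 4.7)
The plan is to extract the torsion-growth limit \eqref{analytictorsionlimitmultiplicity} from the Cheeger--M\"uller identity \eqref{cheegermuller} by systematically controlling every other term. In dimension $3$ with trivial coefficients, Poincar\'e duality and the universal coefficient theorem give $H^1(M,\mathbb{Z})_{\mathrm{tors}} = 0 = H^3(M,\mathbb{Z})_{\mathrm{tors}}$ and $H^2(M,\mathbb{Z})_{\mathrm{tors}} \cong H_1(M,\mathbb{Z})_{\mathrm{tors}}$, while the Hodge $\ast$-operator intertwines $\Delta_q$ and $\Delta_{3-q}$ so their positive spectra agree. Combining these with the trivial identifications $R^0(M) = \vol(M)^{1/2}$, $R^3(M) = \vol(M)^{-1/2}$, equation \eqref{cheegermuller} collapses to
\[
\log |H_1(M_n,\mathbb{Z})_{\mathrm{tors}}| \;=\; -\log T_{\mathrm{an}}(M_n) + \log R^2(M_n) - \log R^1(M_n) + O(\log \vol(M_n)).
\]
Dividing by $\vol(M_n)$, it suffices to prove the regulator vanishing \eqref{smallregulator} for $q \in \{1,2\}$ and the analytic-torsion convergence $\log T_{\mathrm{an}}(M_n)/\vol(M_n) \to -1/(6\pi)$.

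For the regulators I would combine hypothesis \eqref{simplecycles} --- which supplies a basis of $H_2(M_n,\mathbb{R})$ represented by surfaces of area polynomially bounded in $\vol(M_n)$ --- with standard elliptic estimates on hyperbolic manifolds to control the $L^2$-norms of an integral basis $\{\alpha_i\}$ of $H^1(M_n,\mathbb{Z})/\mathrm{tors}$: the harmonic representative dual to a low-area cycle has polynomially bounded $L^2$-norm (via $\|\alpha\|_2^2 \le \|\alpha\|_\infty \cdot \mathrm{area}(S)$ together with the sup-to-$L^2$ comparison for harmonic forms on hyperbolic manifolds with injectivity radius bounded below). This yields $|\log R^1(M_n)| \ll b_1(M_n) \log \vol(M_n) = o(\vol(M_n))$ by \eqref{smallbettinumbers}; unimodularity of the Poincar\'e pairing on the free parts of $H^1$ and $H^2$ then transfers the same estimate to $R^2(M_n)$, giving \eqref{smallregulator}.

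For the analytic torsion I would split every positive spectral sum at the threshold $\lambda = \vol(M_n)^{-\delta}$. The small-eigenvalue contribution is $o(\vol(M_n))$: hypothesis \eqref{fewsmalleigenvalues} handles $\Delta_1$ directly, Hodge duality transfers the bound to $\Delta_2$, and the $0$-form case is handled as the footnote indicates using Lemma \ref{alternatelowerboundlambda10} together with Buser-type control on the count of small $\Delta_0$-eigenvalues (itself relying on \eqref{smallbettinumbers}). For the large-eigenvalue part I would use the heat-kernel representation $\zeta'_q(0) = -\int_0^\infty \mathrm{Tr}(e^{-t\Delta_q}|_{(\ker)^\perp})\,\frac{dt}{t}$ (regularised at $0$), split the $t$-integral at $t = \vol(M_n)^{\delta'}$, and argue: for small $t$ the Benjamini--Schramm convergence $M_n \to \mathbb{H}^3$ --- guaranteed by normality of the tower together with $\bigcap_n \pi_1(M_n) = \{1\}$ --- lets the normalised heat trace converge to the $L^2$-density on $\mathbb{H}^3$; for large $t$ the spectral cutoff $\vol(M_n)^{-\delta}$ forces the necessary exponential decay. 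Assembling these with the signs from $\log T_{\mathrm{an}}(M_n) = \tfrac{1}{2}\zeta'_1(0) - \tfrac{3}{2}\zeta'_0(0)$ produces the $L^2$-analytic torsion density $-1/(6\pi)$ of $\mathbb{H}^3$.

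The principal obstacle is the small-eigenvalue contribution for $\Delta_1$: a single cluster of very small $1$-form eigenvalues could make $\sum_{0<\lambda\le\vol(M_n)^{-\delta}}\log(1/\lambda)$ of order $\vol(M_n)$ and drown out the torsion term, so hypothesis \eqref{fewsmalleigenvalues} is indispensable and cannot be deduced from \eqref{smallbettinumbers} and \eqref{simplecycles} alone. Verifying \eqref{fewsmalleigenvalues} for concrete towers of closed hyperbolic $3$-manifolds is precisely the motivation for the geometric upper bounds on $1/\lambda_1^1(M)_{d^\ast}$ in Theorems \ref{introupperbound} and \ref{introupperboundbettinumber1}, and --- via the combinatorial-to-Riemannian comparison of Theorem \ref{introwhitneyderhamcomparison} --- for the stable-commutator-length lower bound in Theorem \ref{introlowerbound}.
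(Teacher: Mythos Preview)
The paper does not prove this theorem; it is quoted verbatim from \cite{BSV} as background and motivation, so there is no argument in the present paper to compare against.  That said, your outline is a faithful reconstruction of the strategy in \cite{BSV}: the collapse of \eqref{cheegermuller} in dimension $3$ via Poincar\'e/Hodge duality, the regulator bound combining the simple--cycles hypothesis with a sup--to--$L^2$ estimate for harmonic forms, and the heat--kernel splitting with Benjamini--Schramm convergence (normality plus $\bigcap_n \pi_1(M_n)=\{1\}$) for the analytic--torsion limit are exactly the ingredients used there.

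Two minor remarks.  First, the inequality $\|\alpha\|_2^2 \le \|\alpha\|_\infty \cdot \mathrm{area}(S)$ that you invoke for the harmonic $1$--form Poincar\'e dual to $[S]$ is correct, but it passes through the calibration identity $\|\alpha\|_{L^1} \le \mathrm{area}(S)$ of Brock--Dunfield \cite[Lemma~3.1]{BD2}, which is precisely how the present paper uses it in \eqref{leastareavsharmonic}; you should make that step explicit.  Second, bounding $|\log R^1|$ from \emph{both} sides requires, in addition to the upper bound you sketch, a lower bound on the Gram determinant; in \cite{BSV} this comes from the dual observation that $H_1(M_n,\mathbb{Z})/\mathrm{tors}$ is always generated by loops of length $O(\mathrm{diam}(M_n))$, giving a polynomial upper bound on $R^2$ directly, after which unimodularity of the cup pairing on $H^1_{\mathrm{free}}\times H^2_{\mathrm{free}}$ ties the two together.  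These are refinements of presentation rather than gaps in the strategy.
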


The main focus of \cite{BSV} was on understanding the simple cycles condition \eqref{simplecycles}.  Throughout the present paper, we focus on the small eigenvalue condition \eqref{fewsmalleigenvalues}.\footnote{We do not believe the simple cycle condition \eqref{simplecycles} and the small eigenvalue condition \eqref{fewsmalleigenvalues} should be regarded as independent.  We will return to the connection between regulators and small eigenvalues in future work.} 

Under the simplifying assumption $b_1(M_n) = 0,$ we state different set of sufficient conditions that emphasizes the connection between small eigenvalues and geometry; this is a consequence of Theorem \ref{introupperbound}.

\begin{theorem} \label{newsufficientconditionstorsiongrowth}
Let $M_0$ be a closed hyperbolic 3-manifold and $M_n \rightarrow M_0$ normal coverings for which $\bigcap \pi_1(M_n) = \{ 1\}$ and which satisfy $b_1(M_n) = 0.$  Suppose that 
\begin{itemize}
\item
$M_n$ has ``small almost-betti numbers", i.e.
\begin{equation} \label{smallalmostbettinumbers}
\sum_{\lambda \in \text{spectrum}(\Delta_1)\cap [0,\vol(M_n)^{-\delta})} 1 = o \left( \frac{\vol(M_n)}{\log \vol(M_n)} \right) \text{ for all } \delta > 0.
\end{equation}

\item
$M_n$ has ``simple almost-cycles", i.e. for some constant $D_{M_0}$ depending only on $M_0,$
\begin{equation} \label{simplealmostcycles} 
\text{If } \ell(\gamma) \leq D_{M_0} \cdot \mathrm{diam}(M_n), \text{ then } sArea(\gamma) \leq \vol(M_n)^C \text{ for some constant } C.
\end{equation}

\end{itemize}

Then it follows that

\begin{equation} \label{analytictorsionlimitmultiplicity}
\lim_{n \to \infty} \frac{- \log |H_1(M_n,\mathbb{Z})_{\mathrm{tors}}|}{\vol(M_n)} = \lim_{n \to \infty} \frac{\log T_{\mathrm{an}}(M_n)}{\vol(M_n)} = -\frac{1}{6 \pi}.
\end{equation}
\end{theorem}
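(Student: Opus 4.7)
The plan is to verify the three hypotheses of Theorem \ref{sufficientconditionstorsiongrowth}. Since $b_1(M_n) = 0$ by assumption, the ``small betti numbers'' condition \eqref{smallbettinumbers} holds trivially, and Poincar\'e duality on the closed orientable 3-manifold $M_n$ gives $b_2(M_n) = 0$, making the ``simple cycles'' condition \eqref{simplecycles} vacuous. Only the ``few small 1-form eigenvalues'' condition \eqref{fewsmalleigenvalues} requires genuine work, and I would verify it via a Hodge-decomposition argument: since there are no harmonic 1-forms, every positive 1-form eigenvalue appears either on $d\Omega^0$ (where it coincides with a positive 0-form eigenvalue of $\Delta_0$) or on $d^\ast \Omega^2$. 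The $d\Omega^0$ contribution is controlled by Lemma \ref{alternatelowerboundlambda10} combined with hypothesis \eqref{smallalmostbettinumbers} applied to 0-form eigenvalues (which inject into the 1-form spectrum, so the count bound transfers), exactly as indicated in the footnote accompanying Theorem \ref{sufficientconditionstorsiongrowth}.

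The main step is to control the $d^\ast \Omega^2$ contribution using Theorem \ref{introupperbound}. Since $M_n$ covers $M_0$, its injectivity radius is bounded below by $\mathrm{sys}(M_0)/2$; assuming $\lambda_1^1(M_n)$ is bounded above (the only case of interest), the constant $C$ in Theorem \ref{introupperbound} is uniformly bounded in $n$. The simple almost-cycles hypothesis \eqref{simplealmostcycles}, combined with the systole lower bound $\ell(\gamma) \geq \mathrm{sys}(M_0)$, yields
\[ \sup_{\gamma \in \pi_1(M_n),\, \ell(\gamma) \leq D_{M_0} \mathrm{diam}(M_n)} \frac{\mathrm{sArea}(\gamma)}{\ell(\gamma)} \leq \frac{\vol(M_n)^C}{\mathrm{sys}(M_0)}. \]
Substituting this and the bounds $V \leq V_{M_0} \vol(M_n)$, $D \leq D_{M_0} \mathrm{diam}(M_n)$ into Theorem \ref{introupperbound}, together with a polynomial bound on $\mathrm{diam}(M_n)$ in terms of $\vol(M_n)$ (obtained from a standard ball-packing argument using the uniform injectivity radius lower bound), produces a polynomial lower bound
\[ \lambda_1^1(M_n)_{d^\ast} \geq \vol(M_n)^{-C'} \]
for some constant $C'$ depending only on $M_0$.

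Consequently every eigenvalue of $\Delta_1$ on $d^\ast \Omega^2$ is at least $\vol(M_n)^{-C'}$, so each contributes at most $C' \log \vol(M_n)$ to $\sum_{\lambda \in (0, \vol(M_n)^{-\delta})} \log(1/\lambda)$. Since \eqref{smallalmostbettinumbers} bounds the number of such small eigenvalues by $o(\vol(M_n)/\log \vol(M_n))$, the total sum from the $d^\ast \Omega^2$ part is $o(\vol(M_n))$; combined with the 0-form estimate this establishes \eqref{fewsmalleigenvalues}. All three hypotheses of Theorem \ref{sufficientconditionstorsiongrowth} are then satisfied, yielding the claimed torsion and analytic torsion limits. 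The main obstacle I would anticipate is the polynomial diameter bound for $M_n$: while a ball-packing argument using a uniform injectivity radius lower bound should suffice, translating this into an explicit polynomial control on $\mathrm{diam}(M_n)$ valid across the entire tower deserves careful attention, especially for ``thin'' covers whose diameters may grow faster than naive estimates suggest.
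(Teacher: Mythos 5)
Your proof is correct and fills in precisely the argument the paper gestures at (the paper only says the theorem ``is a consequence of Theorem~\ref{introupperbound}'' without writing out the verification of the hypotheses of Theorem~\ref{sufficientconditionstorsiongrowth}): hypotheses~\eqref{smallbettinumbers} and~\eqref{simplecycles} are vacuous when $b_1(M_n)=0$, the $d\Omega^0$ part of the 1-form spectrum is handled by Lemma~\ref{lemmaalternatelowerboundlambda10} and the injection of the positive $0$-form spectrum into the $1$-form spectrum, and the $d^\ast\Omega^2$ part is handled by substituting~\eqref{simplealmostcycles} into Theorem~\ref{introupperbound} to get a polynomial lower bound on $\lambda_1^1(M_n)_{d^\ast}$, after which~\eqref{smallalmostbettinumbers} gives~\eqref{fewsmalleigenvalues}. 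The one concern you raise at the end is unfounded: since $M_n \to M_0$ is a covering, $\mathrm{inj}(M_n) \geq \mathrm{inj}(M_0) =: r_0$ uniformly, and placing points spaced $r_0$ apart along a minimizing geodesic realizing the diameter and noting their $r_0/2$-balls are disjoint and each has hyperbolic volume bounded below gives $\mathrm{diam}(M_n) \leq r_0\bigl(1 + \vol(M_n)/\vol(B_{r_0/2}^{\mathbb{H}^3})\bigr)$, which is linear in $\vol(M_n)$ with a constant depending only on $M_0$ and valid across the entire tower regardless of how ``thin'' the covers are.
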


The simple almost-cycle condition \eqref{simplealmostcycles} in Theorem \ref{newsufficientconditionstorsiongrowth} replaces the small eigenvalue condition \eqref{fewsmalleigenvalues} from Theorem \ref{sufficientconditionstorsiongrowth}.  Additionally, the simple almost-cycle condition \eqref{simplealmostcycles} from Theorem \ref{newsufficientconditionstorsiongrowth} distinctly resembles the simple cycle condition from Theorem \eqref{sufficientconditionstorsiongrowth}. 

\begin{remark}
Let $M_0$ be a closed hyperbolic 3-manifold and $M \rightarrow M_0$ a finite cover.  Make the same assumptions as in the statement of Theorem \ref{newsufficientconditionstorsiongrowth}.  The simple almost-cycle condition \eqref{simplealmostcycles} implies that 
\begin{equation} \label{polynomialspectralgap}
\frac{1}{\lambda_1^1(M)} = O_{M_0} \left( \vol(M)^C \right).
\end{equation} 
\eqref{polynomialspectralgap} is not implied by the small eigenvalue condition \eqref{fewsmalleigenvalues}.  However, given the best progress to date on the small almost-betti number problem \cite{sarnaklettertorudnick}, it is difficult to imagine proving \eqref{fewsmalleigenvalues} without also proving a spectral gap of quality similar to \eqref{polynomialspectralgap}.  Unfortunately, 
\begin{equation} \label{reallybadspectralgap}
\text{multiplicity of the 1-form eigenvalue } \lambda_1^1(M) \cdot \log \left( \frac{1}{\lambda_1^1(M)} \right) = O_{M_0} \left( \vol(M) \right)
\end{equation}
gives the best currently known lower bound for $\lambda_1^1(M).$  The gulf between \eqref{polynomialspectralgap} and \eqref{reallybadspectralgap} 
% the exponential lower bound from Proposition \ref{exponentialspectralgap} 
is enormous.  We will revisit the issue of deriving improved lower bounds for $\lambda_1^1(M),$ or equivalently constructing simple almost cycles, in \S \ref{1formshighdimension} and in future work.
\end{remark}

\subsection{Outline}
Let $M_0$ be a closed hyperbolic manifold and $M \rightarrow M_0$ an arbitrary finite cover satisfying $b_1(M) = 0.$  Let $K_0$ be a triangulation of $M_0$ and $K$ the pullback triangulation of $M.$

\begin{itemize}
\item
In \S \ref{sobolevestimate}, we recall standard Sobolev estimates needed in \S \ref{newlowerboundlambda1}, \S \ref{almostprimitivesandregulators} and \S \ref{lambda1whitneylambda1derham}.

\item
\S \ref{newlowerboundlambda1} is the heart of this paper, building toward the key Corollary \ref{lowerboundeigenvalue}.  We construct almost-primitives for Laplacian eigen 1-forms on the image of $d^\ast$ on $M$ when $b_1(M) = 0.$  If the eigenvalue is extremely small, this almost succeeds.  On the other hand, the image of $d$ and the image of $d^\ast$ are orthogonal.  This tension results in lower bounds for $\lambda_1^1(M)_{d^\ast}.$

In \S \ref{geometrytwotypesfundamentaldomains}, we control the geometry of two types of fundamental domains for $M$ insofar as necessary to estimate terms arising in Corollary \ref{lowerboundeigenvalue}.

\item
\S \ref{almostprimitivesandregulators} describes how to extend the results of \S \ref{newlowerboundlambda1} when $b_1(M) > 0.$  In particular, our lower bounds for $\lambda_1^1(M)_{d^\ast}$ are as good when $b_1(M) = 1$ as they are when $b_1(M) = 0,$ \emph{independent of the 1-form regulator of $M$}.

\item
In \S \ref{whitneycomplex}, we compare combinatorial and Riemannian $L^p$-norms on the cochain complex $C^\bullet(M;K).$  

\item
In \S \ref{whitneylaplacianscl}, we prove that $\frac{1}{\lambda_{1,\mathrm{Whitney}}^1(M)_{d^\ast}}$ controls the (stable) area of surfaces bounding loops in $M.$

\item
In \S \ref{lambda1whitneylambda1derham}, we prove that either $\frac{1}{\lambda_1^1(M)_{d^\ast}} = O_{M_0} \left( \vol(M)^{-1} \right)$ or there is a comparison,
$$\lambda_1^1(M)_{d^\ast} = O_{M_0} \left( \vol(M) \cdot \lambda_{1, \mathrm{Whitney}}^1(M)_{d^\ast} \right).$$

\item
In \S \ref{applications}, we show how our main results imply an exponential upper bound
$$\frac{1}{\lambda_1^1(M)_{d^\ast}} \leq \exp( O_{M_0}( \vol(M))).$$
As we explain, it is often possible to prove a \emph{polynomial} upper bound for $\frac{1}{\lambda_1^1(N)_{d^\ast}}$ for hyperbolic $n$-manifolds $N$ when $n > 3.$  This gives a new prospect for proving useful upper bounds for $\frac{1}{\lambda_1^1(M)_{d^\ast}},$ for \emph{hyperbolic 3-manifolds} $M,$ by geodesically embedding $M$ in a higher dimensional hyperbolic manifold $N$ and applying retraction theorems such as those from \cite{BHW}.

\item 
In \S \ref{alternatelowerboundlambda10}, we show that if $M$ is a closed hyperbolic $n$-manifold, then
$$\frac{1}{\lambda_1^1(M)_d} \leq C \cdot \vol(M) \cdot \mathrm{diam}(M)^2$$
for some constant $C$ depending only on a lower bound for the injectivity radius of $M.$
\end{itemize}

\subsection{Acknowledgements}
We would like to thank Nicolas Bergeron, Danny Calegari, Nathan Dunfield, Aurel Page, Peter Sarnak, Akshay Venkatesh, Alden Walker, and Alex Wright for stimulating discussions related to the present work.      

\section{$L^\infty$ Estimates} \label{sobolevestimate}
The Sobolev inequality for $\mathbb{H}^n$ gives for all $\phi\in C_0^\infty(\mathbb{H}^n),$
\begin{equation}\label{sobolev}
\int_{\mathbb{H}^n}|d\phi|^2dv\geq \kappa_n \left(\int_{\mathbb{H}^n}|\phi|^{\frac{2n}{n-2}}dv \right)^{\frac{n-2}{n}},
\end{equation}
where $$\kappa_n:= \frac{ n(n-2)\vol(S^n)^{2/n}}{4}.$$   
See, for example, \cite[Section 8.2]{Hebey}.

 Let $M$ be a compact hyperbolic $n$-manifold with injectivity radius $D$. 
\begin{proposition} \label{supnormeigenfunction}
Let $f$ be a $q$-form on $M$ with 
 $\Delta f = \lambda f.$ For all $L<\frac{D}{2}$, there exists $C(n,q,L,\lambda)>0$ such that 
\begin{equation}\label{inftyest}\|f\|_{L^\infty(B(p,L))}\leq C(n,q,L,\lambda)\|f\|_{L^2(B(p,2L))}.\end{equation}

For every fixed $X,Y,$ the constant $C(n,q,L,\lambda)$ is uniformly bounded above for $\lambda \leq X$ and $L \geq Y.$     
\end{proposition}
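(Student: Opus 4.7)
The plan is to lift $f$ isometrically to a ball in $\mathbb{H}^n$, convert the $q$-form eigen-equation into a scalar subsolution inequality for $u := |f|$ via Bochner--Weitzenbock and Kato, and then run a textbook Moser iteration driven by the Sobolev inequality \eqref{sobolev}.

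First, since $2L < D \leq \mathrm{inj}(M)$, the ball $B(p, 2L) \subset M$ is isometric to a ball $\widetilde B := B(\tilde p, 2L)$ in $\mathbb{H}^n$, and the pulled-back form (still called $f$) satisfies $\Delta f = \lambda f$ on $\widetilde B$. Because $\mathbb{H}^n$ has constant sectional curvature $-1$, the Weitzenbock curvature endomorphism on $q$-forms is a scalar $r_{n,q}$ depending only on $n$ and $q$, so $\Delta f = \nabla^{\ast}\nabla f + r_{n,q}\, f$. Pairing with $f$ and using the pointwise identity $\langle \nabla^{\ast}\nabla f, f\rangle = \tfrac{1}{2}\Delta |f|^2 + |\nabla f|^2$ yields
\[
\tfrac{1}{2}\Delta |f|^2 + |\nabla f|^2 \;=\; (\lambda - r_{n,q})\, |f|^2.
\]
Kato's inequality $|\nabla |f|| \leq |\nabla f|$ then promotes this to the weak scalar subsolution inequality $\Delta u \leq \mu\, u$ on $\widetilde B$, with $\mu := \lambda - r_{n,q}$ depending only on $n, q, \lambda$. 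Possible zeros of $f$ are handled in the standard way by replacing $u$ with $u_\epsilon := (|f|^2 + \epsilon)^{1/2}$ and sending $\epsilon \to 0$ at the end.

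Next I would run Moser iteration on the nonnegative subsolution $u$. For a smooth cutoff $\eta$ supported in $B(\tilde p, r)$ and equal to $1$ on $B(\tilde p, r')$ with $L \leq r' < r \leq 2L$ and $|\nabla \eta| \leq 2/(r-r')$, testing $\Delta u \leq \mu u$ against $\eta^2 u^{p-1}$ for $p \geq 2$ and applying Cauchy--Schwarz produces
\[
\int |\nabla(\eta u^{p/2})|^2 \;\leq\; C\, p^2 \bigl( \mu + (r-r')^{-2} \bigr) \int_{B(\tilde p, r)} u^p.
\]
Feeding this into \eqref{sobolev} applied to $\eta u^{p/2}$ yields the reverse-Holder bootstrap
\[
\| u \|_{L^{p\sigma}(B(\tilde p, r'))} \;\leq\; \Bigl( \frac{C\, p^2 (\mu + (r-r')^{-2})}{\kappa_n} \Bigr)^{1/p} \| u \|_{L^p(B(\tilde p, r))}, \qquad \sigma := \frac{n}{n-2}.
\]
Iterating with $p_k := 2\sigma^k$ and a telescoping of radii $r_k$ interpolating geometrically between $2L$ at $k=0$ and $L$ in the limit gives a convergent product of constants. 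Taking $k \to \infty$ yields
\[
\| u \|_{L^\infty(B(\tilde p, L))} \;\leq\; C(n, q, L, \lambda) \, \| u \|_{L^2(B(\tilde p, 2L))},
\]
which is \eqref{inftyest} after descending through the isometric cover.

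Finally, the uniformity claim is built into the construction: the iterated constant is monotone nondecreasing in $\mu$ (hence in $\lambda$) and monotone nonincreasing in $L$ (through the denominators $(r_k - r_{k+1})^2$), so it remains bounded on $\{\lambda \leq X,\ L \geq Y\}$. I do not foresee a genuine obstacle here, as every step is classical; the only mild care needed is the $\epsilon$-regularization in the Bochner--Kato step, which is completely standard.
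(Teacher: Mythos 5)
Your proposal is correct and follows essentially the same route as the paper: both use the Bochner--Weitzenbock formula on $q$-forms over constant-curvature $\mathbb{H}^n$, apply Kato's inequality to pass to $|f|$, and then run a Moser iteration powered by the hyperbolic Sobolev inequality \eqref{sobolev}. The only cosmetic difference is that you first derive the pointwise scalar subsolution inequality $\Delta u \leq \mu u$ and then test with $\eta^2 u^{p-1}$, whereas the paper tests the form equation directly with $\psi^2 f$ for $\psi = \eta_k |f|^{\gamma_k - 1}$ and applies Kato inside the integrated identity; the two computations are algebraically equivalent.
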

 
\begin{proof} The existence of such an estimate is an immediate consequence of the Sobolev embedding theorem. 
For the convenience of the reader and to determine the dependence of $C(n,q,L,\lambda)$ on $L$ and $\lambda$, we recall a standard Moser iteration argument leading to (\ref{inftyest}). Let $\eta:\IR\to [0,1]$ be a smooth function identically $1$ on $(-\infty,L ]$ and supported on $(-\infty,2L]$, with $|d\eta|\leq \frac{2}{L}$. Set $\eta_k(t) = \eta(2^k(t-L))$. Observe $\eta_k(t)=1$ on  $(-\infty,L(1+2^{-k})  ]$ and supported on $(-\infty,L(1+2^{1-k})]$.   Let $\chi_k$ denote the characteristic function of $B(p,L(1+2^{-k}))$. Then 

\begin{equation*}
|d\eta_k|\leq \frac{2^{k+1}}{L}\eta_{k-1}.
\end{equation*}

Consider the Bochner formula for $f:$

\begin{equation}\label{boch1}
\Delta f = \nabla^*\nabla f - q(n-q)f = \lambda f.
\end{equation}

Taking the $L^2$-inner product of $\Delta f$ with $\psi^2f$ for a smooth function $\psi$ in \eqref{boch1} and integrating by parts gives 
\begin{equation}\label{agboch} 
\|\nabla (\psi f)\|^2 - \langle (q(n-q)+\lambda)\psi f,\psi f\rangle  = \||d\psi|f\|^2.
\end{equation}
Recall Kato's inequality: 
\begin{equation}\label{kato} 
\|\nabla F\|^2 \geq \||d |F|\|^2.
\end{equation}

Inserting this into (\ref{agboch}) yields 
\begin{equation}\label{agbochest} 
\|d |\psi f|\|^2 \leq (q(n-q)+\lambda)\| \psi f\|^2  + \||d\psi|f\|^2.
\end{equation}

Now we choose $$\psi = \psi_k = \eta_k(r)|f|^{\gamma_k-1},$$
$\gamma_k>1$ to be determined.  Substituting this choice of $\psi$ into \eqref{agbochest} gives  
\begin{align*}
\|d(\psi_k |f|)\|^2 &\leq  (q(n-q)+\lambda)\| \psi_k f\|^2  + \||d\psi_k|f\|^2 \\
&= (q(n-q)+\lambda)\| \psi_k f\|^2  + \|  d(\eta_k \frac{\gamma_k-1}{\gamma_k} |f|^{\gamma_k})  + \frac{1}{\gamma_k} |f|^{\gamma_k}d\eta_k  \|^2.
\end{align*}

Hence 
\begin{align*} 
\frac{2\gamma_k-1}{ \gamma_k^2}\|d(\psi_k |f|)\|^2 &\leq (q(n-q)+\lambda)\| \psi_k f\|^2  + \|  \frac{1}{\gamma_k} |f|^{\gamma_k}d\eta_k  \|^2 +2\langle  \frac{\gamma_k-1}{\gamma_k} d(\eta_k  |f|^{\gamma_k}) , \frac{1}{\gamma_k} |f|^{\gamma_k}d\eta_k  \rangle \\
&\leq  (q(n-q)+\lambda)\| \psi_k f\|^2  + \frac{1}{ \gamma_k  }\|   |f|^{\gamma_k}d\eta_k  \|^2
+ \frac{\gamma_k-1}{\gamma_k^2}  \|d(\eta_k  |f|^{\gamma_k})\|^2.
\end{align*}

Therefore
\begin{equation}  \label{anotherinequality}
\|d(\psi_k |f|)\|^2  \leq  (q(n-q)+\lambda )\gamma_k\|  \psi_k f\|^2   + \left|\left| \frac{2^{k+1}}{L}   |f|^{\gamma_k}\chi_{k-1}  \right|\right|^2.
\end{equation}
Applying \eqref{sobolev} to the left side of \eqref{anotherinequality} gives 
\begin{equation} \label{basicest} 
\kappa_n\|    |\eta_k|f|^{\gamma_k}|^{\frac{2n}{n-2}} \|^{\frac{n-2}{n}}   \leq  (q(n-q)+\lambda )\gamma_k\|  \psi_k f\|^2   +
\left|\left|\frac{2^{k+1}}{L}   |f|^{\gamma_k}\eta_{k-1}  \right|\right|^2.
\end{equation}

Set $\gamma : = \frac{n}{n-2}$ and $\gamma_k := \gamma_{k-1}\gamma = \gamma^k\gamma_0,$
then take $\gamma^k$ roots in \eqref{basicest} to get 

\begin{equation} \label{betterest} 
\|    \chi_{k} f \|_{L^{2\gamma^{k+1}}}^{2}   \leq 
\kappa_n^{-{\frac{1}{\gamma^k}}} \left[(q(n-q)+\lambda )\gamma^k+\frac{4^{k+1}}{L^2} \right]^{\frac{1}{\gamma^k}}
\| \chi_{k-1}f   \|_{L^{2\gamma^k}}^2.
\end{equation} 

Taking the product of \eqref{betterest} from $k = 0,\ldots.K$ and letting $K \to \infty$ gives  
\begin{equation} \label{inftyrest} 
\|      f \|_{L^{\infty}(B(p,L))}^{2}   \leq C(n,q,L,\lambda)\| f   \|_{L^2(B(p,2L))}^2,
\end{equation}
where 
\begin{equation*} \label{inftyconst} C(n,q,L,\lambda) =  \prod_{k=0}^\infty\kappa_n^{-{\frac{1}{\gamma^k}}} \left[(q(n-q)+\lambda )\gamma^k+\frac{4^{k+1}}{L^2} \right]^{\frac{1}{\gamma^k}}.
\end{equation*}

More generally, the same proof shows that if $\sigma$ is a section of a vector bundle $E$, $W$ a section of $\End(E)$, and $(\nabla^*\nabla + W)\sigma =0$, then 
\begin{equation*} \label{geninftyrest} 
\| \sigma \|_{L^{\infty}(B(p,L))}^{2}   \leq C(n, L,W)\| \sigma   \|_{L^2(B(p,2L))}^2,
\end{equation*}
where 
\begin{equation*} \label{geninftyconst} 
C(n, L,W) =  \prod_{k=0}^\infty\kappa_n^{-{\frac{1}{\gamma^k}}} \left[ ||W||_{L^{\infty}}\gamma^k+\frac{4^{k+1}}{L^2} \right]^{\frac{1}{\gamma^k}}.
\end{equation*}
\end{proof}
When $n$ is understood, set 
\begin{equation}\label{clambda}C(\lambda):= C(n,1,\text{Inj}(M),\lambda).
\end{equation}

\section{Constructing almost-primitives to bound $\lambda_1^1(M)$ below } \label{newlowerboundlambda1}
Let $M$ be a closed hyperbolic $n$-manifold.  Let $\Gamma$ denote $\pi_1(M)$ realized as a group of deck transformations of $\mathbb{H}^n$. 
Let $F$ denote a fundamental domain for $M$ in $\mathbb{H}^n$.  By this, we mean

\begin{itemize}
\item[(a)] 
$F$ is a closed domain in $\mathbb{H}^n$ with piecewise smooth boundary.  
\item[(b)] 
$\Gamma F = \mathbb{H}^n$ 

\item[(c)] 
The open sets $\gamma \cdot \mathrm{int}(F), \gamma \in \Gamma,$ are pairwise disjoint.  

\item[(d)]
We can partition $\partial F$ into oriented smooth submanifolds with corners $\{ \Sigma_1,\Sigma_1',\ldots,\Sigma_J, \Sigma_J' \}$  
\begin{itemize}
\item[(i)]
whose union is $\partial F$ and whose interiors are pairwise disjoint,  
\item[(ii)] the interior of each $\Sigma_j$ and $\Sigma_j'$ projects isometrically to $M$ under the quotient map $\mathbb{H}^n \rightarrow \Gamma \backslash \mathbb{H}^n = M,$ and 
\item[(iii)]
there exist $\{\gamma_1,\cdots\gamma_J\}\subset \Gamma$ with $\gamma_j \Sigma_j = - \Sigma_j'$ for $j = 1,\ldots, J.$ 
\end{itemize}
\end{itemize}

We will refer to the $\Sigma_i,$ and $\Sigma_j'$ as \emph{faces}. %, even though they are not necessarily faces in the sense of convex geometry. 
 We will refer to each $\gamma_j$ as a \emph{face-pairing element}. Set \begin{align}\label{pfdef}P_F:=\{\gamma_1,\cdots\gamma_J\}.\end{align} 

For $x,y \in \mathbb{H}^n,$ let $\alpha_{x,y}$ denote the oriented geodesic segment from $x$ to $y.$

\begin{lemma} \label{almostprimitive1formhyperbolicspace}
Let $b(q) = \int_{\alpha_{p,q}} f.$  For every vector $v \in T_q \mathbb{H}^n,$
$$|db_q(v) - f_q(v)| \leq \frac{1}{2} ||df||_{\infty} \cdot || v^\perp||,$$
where $v^\perp$ denotes the component of $v$ perpendicular to the tangent vector to $\alpha_{p,q}$ at $q.$  In particular,
$$||db - f||_{L^{\infty}(F)} \leq \frac{1}{2} ||df||_{L^{\infty}(M)}.$$
\end{lemma}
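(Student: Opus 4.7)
The strategy is to apply Stokes' theorem on a ``fan'' of geodesics emanating from $p$, and then invoke the explicit Jacobi field formula in $\mathbb{H}^n$. Fix $q \in \mathbb{H}^n$ and $v \in T_q\mathbb{H}^n$, set $L = d(p,q)$, and pick any smooth curve $s \mapsto q(s)$ with $q(0) = q$ and $\dot q(0) = v$. Form the geodesic fan $\phi(t,s) = \alpha_s(t)$ on $[0,1] \times (-\varepsilon, \varepsilon)$, where $\alpha_s \colon [0,1] \to \mathbb{H}^n$ is the constant-speed geodesic from $p$ to $q(s)$, and set $S_s = \phi([0,1]\times[0,s])$. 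The boundary of $S_s$ is the concatenation of $\alpha_0$, the short arc $c_s(\tau) = q(\tau)$ for $\tau\in[0,s]$, and $\alpha_s$ reversed, so Stokes' theorem gives
$$\int_{S_s} df \;=\; b(q) + \int_{c_s} f - b(q(s)).$$
Dividing by $s$ and letting $s \to 0^+$ yields
$$db_q(v) - f_q(v) \;=\; -\int_0^1 df_{\alpha_0(t)}\bigl(\dot\alpha_0(t),\, J(t)\bigr)\, dt,$$
where $J$ is the Jacobi field along $\alpha_0$ with $J(0) = 0$ and $J(1) = v$.

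To estimate the right-hand side, I would decompose $v = v^{\parallel} + v^{\perp}$ into pieces tangent and perpendicular to $\alpha_0$ at $q$, inducing a splitting $J = J^{\parallel} + J^{\perp}$. In constant curvature the Jacobi equation preserves this decomposition, and $J^{\parallel}$ remains everywhere proportional to $\dot\alpha_0$, so $df(\dot\alpha_0, J^{\parallel}) = 0$. Only the normal part survives; along the unit-speed reparametrization the Jacobi equation in curvature $-1$ reduces to $(J^{\perp})'' - J^{\perp} = 0$, which together with $J^{\perp}(0) = 0$ and $J^{\perp}(L) = v^{\perp}$ gives $|J^{\perp}(\tau)| = \frac{\sinh\tau}{\sinh L}\,|v^{\perp}|$, with $J^{\perp} \perp \dot\alpha_0$ throughout.

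Using $|df(X, Y)| \le \|df\|_\infty |X|\,|Y|$ on the orthogonal pair $(\dot\alpha_0, J^{\perp})$ and changing variables to arc length then reduces everything to the elementary integral
$$\int_0^L \frac{\sinh\tau}{\sinh L}\, d\tau \;=\; \tanh(L/2),$$
which yields the claimed pointwise bound and hence the uniform estimate on $F$. The argument is essentially routine; the only mildly delicate point is justifying the first-order limit as $s \to 0^+$ of the Stokes' identity, which amounts to recognizing that $J = \partial_s\phi(\cdot, 0)$ is precisely the Jacobi field of the geodesic fan.
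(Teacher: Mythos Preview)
Your argument is correct and in fact yields the sharp bound
\[
|db_q(v)-f_q(v)|\;\le\;\tanh\!\bigl(L/2\bigr)\,\|df\|_\infty\,|v^{\perp}|,\qquad L=d(p,q).
\]
The route is genuinely different from the paper's: they use Stokes on the geodesic \emph{triangle} with vertices $p,q,\exp_q(\epsilon v)$ and then invoke an area formula for a hyperbolic right triangle, whereas you use Stokes on a geodesic \emph{fan} and differentiate, which converts the area computation into the explicit normal Jacobi field $\sinh\tau/\sinh L$. Your approach is a bit more analytic but has the advantage of producing the pointwise integrand directly; the paper's is more elementary but relies on a closed-form area identity.

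There is, however, a slip in your last step: $\tanh(L/2)\to 1$ as $L\to\infty$, so the integral you compute gives the constant $1$, not $\tfrac{1}{2}$. You cannot conclude the stated bound $\tfrac{1}{2}\|df\|_\infty|v^\perp|$ from $\tanh(L/2)$ alone. In fact the paper's proof contains the mirror-image error: the area of a hyperbolic right triangle with legs $a,b$ is $2\arctan\bigl(\tanh(a/2)\tanh(b/2)\bigr)$, not $\arctan\bigl(\tanh(a/2)\tanh(b/2)\bigr)$ (let $a,b\to\infty$: the area tends to $\pi/2$, not $\pi/4$). Correcting this, the paper's computation also gives $\tanh(L/2)$, in agreement with yours. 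So the lemma as stated is off by a factor of $2$; the right constant is $1$. This is harmless for the rest of the paper, since the constant $\tfrac{1}{2}$ only enters the cruder volume-type estimates downstream and can be replaced by $1$ throughout without affecting any conclusion.
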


\begin{proof}
Let $\ell \subset T_q \mathbb{H}^n$ be the line tangent $\alpha_{p,q}$ at $q.$  

Suppose $v \in \ell.$  Then $db_q(v) - f_q(v) = 0.$

Suppose $v \in \ell^\perp.$  Let $\Delta_\epsilon$ be the geodesic triange with vertices $p,q,\exp_q(\epsilon v)$ and oriented boundary $\alpha_{p,q}, \alpha_{q,\exp_q(\epsilon v)},\alpha_{\exp_q(\epsilon v),p}.$  By Stokes,  
\begin{align} \label{directionalderivative}
\frac{1}{\epsilon}( b(\exp_q(\epsilon v)) - b(q)) + \frac{1}{\epsilon} \int_{\alpha_{q,\exp_q(\epsilon v)}} f &= \int_{\partial \Delta_\epsilon} f \nonumber \\
&= \frac{1}{\epsilon} \int_{\Delta_{\epsilon}} df. 
\end{align}

The area of a geodesic hyperbolic triangle with edges of lengths $a,b$ meeting at a right angle equals $\arctan \left(  \tanh \frac{a}{2} \cdot \tanh \frac{b}{2} \right).$  Letting $\epsilon \to 0$ in \eqref{directionalderivative} gives
$$| db_q(v) - f_q(v)| \leq ||df||_{\infty} \cdot \frac{||v||}{2} \cdot \tanh \left( \frac{d(p,q)}{2}  \right) \leq \frac{1}{2} ||df||_{\infty} \cdot ||v||.$$
The result follows.
\end{proof}

%\begin{lemma} \label{almostprimitive1formhyperbolicspace}
% Let $b(q) := \int_{\alpha_{p,q}} f.$ Then
%$$|db_q - f_q| \leq (n-1)||df||_{\infty} ,$$
%and $|db(v) - f(v)|=0$, for $v$ tangent to $\alpha_{p,q}$. 
%\end{lemma}
%
%\begin{proof}
%Choose local spherical coordinates $(r,\theta)$ in a neighborhood of $\alpha_{p,q}\setminus\{p\}$ in $\mathbb{H}^n\setminus\{p\}$, and write $f = f_rdr+f_jd\theta^j$ in these coordinates. Then with this notation,
%$$b(r,\theta) = \int_0^{r}f_r(s,\theta)ds,$$ 
%and
%\begin{align}db(r,\theta) &= f_r(r,\theta)dr + \left( \int_0^{r}\frac{\p f_r}{\p \theta^j}(s,\theta)ds \right)d\theta^j\nonumber\\
%&= f(r,\theta) + \left( \int_0^{r}  df(\frac{\p}{\p\theta^j},\frac{\p}{\p r})(s,\theta) ds \right)d\theta^j.\end{align} 
%Choose our spherical coordinates so that along $\alpha_{p,q}$, 
%$\langle d\theta^j,d\theta^i\rangle(s,\theta) =\frac{1}{\sinh^2(s)}$. Then  
%
%\begin{align*}
%\left| \left(\sum_j\int_0^{r}  df(\frac{\p}{\p\theta^j},\frac{\p}{\p r})(s,\theta) ds \right)d\theta^j \right| &\leq 
%\|df\|_{L^\infty} \frac{n-1}{\sinh(r)} \int_0^{r}  \sinh(s)ds \\
%&< (n-1)\|df\|_{L^\infty} .
%\end{align*} 
% 
%%\begin{equation}db(q) := f + \textcolor{red}{\sum_{j=1}^{n-1} \omega^j\int_{\alpha_{p,q}}\frac{\sinh(s)}{\sinh(r(q))}e^*(\omega^j)df}.
%%\end{equation}
%The result follows.
%\end{proof}

\begin{proposition}\label{newalmostprimitive}%[Tension between lying in $\mathrm{im}\, d^\ast$ and being almost-primitive] 
Let $f$ be a 1-form on $M$ with $d^*f = 0.$  For every face $\Sigma_j$ in the aforementioned partition of $\partial F,$ fix some $q_j \in \Sigma_j.$  Then
\begin{equation*}
||f||_2^2 \leq \vol(\p F) \|f \|_{L^\infty} \left(  3\pi\|df\|_{L^\infty}
+\sup_j \left|\int_{\alpha_{q_j,\gamma_jq_j}}f \right| \right) + \frac{5}{2}\|df\|_{L^\infty}\|f\|_{L^2(M)}\sqrt{\vol(M)}.
%&+ \textcolor{blue}{(n-1) || df ||_{L^{\infty}(M)} \cdot ||f ||_{L^2(M)} \cdot \sqrt{\vol(M)}, }\\
\end{equation*}
\end{proposition}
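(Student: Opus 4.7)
The plan is to lift $f$ to a $\Gamma$-invariant 1-form on $\mathbb{H}^n$, build an explicit almost-primitive on the fundamental domain $F$ via Lemma \ref{almostprimitive1formhyperbolicspace}, and exploit $d^\ast f=0$ via integration by parts. Fix a basepoint $p\in F$ and set $b(q):=\int_{\alpha_{p,q}}f$, so that Lemma \ref{almostprimitive1formhyperbolicspace} yields the pointwise bound $\|f-db\|_{L^\infty(F)}\leq \tfrac{1}{2}\|df\|_{L^\infty(M)}$. Decomposing
\[
\|f\|_{L^2(M)}^2 = \int_F \langle f,db\rangle\,dv + \int_F \langle f,f-db\rangle\,dv,
\]
the second summand is controlled by Cauchy--Schwarz and the almost-primitive bound, producing the $\|df\|_{L^\infty}\|f\|_{L^2}\sqrt{\vol(M)}$ term; while Green's identity together with $d^\ast f=0$ reduces the first summand to the boundary integral $\int_{\partial F} b\cdot f(\nu)\,d\sigma$.

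The next step is to pair up boundary faces using the face-pairings. Because $f$ is $\Gamma$-invariant and $d\gamma_j$ identifies the outward normal on $\Sigma_j$ with the inward normal on $\Sigma_j'$, the boundary integral collapses to
\[
\sum_{j=1}^J \int_{\Sigma_j}\bigl(b(q)-b(\gamma_j q)\bigr)\,f(\nu(q))\,d\sigma(q),
\]
so everything reduces to bounding $|b(q)-b(\gamma_j q)|$ uniformly in $q\in \Sigma_j$. Stokes' theorem on the geodesic triangle $T$ with vertices $p,q,\gamma_j q$ gives $b(q)-b(\gamma_j q) = -\int_{\alpha_{q,\gamma_j q}}f + \int_T df$, with $|\int_T df|\leq \pi\|df\|_\infty$ by the Gauss--Bonnet bound on hyperbolic triangle areas. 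Stokes' theorem on the geodesic quadrilateral $Q$ with vertices $q_j,q,\gamma_j q,\gamma_j q_j$ then yields $\int_{\alpha_{q,\gamma_j q}}f - \int_{\alpha_{q_j,\gamma_j q_j}}f = \int_Q df$, where the two ``long'' sides $\alpha_{q_j,q}$ and $\alpha_{\gamma_j q,\gamma_j q_j}$ drop out because $\gamma_j^\ast f = f$ causes their contributions to cancel exactly, and Gauss--Bonnet gives $|\int_Q df|\leq 2\pi\|df\|_\infty$. Assembling,
\[
|b(q)-b(\gamma_j q)|\leq 3\pi\|df\|_\infty + \Bigl|\int_{\alpha_{q_j,\gamma_j q_j}}f\Bigr|.
\]

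Inserting this estimate into the boundary sum, bounding $|f(\nu)|\leq \|f\|_{L^\infty}$ and using $\sum_j \vol(\Sigma_j)\leq \vol(\partial F)$ produces the first term of the claimed inequality, and combining with the Cauchy--Schwarz estimate above finishes the proof up to numerical constants. The main obstacle is the quadrilateral step: the naive comparison $b(q)\leftrightarrow b(\gamma_j q)$ would introduce error terms proportional to $\mathrm{diam}(\Sigma_j)\cdot\|f\|_\infty$, which are uncontrollable for a generic fundamental domain. The essential mechanism that saves the argument is that the $\Gamma$-invariance of $f$ makes the line integrals along the two long, $\gamma_j$-related sides of the quadrilateral cancel exactly, so all of the $q$-dependence of $|b(q)-b(\gamma_j q)|$ gets absorbed into the universal $2\pi$ coming from the hyperbolic area of a geodesic quadrilateral.
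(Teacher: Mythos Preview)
Your proposal is correct and follows essentially the same approach as the paper. The only cosmetic difference is how the region between the paths is triangulated: the paper uses three geodesic triangles $\Delta_{p,q,q_j}\cup\Delta_{p,\gamma_j q_j,\gamma_j q}\cup\Delta_{p,q_j,\gamma_j q_j}$ all sharing the vertex $p$, while you use one triangle $\Delta_{p,q,\gamma_j q}$ plus the quadrilateral $Q$; in both cases the same $\Gamma$-invariance cancellation of the ``long sides'' is the key mechanism, and both yield the same $3\pi\|df\|_\infty$ bound.
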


\begin{proof} 
Lift $f$ to $\mathbb{H}^n$. For $q\in \bar F$, define 
\begin{equation}b(q) := \int_{\alpha_{p,q}}f,
\end{equation}
as in Lemma \ref{almostprimitive1formhyperbolicspace}
%where $\alpha_{p,q}$ denotes the oriented geodesic in $\mathbb{H}^n$ from $p$ to $q$. 

  Hence

\begin{align}\label{normfsq}
\|f\|^2 &=  \int_F f \wedge \ast f \nonumber \\
&= \int_F db \wedge \ast f + \int_F (f - db) \wedge \ast f \nonumber \\
&= \int_{\p F}b \ast f + \int_F (f - db) \wedge \ast f \nonumber \\
%&=\int_F db\wedge\ast f - \int_F\sum_{j=1}^{n-1} \omega^j \left(\int_{\alpha_{p,q}}\frac{\sinh(s)}{\sinh(r(q))}e^*(\omega^j)df \right)\wedge\ast f \nonumber \\
%&= \int_{\p F}b \ast f - \int_F\sum_{j=1}^{n-1}\omega^j \left(\int_{\alpha_{p,q}}\frac{\sinh(s)}{\sinh(r(q))}e^*(\omega^j)df \right)\wedge\ast f.
\end{align}
Write
\begin{equation}\begin{split}& \int_{\p F}b \ast f  =\sum_{j=1}^J\int_{\Sigma_j-\gamma_j\Sigma_j}b \ast f
=\sum_{j=1}^J\int_{\Sigma_j } \left(\int_{\alpha_{p,q}+ \alpha_{\gamma_jq,p}}f \right) \ast f.
\end{split}\end{equation}
Let $\Delta_{a,b,c}$ denote the oriented hyperbolic triangle with vertices $a,b,c$ and orientation such that $\p \Delta_{a,b,c}=\alpha_{ab}+\alpha_{bc}+\alpha_{ca}$.   For $q\in \Sigma_j$, let 
\begin{equation}R_{j,q}:= \Delta_{p,q,q_j}\cup \Delta_{p,\gamma q_j,\gamma q }\cup \Delta_{p, q_j,\gamma q_j }.\end{equation}  
Then 
\begin{equation}\int_{\alpha_{ \gamma_jq,p}+\alpha_{ p,q}}f  = - \int_{\alpha_{q_j,\gamma_jq_j}}f + \int_{R_{j,q}}df.
\end{equation}
 Here we have used $\int_{\alpha_{  q,q_j}+\alpha_{ \gamma q_j,\gamma q}}f = 0.$ 
  This gives 
\begin{equation}\label{thisgives}\begin{split}  \int_{\p F}b \ast f  
&  =\sum_{j=1}^J \left(- \int_{\alpha_{q_j,\gamma_jq_j}}f \right) \int_{\Sigma_j } \ast f+\sum_{j=1}^J\int_{\Sigma_j } \left(\int_{  R_{j,q}}df \right) \ast f
.\\ 
\end{split}\end{equation}
Since hyperbolic triangles have area at most $\pi$, we have 
\begin{equation} \left| \int_{  R_{j,q}}df \right|\leq 3\pi\|df\|_{L^\infty}.
\end{equation}

Substituting \eqref{thisgives} into \eqref{normfsq} and estimating gives  
\begin{align}\label{normfsqb} 
\|f\|_{L^2}^2 &\leq 3\pi\|df\|_{L^\infty}\sum_{j=1}^J\int_{\Sigma_j } | f|dA
+\sum_{j=1}^J \left| \int_{\alpha_{q_j,\gamma_jq_j}}f \right| \cdot \left| \int_{\Sigma_j } \ast f \right| +  || (f - db) \wedge \ast f ||_{L^1(F)}  \nonumber \\
%&  + 2\|df\|_{L^\infty}\|f\|_{L^1(M)}  \\
&\leq \vol(\p F) \|f \|_{L^\infty} \left(  3\pi\|df\|_{L^\infty}
+\sup_j \left|\int_{\alpha_{q_j,\gamma_jq_j}}f \right| \right) + || (f - db) \wedge \ast f ||_{L^2(F)} \cdot \sqrt{\vol(M)}  \nonumber \\
&\leq \vol(\p F) \|f \|_{L^\infty} \left(  3\pi\|df\|_{L^\infty}
+\sup_j \left|\int_{\alpha_{q_j,\gamma_jq_j}}f \right| \right) + || f - db ||_{L^{\infty}(F)} \cdot ||f ||_{L^2(M)} \cdot \sqrt{\vol(M)}  \nonumber \\
%& + 2\|df\|_{L^\infty}\|f\|_{L^2(M)}\sqrt{\vol(M)} \nonumber \\
&\leq \vol(\p F) \|f \|_{L^\infty} \left(  3\pi\|df\|_{L^\infty}
+\sup_j \left|\int_{\alpha_{q_j,\gamma_jq_j}}f \right| \right) + \frac{1}{2}|| df ||_{L^{\infty}(M)}   \cdot ||f ||_{L^2(M)} \cdot \sqrt{\vol(M)},  \nonumber\\
%& + 2\|df\|_{L^\infty}\|f\|_{L^2(M)}\sqrt{\vol(M)} \nonumber \\
\end{align}
where the last line follows from Lemma \ref{almostprimitive1formhyperbolicspace}.

\end{proof}

\begin{corollary} \label{lowerboundeigenvalue}
Let $f$ be a 1-form on $f$ satisfying $d^\ast f = 0.$  Suppose that $f$ is a linear combination of eigen 1-forms of eigenvalue at most $\lambda.$  Let $q_j \in \Sigma_j$ be the fixed reference points chosen in Proposition \ref{newalmostprimitive}. Then 

\begin{align} \label{lowerboundeigenvalueequation}
 ||f||_2 &\leq \sqrt{\lambda} \cdot \vol(\p F) \cdot C(\lambda)^2 \cdot 3\pi \cdot ||f||_2  
+ C( \lambda) \cdot \sup_j \left| \int_{\alpha_{q_j, \gamma_j q_j}} f \right|  \nonumber \\
&+ \sqrt{\lambda} \cdot   C(\lambda) \cdot \frac{1}{2} \cdot \sqrt{\vol(M)} \cdot ||f||_2 . \nonumber\\
\end{align}

In particular, if some multiple of $\gamma_j$ bounds for every $j,$ then

\begin{align*}
\frac{1}{\sqrt{\lambda}} &\leq C(\lambda)^2(3\pi\cdot\vol(\p F)   +   \sup_j  \mathrm{sArea}(\gamma_j))   +   C( \lambda) \cdot \frac{1}{2} \cdot \sqrt{\vol(M)}. \nonumber\\
\end{align*}

The quantity $\mathrm{sArea}(\gamma)$ is defined in \eqref{sarea}, and  $C(\lambda)$ is defined in \eqref{clambda}.  
\end{corollary}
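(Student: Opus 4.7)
The plan is to apply Proposition \ref{newalmostprimitive} to $f$ and then convert each pointwise factor on its right-hand side into a multiple of $\|f\|_{L^2}$ using the Moser sup-norm estimate of Proposition \ref{supnormeigenfunction}. First I would use $d^\ast f = 0$ and the spectral hypothesis to write
$$\|df\|_{L^2}^2 \;=\; \langle d^\ast df, f\rangle \;=\; \langle \Delta f, f\rangle \;\leq\; \lambda\,\|f\|_{L^2}^2.$$
Since $\Delta$ commutes with $d$, the 2-form $df$ is itself a linear combination of eigenforms (now for the Hodge Laplacian on 2-forms) with eigenvalues in $[0,\lambda]$. Applying Proposition \ref{supnormeigenfunction} to $f$ as a 1-form and to $df$ as a 2-form then yields
$$\|f\|_{L^\infty} \leq C(\lambda)\,\|f\|_{L^2}, \qquad \|df\|_{L^\infty} \leq C(\lambda)\,\|df\|_{L^2} \leq \sqrt{\lambda}\,C(\lambda)\,\|f\|_{L^2}.$$
Inserting these three estimates into the conclusion of Proposition \ref{newalmostprimitive} and dividing by $\|f\|_{L^2}$ will produce \eqref{lowerboundeigenvalueequation}.

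For the second assertion, suppose $\gamma_j^m$ bounds an oriented surface $S$ mapped into $M$. I would apply Stokes' theorem to $df$ over $S$:
$$\left| \int_{\alpha_{q_j,\gamma_j q_j}} f \right| \;=\; \frac{1}{m}\left| \int_S df \right| \;\leq\; \frac{\mathrm{area}(S)}{m}\,\|df\|_{L^\infty},$$
and then take the infimum over all admissible pairs $(m,S)$ to obtain $\big|\int_{\alpha_{q_j,\gamma_j q_j}} f\big| \leq \mathrm{sArea}(\gamma_j)\,\|df\|_{L^\infty}$. Substituting this bound together with $\|df\|_{L^\infty} \leq \sqrt{\lambda}\,C(\lambda)\,\|f\|_{L^2}$ into \eqref{lowerboundeigenvalueequation}, then dividing through by $\sqrt{\lambda}\,\|f\|_{L^2}$, will yield the advertised upper bound on $1/\sqrt{\lambda}$.

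The main technical subtlety will be in extending Proposition \ref{supnormeigenfunction} from a single eigenform to a linear combination with eigenvalues bounded by $\lambda$: the Moser iteration of \S \ref{sobolevestimate} exploits the pointwise identity $\Delta f = \lambda f$, which does not hold for a spectral truncation. One can handle this either by running the iteration for each eigencomponent of $f$ separately and using orthogonality to recombine (losing only combinatorial factors that are absorbed into $C(\lambda)$ in the cocompact hyperbolic setting), or more cleanly by invoking pointwise bounds on the Schwartz kernel of $\mathbf{1}_{[0,\lambda]}(\Delta)$, which are available under the injectivity-radius hypothesis built into $C(\lambda)$. Everything past this point is direct substitution and a one-line appeal to Stokes' theorem.
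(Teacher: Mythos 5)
Your plan reproduces the paper's own proof: feed the $L^\infty$ bounds from Proposition \ref{supnormeigenfunction} into Proposition \ref{newalmostprimitive}, upgrade the period integrals via Stokes' theorem and the definition of stable area, and divide by $\sqrt{\lambda}\,\|f\|_{L^2}.$ Your three substitutions $\|f\|_{L^\infty}\leq C(\lambda)\|f\|_{L^2}$, $\|df\|_{L^\infty}\leq C(\lambda)\|df\|_{L^2}$, and $\|df\|_{L^2}\leq\sqrt{\lambda}\|f\|_{L^2}$ (the last using $d^\ast f=0$ so that $\Delta f=d^\ast d f$) are exactly what the authors' one-line proof does silently, and the Stokes step matches theirs word for word.

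The one thing worth dwelling on is your final paragraph, which flags a genuine soft spot the paper does not acknowledge: Proposition \ref{supnormeigenfunction} is proved only for a single eigenform satisfying $\Delta f = \lambda f$ pointwise, whereas the corollary's hypothesis is a spectral truncation. This does matter, because later in the paper (Propositions \ref{lowerboundeigenvaluepositivebettinumber} and \ref{regulatorindependentbounds}) the corollary is invoked with $f$ replaced by $f-h$, $h$ harmonic, which is precisely a linear combination of two eigenforms with eigenvalues $0$ and $\lambda$. However, your first proposed fix is itself shaky: running the Moser iteration per eigencomponent and recombining ``using orthogonality'' fails because the eigencomponents are orthogonal on all of $M$ but not on the ball $B(p,2L)$ that appears on the right side of \eqref{inftyest}; the triangle-inequality recombination therefore picks up a factor that can grow with the number of eigenvalues below $\lambda$, which in general scales with $\vol(M)$ and is not absorbable into $C(\lambda)$. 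Your second fix is the right one: $\|f\|_{L^\infty}\leq\sup_x K_\lambda(x,x)^{1/2}\|f\|_{L^2}$ where $K_\lambda$ is the Schwartz kernel of $\mathbf{1}_{[0,\lambda]}(\Delta)$, and $K_\lambda(x,x)\leq e^{\lambda}\,p_1(x,x)$ is controlled by the heat kernel, which in turn is bounded under the same injectivity-radius hypothesis already built into $C(\lambda)$. With that substitute constant the rest of your argument goes through unchanged.
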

\begin{proof}
The first part follows upon applying the Sobolev inequality from Proposition \ref{supnormeigenfunction} to Proposition \ref{newalmostprimitive}.

For the second part:  suppose $\gamma^m$ is bounded by a surface $S.$  By Stokes theorem,
\begin{align*}
\left| \int_{\alpha_{q,\gamma q}} f \right| &= \frac{1}{m} \left| \int_{\alpha_{q,\gamma^m q}} f \right| = \frac{1}{m} \left| \int_S df \right| \\
&\leq ||df ||_{\infty} \cdot \frac{ \mathrm{area}(S)}{m}.
\end{align*}
Applying this inequality to all period integrals appearing on the right side of the inequality from Proposition \ref{newalmostprimitive} together with the Sobolev inequality \ref{supnormeigenfunction} gives the second part of the Corollary.     
\end{proof}

\subsection{The geometry of two types of fundamental domains} \label{geometrytwotypesfundamentaldomains}
In this section, we analyze the geometry of two types of fundamental domains $F.$  Understanding this geometry is necessary to control the $\vol(\partial F)$ terms occurring in the estimates in Corollary \ref{lowerboundeigenvalue}.  We also need to control $d(q_j,\gamma_jq_j)$ in order to bound the periods $\int_{q_j, \gamma_j q_j} f$.  Upper bounds on $\mathrm{diam}(F)$ suffice.

\subsubsection{Type 1: tree-type fundamental domains induced from a covering map}
 Let $M_0$ be a closed hyperbolic $n$-manifold.  Let $M \rightarrow M_0$ be a covering.  Let $F_0$ be a (closed) Dirichlet fundamental domain for $M_0$ relative to a fixed center $p_0 \in \mathbb{H}^n.$  Let $\Gamma$ and $\Gamma_0$ respectively denote $\pi_1(M)$ and $\pi_1(M_0).$  Let 

$$S_0 = \{  \gamma \in \Gamma_0: \gamma F_0 \cap F_0 \neq \emptyset \}.$$

Note that $S_0$ is a symmetric generating set for $\pi_1(M_0).$  The fundamental domain $F_0$ induces a tiling of $\mathbb{H}^n.$  Let $\mathcal{G}(M_0)$ denote the dual graph of this tiling.  Quotienting this tiling by $\Gamma$ induces a tiling of $M.$  The dual graph of the induced tiling of $M,$ which equals $\Gamma \backslash \mathcal{G}(M_0),$ is isomorphic to the Schreier graph $\mathcal{G}(M,M_0)$ of $M$ relative to $M_0$: vertices are given by elements of $\Gamma \backslash \Gamma_0$ and two vertices are connected by an edge if they differ by $s \in S_0$ \cite[Corollary 0.9]{Breuillard}. 

Let $T$ be a spanning tree in $\mathcal{G}(M,M_0).$ Fix a vertex $v_0 \in T.$  Associated with the unique geodesic in $T$ from $v_0$ to $v$ is a corresponding ordered sequence of elements $s_1,\ldots,s_n \in S_0$; these are the Schreier graph edge labels in the ordered edge sequence determined by the geodesic from $v$ to $v_0.$  Let $\gamma_{v_0,v} = s_n s_{n-1} \cdots s_1.$

\begin{define}
The \emph{tree-type fundamental domain} $F_T$ associated with $F_0$ and $T$ is 
$$F_T := \bigcup_{\text{vertices } v \text{ of } T} F_v, \text{ where } F_v:=  \gamma_{v_0,v}F_0.$$
\end{define}

The boundary $\partial F_T$ is a union of $\Gamma$-translates of codimension-1 faces of $F_0$ which project isometrically to $M$ and which have disjoint interiors.  Because $M$ is closed, these boundary faces can be identified in pairs; i.e. there  exists a decomposition of the boundary $F_T$ into $F_0$-faces $\Sigma_1,\Sigma_1',\ldots,\Sigma_J,\Sigma_J'$ and corresponding $\gamma_1,\ldots,\gamma_J \in \Gamma$ for which $\gamma_j \Sigma_j = - \Sigma_j'.$  

Thus, $F_T$ is a fundamental domain as defined at the beginning of \S \ref{newlowerboundlambda1}.  It is not in general convex, but we do not require convexity for the arguments of this section.  

\begin{lemma}\label{treetypeareabound}
The boundary volume $\vol(\partial F_T)$ is bounded above by 
$$\vol(\partial F_T) < \vol(\partial F_0) \cdot \frac{\vol(M)}{\vol(M_0)}.$$
\end{lemma}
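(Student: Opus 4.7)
My plan is a straightforward face-counting argument. Set $k := [\Gamma_0 : \Gamma] = \vol(M)/\vol(M_0)$; this is exactly the number of vertices of $T$. By construction,
$$F_T = \bigcup_{v \in T} F_v, \qquad F_v = \gamma_{v_0,v} F_0,$$
is a union of $k$ isometric translates of $F_0$ whose interiors are pairwise disjoint (since $F_0$ is a fundamental domain for $\Gamma_0$ and the $\gamma_{v_0,v}$ lie in distinct cosets $\Gamma \backslash \Gamma_0$). In particular, each piece contributes $\vol(\partial F_v) = \vol(\partial F_0)$ to the tentative total boundary.

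The key observation is that a codimension-$1$ face appearing on two adjacent pieces $F_v, F_{v'}$ lies in the interior of $F_T$ and so is \emph{excluded} from $\partial F_T$, even though it is counted in both $\vol(\partial F_v)$ and $\vol(\partial F_{v'})$. By the definition of the dual graph $\mathcal{G}(M_0)$, each edge $e = (v,v')$ of the Schreier graph $\mathcal{G}(M,M_0)$ corresponds to some $s \in S_0$ witnessing that $F_v \cap F_{v'}$ contains a full codimension-$1$ face $\Sigma_e$ of $F_0$, which has strictly positive area $\vol(\Sigma_e) > 0$. Since $T$ is a spanning tree on $k$ vertices, it contains exactly $k-1$ such edges, giving $k-1$ distinct shared faces to subtract (each counted twice).

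Combining these observations yields
$$\vol(\partial F_T) \;\leq\; k \cdot \vol(\partial F_0) \;-\; 2 \sum_{e \in T} \vol(\Sigma_e) \;<\; k \cdot \vol(\partial F_0) \;=\; \vol(\partial F_0) \cdot \frac{\vol(M)}{\vol(M_0)},$$
where the strict inequality holds whenever $k \geq 2$ (so $T$ has at least one edge). This is the desired bound.

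There is no real obstacle here beyond bookkeeping; the only delicate point is the degenerate case $k = 1$ (i.e.\ $M = M_0$), in which $T$ is a single vertex, $F_T = F_0$, and both sides of the asserted inequality equal $\vol(\partial F_0)$. In applications one presumably takes $M \ne M_0$, so the strict inequality is harmless; alternatively one could state the conclusion with $\leq$. I would also remark in the write-up that one could get a sharper bound by summing over \emph{all} pairs $(v,v')$ of vertices of $\mathcal{G}(M,M_0)$ that become adjacent in the tiling (not only those joined by edges of $T$), but this is unnecessary for the stated result.
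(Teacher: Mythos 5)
Your argument is correct and matches the paper's approach, whose entire proof is the single sentence ``This follows because $F_T$ is a union of $\frac{\vol(M)}{\vol(M_0)}$ copies of $F$.'' Your version is simply more careful: it explains why the shared codimension-one faces along the spanning tree edges yield the \emph{strict} inequality (for $k \geq 2$), and it correctly flags the degenerate case $k = 1$ where the statement only holds with $\leq$.
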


\begin{proof}
This follows because $F_T$ is a union of $\frac{\vol(M)}{\vol(M_0)}$ copies of $F.$  
\end{proof}

\begin{lemma} \label{treetypediameterbound}
The diameter of $F_T$ is bounded above by
$$\mathrm{diam}(F_T) \leq \mathrm{diam}(F_0) \cdot \left( \mathrm{diam}(T) + 1 \right),$$
where $\mathrm{diam}(T)$ denotes the combinatorial diameter of the tree $T.$
\end{lemma}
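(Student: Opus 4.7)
The plan is a straightforward chain-of-tiles argument exploiting the decomposition $F_T = \bigcup_{v \in T} F_v$, where each $F_v = \gamma_{v_0,v} F_0$ is a $\Gamma_0$-translate of $F_0$ and therefore has diameter exactly $\mathrm{diam}(F_0)$. The key structural observation is that tree-adjacency in $T$ corresponds to face-adjacency in $\mathbb{H}^n$: if $v \sim w$ is an edge of $T \subset \mathcal{G}(M,M_0)$, then by the definition of the Schreier graph one has $\gamma_{v_0,w} = s \cdot \gamma_{v_0,v}$ for some $s \in S_0 = \{\gamma \in \Gamma_0 : \gamma F_0 \cap F_0 \neq \emptyset\}$, so $F_v \cap F_w$ contains a codimension-one face of the tiling.

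First I would take arbitrary points $x,y \in F_T$, choose vertices $v,w$ of $T$ with $x \in F_v$ and $y \in F_w$, and let $v = u_0, u_1, \ldots, u_k = w$ be the unique geodesic in $T$ from $v$ to $w$, where $k = d_T(v,w) \leq \mathrm{diam}(T)$. By the observation above, for each $i \in \{1,\ldots,k\}$ I can pick a point $p_i \in F_{u_{i-1}} \cap F_{u_i}$, producing a chain $x,\,p_1,\,p_2,\,\ldots,\,p_k,\,y$ in which every consecutive pair lies in a common tile.

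Finally I would apply the triangle inequality along this chain. Each of the $k+1$ consecutive pairs $(x,p_1), (p_1,p_2), \ldots, (p_{k-1},p_k), (p_k,y)$ lies inside one of the tiles $F_{u_j}$ and is therefore separated by at most $\mathrm{diam}(F_0)$, which yields
\[
d(x,y) \;\leq\; (k+1)\,\mathrm{diam}(F_0) \;\leq\; \bigl(\mathrm{diam}(T)+1\bigr)\,\mathrm{diam}(F_0).
\]
Since $x,y \in F_T$ were arbitrary, this gives the claimed bound. There is no real obstacle; the only point requiring care is the tile-adjacency$\leftrightarrow$tree-adjacency correspondence, which is immediate from the identification of $\mathcal{G}(M,M_0)$ with the dual graph of the induced tiling of $M$ and lifts unambiguously to $\mathbb{H}^n$ through the path in $T$ starting at $v_0$.
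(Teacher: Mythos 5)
Your argument is essentially the same as the paper's: walk along the unique tree geodesic between the two tiles containing the given points, pick a witness point for each consecutive pair of tiles, and apply the triangle inequality along the resulting chain of at most $\mathrm{diam}(T)+1$ hops, each of length at most $\mathrm{diam}(F_0)$. One small inaccuracy worth flagging (though it does not affect the proof): you assert that tree-adjacency forces $F_v \cap F_w$ to contain a \emph{codimension-one} face, but the Schreier generating set is $S_0 = \{\gamma \in \Gamma_0 : \gamma F_0 \cap F_0 \neq \emptyset\}$, so adjacency in $\mathcal{G}(M,M_0)$ only guarantees a nonempty intersection, which could be lower-dimensional. Your argument only needs nonempty intersection to pick the chain points $p_i$, so the overstatement is harmless; the paper simply uses this weaker fact without comment.
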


\begin{proof}
Let $p \in F_{a}$ and $q \in F_{b},$ where  $a, b$ are vertices of $T.$  
Let $a = v_0 \to v_1 \to \cdots \to v_n =b$ be the unique shortest path from $a$ to $b.$  Let $F_{v_0},\ldots,F_{v_n}$ be the corresponding chain of $F_0$-tiles in $F_T.$  Every point $p_i \in F_{v_i}$ lies within $\mathrm{diam}(F_{v_i}) = \mathrm{diam}(F_0)$ of some point $p_{i+1}$ of $F_{i+1}.$  The broken geodesic path $p = p_0 \to p_1 \to \cdots \to p_n = q$ therefore has total length at most $(n+1) \cdot \mathrm{diam}(F_0) \leq (\mathrm{diam}(T) + 1) \cdot \mathrm{diam}(F_0).$  Therefore, $d(p,q) \leq (\mathrm{diam}(T)+1) \cdot \mathrm{diam}(F_0).$ 
\end{proof}

For a special choice of tree $T,$ the diameter $\mathrm{diam}(T)$ can be bounded above in terms of $\mathrm{diam}(\mathcal{G}(M,M_0)).$

\begin{lemma} \label{fattree}
Let $G$ be an arbitrary finite, connected graph.  There exists a spanning subtree $T_{\mathrm{fat}} \subset G$ for which $\mathrm{diam}(T_{\mathrm{fat}}) \leq 2 \mathrm{diam} (G).$
\end{lemma}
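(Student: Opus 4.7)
The plan is to produce $T_{\mathrm{fat}}$ as a breadth-first search (BFS) spanning tree rooted at an arbitrary vertex $v_0 \in G$. The defining property of such a BFS tree is that for every vertex $v \in G$, the combinatorial distance from $v_0$ to $v$ inside the tree equals the graph distance in $G$:
\begin{equation*}
d_{T_{\mathrm{fat}}}(v_0, v) \; = \; d_G(v_0, v).
\end{equation*}
This can be shown by induction on $d_G(v_0,v)$: at each BFS layer, every newly discovered vertex is attached to a parent already sitting at one less than its $G$-distance from $v_0$, so the tree path to $v_0$ has the optimal length.

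Once this property is in hand, the diameter bound is immediate from the triangle inequality in $T_{\mathrm{fat}}$. For any two vertices $u,w$,
\begin{equation*}
d_{T_{\mathrm{fat}}}(u,w) \; \leq \; d_{T_{\mathrm{fat}}}(u,v_0) + d_{T_{\mathrm{fat}}}(v_0,w) \; = \; d_G(u,v_0) + d_G(v_0,w) \; \leq \; 2 \, \mathrm{diam}(G),
\end{equation*}
since each $G$-distance is at most the graph diameter. Taking the supremum over $u,w$ yields $\mathrm{diam}(T_{\mathrm{fat}}) \leq 2\,\mathrm{diam}(G)$, as required.

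There is essentially no obstacle here; the only minor care needed is establishing the distance-preservation property of BFS trees cleanly (by induction on BFS layers), and noting that $T_{\mathrm{fat}}$ is indeed a spanning tree because BFS on a finite connected graph visits every vertex exactly once and attaches each non-root vertex via exactly one parent edge. No hypothesis on $G$ beyond finiteness and connectedness is used, matching the statement of the lemma.
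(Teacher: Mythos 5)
Your argument is correct and essentially identical to the paper's: the paper constructs a shortest-path spanning tree rooted at a fixed vertex $v$ (by assigning each $w \neq v$ a parent $p_w$ at distance $d_G(w,v)-1$), which is the same object as your BFS tree, and then applies the same triangle-inequality estimate through the root. No substantive difference.
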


\begin{proof}
Fix a vertex $v \in G.$ A {\em shortest path subtree relative to $v$} is a spanning subtree, $T_{\mathrm{fat}}$, such that for all $w \in T_{\mathrm{fat}}, d_{T_{\mathrm{fat}}}(w,v) = d_G(w,v).$  There exists at least one  shortest path  subtree relative to $v.$   For the reader's convenience, we recall one such construction \cite[\S 3]{shortestpathtree}:  to every vertex $w  \in G\setminus\{v\},$ assign a neighboring vertex $p_w \in G$ for which $d_G(p_w,v) = d_G(w,v) - 1.$  The subgraph with full vertex set and edge set the edges connecting $w$ and $p_w$ for every $w \neq v$ is a shortest path tree relative to $v.$ 

In particular, for every $a,b \in T_{\mathrm{fat}},$
\begin{align*}
d_{T_{\mathrm{fat}}}(a,b) &\leq d_{T_{\mathrm{fat}}}(a,v) + d_{T_{\mathrm{fat}}}(v,b) \\
&= d_G(a,v) + d_G(v,b) \\
&\leq 2 \mathrm{diam}(G).
\end{align*}     
\end{proof}

\begin{define}
  Let $v$ be a vertex of $\mathcal{G}(M_0).$  Define the \emph{combinatorial ball of radius $r$ centered at $F_v$}, denoted $B_{\mathrm{comb},r}(F_v),$ to be
$$B_{\mathrm{comb},r}(F_v) = \bigcup_{d_{\mathcal{G}(M_0)}(w,v) \leq r} F_w \subset \mathbb{H}^n.$$
 Define the \emph{combinatorial sphere of radius $r$ centered at $F_v$}, denoted $S_{\mathrm{comb},r}(F_v),$ to be 
$$S_{\mathrm{comb},r}(F_v) := \partial B_{\mathrm{comb},r}(F_v).$$
\end{define}

The next lemma bounds $\mathrm{diam}(\mathcal{G}(M,M_0))$ above in terms of $\mathrm{diam}(M).$  

\begin{lemma} \label{diametercomparison}
Suppose that $B_{\mathrm{comb},1}(F_v)$ projects isometrically to $M,$ for all $v$. Let $r_0 := d(F_0, S_{\mathrm{comb},1}(F_0)).$  Let $k_0$ denote the number of $F_0$-tiles in $B_{\mathrm{comb},1}(F_0).$  Suppose that $a \in \mathrm{int}(F_{v_1}), b \in \mathrm{int}(F_{v_2}).$   Suppose $d(a,b) < r_0.$  Then    
$$d_{\mathcal{G}(M,M_0)}(v_1,v_2) \leq 2 k_0.$$

In particular, for every $p,q \in M,$ if $p \in F_{w_1}, q \in F_{w_2}$, then
$$d_{\mathcal{G}(M,M_0)}(w_1,w_2) \leq \frac{d(p,q)}{r_0} \cdot 2k_0 + 2k_0$$
and hence
$$\mathrm{diam}(\mathcal{G}(M,M_0)) \leq \frac{\mathrm{diam}(M)}{r_0} \cdot 2k_0 + 2k_0.$$
\end{lemma}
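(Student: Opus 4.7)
The plan is to establish the three inequalities in sequence: the first (and crucial) one by a lifting argument to $\mathbb{H}^n$, the second by concatenating many applications of the first along a minimizing geodesic from $p$ to $q$, and the third by taking a supremum.

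For the first inequality, fix $a \in \mathrm{int}(F_{v_1})$ and $b \in \mathrm{int}(F_{v_2})$ with $d_M(a,b) < r_0$, and let $\alpha$ be a minimizing geodesic in $M$ from $a$ to $b$. Choose a lift $\tilde a \in F_{\tilde v_1}$ of $a$ in $\mathbb{H}^n$, where $F_{\tilde v_1}$ projects to $F_{v_1}$, and lift $\alpha$ uniquely to a geodesic starting at $\tilde a$ whose endpoint $\tilde b$ satisfies $d_{\mathbb{H}^n}(\tilde a, \tilde b) = d_M(a, b) < r_0$. The crucial geometric observation is that
\[ B(\tilde a, r_0) \subseteq B_{\mathrm{comb},1}(F_{\tilde v_1}). \]
Indeed, if some $y \in B(\tilde a, r_0)$ lay outside $B_{\mathrm{comb}, 1}(F_{\tilde v_1})$, the straight-line geodesic from $\tilde a$ to $y$ would cross $S_{\mathrm{comb}, 1}(F_{\tilde v_1}) = \partial B_{\mathrm{comb}, 1}(F_{\tilde v_1})$ at some point $z$, yielding $d(z, F_{\tilde v_1}) \leq d(\tilde a, y) < r_0$ and contradicting the definition of $r_0$ (applied to $F_{\tilde v_1}$ via $\Gamma_0$-invariance of the tiling). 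Consequently $\tilde b$ lies in some tile $F_{\tilde v_2} \subseteq B_{\mathrm{comb}, 1}(F_{\tilde v_1})$. Since this combinatorial ball contains only $k_0$ tiles, $\tilde v_1$ and $\tilde v_2$ are joined by a path in the induced subgraph of length at most $k_0 - 1 < 2k_0$. The isometric projection hypothesis implies that the quotient map $\mathcal{G}(M_0) \to \mathcal{G}(M, M_0)$ is injective on vertices of $B_{\mathrm{comb}, 1}(F_{\tilde v_1})$ and preserves adjacency, hence $d_{\mathcal{G}(M, M_0)}(v_1, v_2) \leq 2k_0$.

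For the second inequality, subdivide a minimizing geodesic from $p$ to $q$ in $M$ into $N := \lceil d_M(p, q)/r_0 \rceil$ subarcs, each of length strictly less than $r_0$, with endpoints $p = x_0, x_1, \ldots, x_N = q$. After an arbitrarily small perturbation of each $x_i$---which at the endpoints modifies $w_1, w_2$ by at most a single adjacent tile, a cost absorbed by the additive $2k_0$ slack---we may arrange that $x_i \in \mathrm{int}(F_{u_i})$ for tiles $F_{u_i}$ with $u_0 = w_1$ and $u_N = w_2$. Applying the first inequality to each consecutive pair $(x_i, x_{i+1})$ and summing by the triangle inequality in $\mathcal{G}(M, M_0)$ yields
\[ d_{\mathcal{G}(M, M_0)}(w_1, w_2) \leq 2 k_0 N \leq 2 k_0 \left( \frac{d_M(p,q)}{r_0} + 1 \right) = \frac{d_M(p,q)}{r_0} \cdot 2k_0 + 2k_0, \]
which is the claimed bound. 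Taking the supremum over $p, q \in M$ produces the final diameter estimate.

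The main obstacle is the geometric containment $B(\tilde a, r_0) \subseteq B_{\mathrm{comb}, 1}(F_{\tilde v_1})$, where the definition of $r_0$ is really used; once this is in hand, everything else is bookkeeping about chaining short hops and perturbing off tile boundaries.
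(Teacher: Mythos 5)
Your proof is correct, and it takes a genuinely different route from the paper's for the crucial first inequality. The paper lifts the short geodesic $\alpha_{a,b}$ to $\mathbb{H}^n$, observes that it stays inside $B_{\mathrm{comb},1}(F_{\tilde v_1})$, and then \emph{counts intersection points} of $\alpha_{a,b}$ with codimension-1 faces: by convexity of the Dirichlet cells each tile is crossed at most twice, giving at most $2k_0$ face-crossings and hence $d_{\mathcal{G}(M,M_0)}(v_1,v_2)\leq 2k_0$. You instead establish the metric containment $B(\tilde a, r_0)\subseteq B_{\mathrm{comb},1}(F_{\tilde v_1})$, which immediately places $\tilde b$ in a tile of that combinatorial $1$-ball. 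This is cleaner: it avoids the convexity argument entirely, and it actually gives the sharper conclusion $d_{\mathcal{G}(M_0)}(\tilde v_1,\tilde v_2)\leq 1$ (by the very definition of $B_{\mathrm{comb},1}$), whereas you settle for the weaker bookkeeping bound $k_0-1<2k_0$; you could just cite the definition and skip the induced-subgraph-diameter step, and you also don't need injectivity of the quotient map --- the graph quotient $\mathcal{G}(M_0)\to\mathcal{G}(M,M_0)$ is always $1$-Lipschitz. Two small polish items: (i) in the chaining step, $N=\lceil d(p,q)/r_0\rceil$ gives subarcs of length exactly $r_0$ when $r_0$ divides $d(p,q)$, so either use $N=\lfloor d(p,q)/r_0\rfloor+1$ or introduce the $\epsilon$ as the paper does; (ii) the perturbation of the endpoints should simply move $p$ (resp.\ $q$) into $\mathrm{int}(F_{w_1})$ (resp.\ $\mathrm{int}(F_{w_2})$), which \emph{keeps} $w_1,w_2$ fixed --- there is no need to allow them to jump to an adjacent tile and then absorb a cost.
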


\begin{proof}
For all $v$, $B_{\mathrm{comb},1}(F_v)$ is isometric to $B_{\mathrm{comb},1}(F_0)$. Hence $r_0$ and $k_0$ are the same for all balls. 
Because $d(a,b) < r_0,$ the geodesic segment $\alpha_{a,b}$ intersects only those $F_0$-tiles of $M$ contained in $B_{\mathrm{comb},1}(F_{v_1})$  Perturbing $a,b$ as necessary, we may assume that $\alpha_{a,b}$ intersects only codimension-1 faces of the $F_0$-tiles.    

The combinatorial distance $d_{\mathcal{G}(M,M_0)}(v_1,v_2)$ is at most the number of intersection points between codimension 1 faces of tiles of $B_{\mathrm{comb},1}(F_v)$ and the segment $\alpha_{a,b}.$  By convexity, $\alpha_{a,b}$ intersects the boundary of every tile at most twice.  Therefore, 
$$d_{\mathcal{G}(M,M_0)}(v_1,v_2) \leq 2 k_0$$
as claimed.

For the second claim, divide a length minimizing geodesic from $p$ to $q$ into $m = \lfloor \frac{d(p,q)}{r_0 - \epsilon} \rfloor$ equal segments of length $r_0 - \epsilon$ together with one terminal segment of length $< r_0 -\epsilon.$  Applying the above argument to all $m+1$ segments gives

\begin{align*}
d_{\mathcal{G}(M,M_0)}(w_1,w_2) &\leq \left\lfloor \frac{d(p,q)}{r_0 - \epsilon} \right\rfloor \cdot 2k_0 + 2k_0 \\
&\leq \frac{d(p,q)}{r_0 - \epsilon} \cdot 2k_0 + 2k_0, \forall \epsilon>0,
\end{align*} 
and the result follows.
\end{proof}

\begin{corollary} \label{diameterboundtreetypefundamentaldomain}
There exists a spanning subtree $T \subset \mathcal{G}(M,M_0)$ for which
$$\mathrm{diam}(F_T) \leq \mathrm{diam}(F_0) \cdot \left(  \frac{4k_0}{r_0} \cdot \mathrm{diam}(M) + 4k_0 + 1 \right),$$
where $r_0$ and $k_0$ are the constants from Proposition \ref{diametercomparison}. 
\end{corollary}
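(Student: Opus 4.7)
The plan is to combine the three immediately preceding lemmas in sequence, since each one bounds a quantity in terms of the next: Lemma \ref{treetypediameterbound} bounds $\mathrm{diam}(F_T)$ in terms of the combinatorial diameter $\mathrm{diam}(T)$, Lemma \ref{fattree} produces a spanning tree whose diameter is controlled by that of the ambient graph, and Lemma \ref{diametercomparison} compares the combinatorial diameter of $\mathcal{G}(M,M_0)$ with the Riemannian diameter of $M$. No new geometric input should be required.

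Concretely, I would first apply Lemma \ref{fattree} to the Schreier graph $\mathcal{G}(M, M_0)$ to extract a spanning subtree $T \subset \mathcal{G}(M, M_0)$ satisfying
\[
\mathrm{diam}(T) \;\leq\; 2\,\mathrm{diam}(\mathcal{G}(M, M_0)).
\]
Then I would invoke Lemma \ref{diametercomparison} to obtain
\[
\mathrm{diam}(\mathcal{G}(M, M_0)) \;\leq\; \tfrac{2 k_0}{r_0}\,\mathrm{diam}(M) + 2 k_0,
\]
which, chained with the previous inequality, gives
\[
\mathrm{diam}(T) \;\leq\; \tfrac{4 k_0}{r_0}\,\mathrm{diam}(M) + 4 k_0.
\]

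Finally, I would insert this bound into Lemma \ref{treetypediameterbound}, yielding
\[
\mathrm{diam}(F_T) \;\leq\; \mathrm{diam}(F_0)\,\bigl(\mathrm{diam}(T) + 1\bigr) \;\leq\; \mathrm{diam}(F_0)\cdot\left(\tfrac{4 k_0}{r_0}\,\mathrm{diam}(M) + 4 k_0 + 1\right),
\]
which is precisely the claim. There is no genuine obstacle; the only thing to verify is that the hypothesis of Lemma \ref{diametercomparison} (that $B_{\mathrm{comb},1}(F_v)$ projects isometrically to $M$ for all $v$) is in force for the tiling under consideration, which is built into the standing setup of this subsection.
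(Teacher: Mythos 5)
Your argument is correct and is exactly the paper's own proof; the paper simply says ``combine Lemma \ref{fattree} and Lemma \ref{diametercomparison}'' (after taking $T=T_{\mathrm{fat}}$) and leaves the chaining through Lemma \ref{treetypediameterbound} implicit, while you spell out the three steps and the resulting arithmetic. The one caveat you flag---that the hypothesis of Lemma \ref{diametercomparison} (each $B_{\mathrm{comb},1}(F_v)$ projecting isometrically) must hold---is in fact not explicitly imposed as a standing hypothesis in the tree-type subsection, so it is a tacit assumption of the paper as well rather than something already secured; noting it is reasonable, but calling it ``built into the standing setup'' overstates what the text says.
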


\begin{proof}
Let $T = T_{\mathrm{fat}}$ be the spanning subtree from Lemma \ref{fattree}.  The Corollary follows upon combining Lemma \ref{fattree} and Lemma \ref{diametercomparison}.
\end{proof}

\subsubsection{Type 2: Dirichlet fundamental domains}
Let $N$ be a closed hyperbolic $n$-manifold with fundamental group $G = \pi_1(N)$ acting by deck transformations in $\mathbb{H}^n.$
\begin{define}
The \emph{Dirichlet fundamental domain domain $F$ associated to $N$ and $p_0 \in \mathbb{H}^n$} is
$F := \{ x \in \mathbb{H}^n : d(x,p_0) \leq d(x, \gamma p_0) \text{ for all } 1 \neq \gamma \in G \}.$
\end{define}

If $F$ is a Dirichlet domain for $N,$ the group $G$ is generated by the finite symmetric set
$$S = S_G := \{ \gamma \in G: \gamma F \cap F \neq \emptyset \}.$$

\begin{lemma} \label{upperbounddiameterdirichletdomain}
 There is an upper bound
$$\mathrm{diam}(F) \leq 2 \cdot \mathrm{diam}(M).$$ 
\end{lemma}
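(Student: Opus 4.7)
The plan is to exploit the defining property of the Dirichlet domain: for every $x \in F$, the element $1 \in G$ minimizes the distance from $x$ to the orbit $G \cdot p_0$. This will force each point of $F$ to lie within $\mathrm{diam}(M)$ of the center $p_0$, and the triangle inequality will then give the factor of $2$.

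More precisely, I would first show that for every $x \in F$, the projection $\pi : \mathbb{H}^n \to M$ satisfies
\[
d_{\mathbb{H}^n}(x, p_0) \;=\; d_M(\pi(x), \pi(p_0)).
\]
The inequality $d_M(\pi(x), \pi(p_0)) \leq d_{\mathbb{H}^n}(x, p_0)$ holds because $\pi$ is a local isometry. For the reverse direction, any length-minimizing path in $M$ from $\pi(x)$ to $\pi(p_0)$ lifts, starting at $x$, to a geodesic in $\mathbb{H}^n$ ending at some translate $\gamma p_0$. Hence $d_M(\pi(x), \pi(p_0)) = d_{\mathbb{H}^n}(x, \gamma p_0)$ for some $\gamma \in G$, and the Dirichlet condition $d_{\mathbb{H}^n}(x, p_0) \leq d_{\mathbb{H}^n}(x, \gamma p_0)$ yields the claim.

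Since $d_M(\pi(x), \pi(p_0)) \leq \mathrm{diam}(M)$ trivially, this gives $d_{\mathbb{H}^n}(x, p_0) \leq \mathrm{diam}(M)$ for every $x \in F$. The triangle inequality in $\mathbb{H}^n$ then yields, for any $x, y \in F$,
\[
d_{\mathbb{H}^n}(x, y) \;\leq\; d_{\mathbb{H}^n}(x, p_0) + d_{\mathbb{H}^n}(p_0, y) \;\leq\; 2 \mathrm{diam}(M),
\]
which is the desired bound.

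There is no substantive obstacle here; the only step requiring care is the identification $d_{\mathbb{H}^n}(x, p_0) = d_M(\pi(x), \pi(p_0))$, which uses path-lifting together with the defining inequality of $F$.
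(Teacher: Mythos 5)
Your argument is correct and is essentially the same as the paper's: both show $F \subset B_{\mathrm{diam}(M)}(p_0)$ by combining the Dirichlet minimality condition with the fact that any point of $M$ is within $\mathrm{diam}(M)$ of $\pi(p_0)$, then finish with the triangle inequality. The paper phrases this as a contrapositive (if $d(x,p_0) > \mathrm{diam}(M)$ then some translate $\gamma p_0$ is closer, so $x \notin F$), while you phrase it directly via the clean identity $d_{\mathbb{H}^n}(x,p_0) = d_M(\pi(x),\pi(p_0))$ for $x \in F$ — same idea, marginally tidier packaging.
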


\begin{proof}
\ Suppose $x \in \mathbb{H}^n$ satisfies $d(x,p_0) > \mathrm{diam}(M).$  The projection of $B_{\mathrm{diam}(M)}(x)$ to $M$ covers all of $M.$  Therefore, there is some $\gamma \in \Gamma$ for which $d(x,\gamma p_0) < \mathrm{diam}(M).$  But by assumption, $d(x,p_0) > \mathrm{diam}(M).$  Thus $x \notin F.$  
It follows that $F$ is contained in $B_{\mathrm{diam}(M)}(p_0),$ implying that $\mathrm{diam}(F) \leq 2 \mathrm{diam}(M).$  
\end{proof}

The following Proposition proves an upper bound on the number of codimension-1 faces of $F.$  Combined with the diameter upper bound from Lemma \ref{upperbounddiameterdirichletdomain}, this yields an upper bound for $\vol(\partial F).$

\begin{proposition} \label{volumeRneighborhood}
 $$\vol(\partial F) \leq E_n \cdot e^{(2n-3) \cdot 4 \mathrm{diam}(M)},$$
for some constant $E_n$ depending only on $n.$ 
\end{proposition}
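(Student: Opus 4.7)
The plan is a volume-packing argument that separately bounds (i) the number of codimension-$1$ faces of $F$ and (ii) the area of each such face, then multiplies the two estimates. Set $R := \mathrm{diam}(M)$. The proof of Lemma \ref{upperbounddiameterdirichletdomain} already shows $F \subset B_R(p_0)$. Hence every face-pairing element $\gamma \in S_G$ satisfies $d(p_0, \gamma p_0) \leq 2R$ (the sets $F \subset B_R(p_0)$ and $\gamma F \subset B_R(\gamma p_0)$ must overlap), which forces $\gamma F \subset B_{3R}(p_0)$. Since the translates $\{\gamma F : \gamma \in G\}$ tile $\mathbb{H}^n$ with essentially disjoint interiors, summing their volumes inside $B_{3R}(p_0)$ yields
\[
(|S_G|+1) \cdot \vol(M) \leq \vol(B_{3R}(p_0)).
\]

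To remove the dependence on $\vol(M)$, I would invoke the universal lower bound $v_n > 0$ on the volume of any closed hyperbolic $n$-manifold (Gauss--Bonnet for $n=2$; the Gabai--Meyerhoff--Milley theorem for $n=3$; Kazhdan--Margulis together with Wang finiteness for $n \geq 4$). This upgrades the previous estimate to $|S_G| \leq v_n^{-1} \vol(B_{3R})$. For the face area, observe that each face lies on a bisecting hyperplane $H$ and is contained in $F \subset B_R(p_0)$. The hyperbolic Pythagorean theorem identifies $H \cap B_R(p_0)$ with a hyperbolic $(n{-}1)$-ball of radius at most $R$ (centered at the foot of the perpendicular from $p_0$ to $H$), so the $(n{-}1)$-volume of every face is at most $\vol(B_R^{n-1})$.

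Combining the two bounds and using the standard estimate $\vol(B_r^k) \leq c_k \, e^{(k-1)r}$ (valid for all $r \geq 0$ after absorbing the small-$r$ polynomial behaviour into $c_k$) gives
\[
\vol(\partial F) \leq |S_G| \cdot \vol(B_R^{n-1}) \leq E_n' \cdot e^{[3(n-1)+(n-2)] R} = E_n' \cdot e^{(4n-5) R},
\]
which is comfortably dominated by $E_n \cdot e^{(2n-3)\cdot 4R}$ for a suitable $E_n$, since $4n-5 \leq 4(2n-3)$ whenever $n \geq 2$. The principal obstacle is conceptual rather than computational: the estimate must be uniform in $M$, which is precisely why the universal volume lower bound for closed hyperbolic $n$-manifolds is essential. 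Given that input, the remainder reduces to textbook hyperbolic geometry.
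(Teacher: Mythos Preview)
Your proof is correct and follows the same two–step architecture as the paper: bound the number of codimension-$1$ faces by a packing argument, bound the area of each face by containment in an $(n-1)$–ball of radius comparable to $\mathrm{diam}(M)$, then multiply. The difference lies in the packing step. The paper counts lattice points $\gamma p_0$ with $d(p_0,\gamma p_0)\le 2\,\mathrm{diam}(F)$ by placing disjoint balls of radius $\tfrac12\,\mathrm{inj}(M)$ around them, obtaining
\[
\#\{\text{faces}\}\ \le\ \frac{\vol\bigl(B_{2\,\mathrm{diam}(F)+\frac12\mathrm{inj}(M)}\bigr)}{\vol\bigl(B_{\frac12\mathrm{inj}(M)}\bigr)},
\]
whereas you pack the translates $\gamma F$ themselves inside $B_{3R}(p_0)$ and remove the resulting $\vol(M)$ dependence via the Kazhdan--Margulis universal volume lower bound $v_n$. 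Your route has the advantage that the final constant genuinely depends only on $n$, exactly as the statement asserts; the paper's constant $D_n$, as written, tacitly depends on a lower bound for $\mathrm{inj}(M)$ (harmless in context since $\mathrm{inj}(M)\ge\mathrm{inj}(M_0)$, but not literally what is claimed). The cost is that you invoke a nontrivial structure theorem, while the paper's argument is entirely elementary. Your exponent $(4n-5)R$ is in fact sharper than the target $4(2n-3)R$. One small quibble: Wang finiteness is not the right citation for the volume lower bound in high dimensions—Kazhdan--Margulis (or the Margulis lemma giving a thick point) is what you need, and you do cite it.
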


\begin{proof}
The cells of $\partial F$ are formed by intersections of bisectors: $B_{\gamma} := \{x: d(x,p_0) = d(x, \gamma p_0) \}.$  However, $d(x,p_0) \leq \mathrm{diam} F$ for every $x \in B_{\gamma}.$  Therefore, if the bisector $B_{\gamma}$ intersects $F,$ we must have 
$$d(p_0, \gamma p_0) \leq d(p_0, x) + d(x, \gamma p_0) =2d(x,p_0) \leq 2 \mathrm{diam}(F).$$
Therefore, 
\begin{align*}
\# (n-1)\text{-dimensional faces of } \partial F &\leq \# \{ \gamma \in \Gamma: d(p_0, \gamma p_0) \leq 2 \mathrm{diam} F \} \\
&\leq \frac{\vol(B_{2 \mathrm{diam}(F) + \frac{1}{2} \mathrm{inj(M)}})}{\vol(B_{\frac{1}{2} \mathrm{inj}(M)}) } \\
&\leq D_n e^{(n-1)2 \mathrm{diam}(F)}. 
\end{align*}
%
%The second line above follows because the balls of radius $\frac{1}{2} \mathrm{inj}(M)$ centered at all points $\gamma p_0$ satisfying $d(p_0, \gamma p_0)\leq 2 \mathrm{diam} F$ are disjoint and all lie within the ball of radius $2 \mathrm{diam} F + \frac{1}{2} \mathrm{inj}(M)$ centred at $p_0.$  The third line above follows because $\vol(B_R) \sim c_ne^{(n-1)R},$ for $R$ large. 
%
Hence
\begin{align} \label{firstvolumeupperbound}
\vol(\partial F) &\leq \# (n-1)\text{-dimensional faces of } \partial F \cdot \max_{C = \text{ codimension-}1 \text{ face of } \partial F} \vol(C) \nonumber \\
&\leq D_n e^{(n-1)2 \mathrm{diam}(F)} \cdot \vol (B_{n-1}(2\mathrm{diam} F)) \nonumber \\
&\leq E_n \cdot e^{(n-1) 2 \mathrm{diam}(F)} \cdot e^{(n-2) 2\mathrm{diam} F} \nonumber \\
&\leq E_n \cdot e^{(2n-3) \cdot 4 \mathrm{diam}(M)},
\end{align}
where the last line follows from Proposition \ref{upperbounddiameterdirichletdomain}.
\end{proof}

\begin{remark}
There is an upper bound for the diameter of a closed hyperbolic $n$-manifold $M$ of $\frac{1}{\lambda_1^0(M)} \log \vol(M).$  So in everything that preceded, upper bounds of $\exp( \mathrm{diam} (M))$ may all be replaced by $\vol(M)^{1 / \lambda_1^0(M)}.$  
%\textcolor{red}{If $M$ is tempered (means? \textcolor{blue}{$\frac{1}{\lambda_1^0(M)} \leq \frac{1}{\lambda_1^0(\mathbb{H}^n)}$}) (which is ``generic" in some sense), then $\frac{1}{\lambda_1^0} \leq \frac{4}{(n-1)^2}.$  So for large $n,$ this typically beats the boundary volume bound for tree-type fundamental domain.}
\end{remark}

\section{Almost-primitives and regulators when $b_1(M) > 0$} \label{almostprimitivesandregulators}
Theorem \ref{bodyupperboundpositivebettinumber} proves a general upper bound for $\frac{1}{\lambda_1^1(M)_{d^\ast}},$ where $M$ is a closed hyperbolic $n$-manifold, in terms of stable area of an explicit subset of $\Gamma = \pi_1(M)$ whose projection to $H_1(M,\mathbb{Q})$ is trivial.  The key is to bound the ``period integrals" of a 1-form $f$ in the image of $d^\ast$ with smallest positive eigenvalue.  In the notation of Theorem \ref{bodyupperboundpositivebettinumber}, we bound the period integral over $\gamma,$ where $\gamma$-pairs two faces of a fundamental domain for $M,$ in terms of the stable area of $\gamma' = \gamma \cdot \gamma_1^{-n_1(\gamma)} \cdots \gamma_k^{-n_k(\gamma)}.$  

However, if $\gamma'$ is long, then its stable area should be bounded below below by $\text{constant} \cdot \ell / \log \ell,$ where $\ell = |n_1(\gamma)| + \cdots + |n_k(\gamma)|,$ with very high probability (cf. \cite[Conjecture A.10]{CW}).  It is thus imperative that we prove good upper bounds on the integers $n_i(\gamma),$ which are intersection numbers between $\gamma$ and surfaces $S_i$ generating $H_2(M,\mathbb{Z}).$  When $n=3,$ Proposition \ref{boundedgeometrysurface} uses known facts about minimal surfaces in hyperbolic 3-manifolds to represent every $S_i$ by homologous surfaces with ``bounded geometry."  Proposition \ref{boundingintersectionnumbers} bounds the intersection numbers $S_i \cap \gamma$ above in terms of $A',$ an upper bound for the minimal area representatives of every class $S_i.$  

When $n = 3$ and $b_1(M) = 1,$  Proposition \ref{regulatorindependentbounds} esimates the damping factor $\frac{||f||_2}{(||f||_2^2 + ||h||_2^2)^{1/2}}$ occuring in Theorem \ref{bodyupperboundpositivebettinumber}.  The upshot: this damping factor is smaller than the inverse of the translation length of $\gamma'.$  As a result, Proposition \ref{regulatorindependentbounds} proves ``regulator-independent" upper bounds on $\frac{1}{\lambda_1^1(M)_{d^\ast}},$ i.e. upper bounds in terms of $\mathrm{sArea}(\gamma') / \ell(\gamma'),$ for an explicit finite collection of $\gamma' \in \Gamma,$ which is independent of $A'.$   

\begin{remark}
Though Proposition \ref{regulatorindependentbounds} is proven only when $n = 3$ and $b_1(M) = 1,$ ``regulator-independent" upper bounds on $\frac{1}{\lambda_1^1(M)_{d^\ast}}$ are to be expected in general.  Indeed, the Cheeger-M\"{u}ller Theorem suggests that small eigenvalues and regulators suppress each other.  See \S \ref{motivation} for further discussion.
\end{remark}

\subsection{General upper bounds on $\frac{1}{\lambda_1^1(M)_{d^\ast}}$ when $b_1(M) > 0$}

Let $M \rightarrow M_0$ be a finite cover of the closed hyperbolic $n$-manifold $M_0.$   Suppose that $H_1(M,\mathbb{Z}) = \mathbb{Z} \langle \gamma_1 \rangle \oplus \cdots \oplus \mathbb{Z} \langle \gamma_k \rangle \oplus \mathrm{finite}.$  We may take $\gamma_1, \ldots, \gamma_k\subset P_F$ for some fundamental domain $F$ of $M$ in $\mathbb{H}^n$; we take this fundamental domain to be either a tree-type domain or a Dirichlet domain in the sense of \S \ref{geometrytwotypesfundamentaldomains}.  By Corollary \ref{diameterboundtreetypefundamentaldomain} in the case of tree-type fundamental domains and Lemma \ref{upperbounddiameterdirichletdomain} in the case of Dirichlet fundamental domains, 
$$\ell(\gamma_i) \leq \begin{cases} \mathrm{diam}(F_0) \cdot \left(  \frac{4k_0}{r_0} \cdot \mathrm{diam}(M) + 4k_0 + 1 \right) & \text{ if the fundamental domain is tree-type } \\ 2 \cdot \mathrm{diam}(M) &\text{ if  the fundamental domain is Dirichlet}  \end{cases}$$
for every $i.$

\begin{proposition} \label{subtractingharmonicpart}
Let $n_i = n_i(\gamma) \in \mathbb{Z}$ be the unique integers for which $\gamma - n_1 \gamma_1 - \cdots - n_k \gamma_k$ is torsion in $H_1(M,\mathbb{Z}).$

Suppose $f$ is a 1-form on $M$ satisfying $d^\ast f = 0.$  Fix a base point  $q_0 \in M.$  Let $h$ be a harmonic 1-form satisfying 
$$\int_{\alpha_{q_0,\gamma_i q_0}} f = \int_{\alpha_{q_0, \gamma_i q_0}} h \text{ for } i = 1,\ldots,k.$$

   Then
\begin{equation*}
\left| \int_{\alpha_{q_0, \gamma q_0}} (f-h) \right| \leq||df||_{\infty} \cdot \left(  \mathrm{sArea}(\alpha_{q_0, \gamma \cdot \gamma_1^{-n_1} \cdots \gamma_k^{-n_k} q_0}) + 5 b_1(M) \pi \right)
\end{equation*}
\end{proposition}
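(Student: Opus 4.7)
The plan is to convert the arc integral $\int_{\alpha_{q_0,\gamma q_0}}(f-h)$ into an integral of $df$ over a 2-chain via Stokes' theorem, and then to bound that 2-chain's area in terms of $\mathrm{sArea}(\gamma')$ plus controlled corrections, where $\gamma' := \gamma\cdot\gamma_1^{-n_1}\cdots\gamma_k^{-n_k}$.

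For each $\delta\in\Gamma$, let $\bar L_\delta$ denote the based loop at $q_0$ in $M$ obtained by projecting $\alpha_{q_0,\delta q_0}$. The hypothesis says $\int_{\bar L_{\gamma_i}}(f-h) = 0$ for every $i$, so the $1$-cycle
$$L \;:=\; \bar L_\gamma \;-\; \sum_{i=1}^{k} n_i\, \bar L_{\gamma_i}$$
satisfies $\int_L(f-h) = \int_{\alpha_{q_0,\gamma q_0}}(f-h)$. By the defining property of the $n_i$, the class $[L] = [\gamma] - \sum_i n_i[\gamma_i]$ is torsion in $H_1(M,\mathbb{Z})$, so $mL = \partial S$ for some positive integer $m$ and 2-chain $S$ in $M$. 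Since $dh = 0$, Stokes' theorem gives $m\,|\!\int_L(f-h)| = |\!\int_S df| \leq \mathrm{area}(S)\,\|df\|_\infty$, and taking the infimum over admissible $(m,S)$ yields
$$\Big|\int_{\alpha_{q_0,\gamma q_0}}(f-h)\Big| \;\leq\; \mathrm{sArea}(L)\cdot\|df\|_\infty.$$

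It remains to prove $\mathrm{sArea}(L) \leq \mathrm{sArea}(\gamma') + 5\,b_1(M)\,\pi$. Since $L$ and $\bar L_{\gamma'}$ represent the same torsion class, $L - \bar L_{\gamma'} = \partial C$ for some 2-chain $C\subset M$, so $\mathrm{sArea}(L)\leq \mathrm{sArea}(\gamma')+\mathrm{area}(C)$. I would build $C$ from the word identity $\gamma=\gamma'\cdot\gamma_k^{n_k}\cdots\gamma_1^{n_1}$ by iterated filling of geodesic triangles in $\mathbb{H}^n$, each of area at most $\pi$ (as used in the proofs of Lemma \ref{almostprimitive1formhyperbolicspace} and Proposition \ref{newalmostprimitive}), and projecting to $M$. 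A single triangle with vertices $q_0,\gamma' q_0,\gamma q_0$ peels $\gamma'$ off $\gamma$, and $k-1$ further triangles split the remaining product into the factors $\bar L_{\gamma_i^{n_i}}$, contributing at most $k\pi$ in area.

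The main obstacle is reconciling, for each $i$, the single-geodesic loop $\bar L_{\gamma_i^{n_i}}$ produced by the above splitting with the formal combination $n_i\bar L_{\gamma_i}$ appearing in $L$, and doing so \emph{independently of} $|n_i|$. A naive fan triangulation from $q_0$ to the intermediate points $\gamma_i^j q_0$ produces $|n_i|-1$ Stokes triangles, which would force $\mathrm{area}(C)$ to grow linearly with $|n_i|$. The resolution must use the hypothesis $\int_{\bar L_{\gamma_i}}(f-h)=0$ together with a more efficient interpolation, so that the bulk of the fan triangles cancel upon passing from $L$ to its multiples $mL$ in the stable-area limit, leaving at most a bounded number of Stokes triangles per generator $\gamma_i$. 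Carrying out this accounting carefully should yield $\mathrm{area}(C)\leq 5\,b_1(M)\,\pi$ and complete the proof; this last step is the principal technical difficulty of the proposition.
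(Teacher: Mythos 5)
Your proposal takes essentially the same route as the paper (Stokes over geodesic-triangle fillings), and it correctly pinpoints the crux: the $n_i$-dependence. But you leave exactly that step unresolved, calling it ``the principal technical difficulty,'' so the proposal is incomplete as written.

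Here is how the paper closes the gap, with an exact accounting of which triangles appear. Write $\gamma' = \gamma\gamma_1^{-n_1}\cdots\gamma_k^{-n_k}$ and $\eta_\ell = \gamma_{k-\ell+1}^{n_{k-\ell+1}}\cdots\gamma_k^{n_k}$. First, a single geodesic triangle with vertices $q_0,\ \gamma_1^{n_1}\cdots\gamma_k^{n_k}q_0,\ \gamma q_0$ (cost $\leq\pi$) reduces $\int_{\alpha_{q_0,\gamma q_0}}(f-h)$ to $\int_{\alpha_{q_0,\gamma_1^{n_1}\cdots\gamma_k^{n_k}q_0}}(f-h)$ plus a term controlled by $\mathrm{sArea}(\gamma')$. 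Second, a broken geodesic $(k+1)$-gon through $q_0,\gamma_k^{n_k}q_0,\gamma_{k-1}^{n_{k-1}}\gamma_k^{n_k}q_0,\ldots$, filled by $k-1$ triangles (cost $\leq(k-1)\pi$), replaces that integral by the sum $\sum_i\bigl|\int_{\alpha_{q_0,\ \eta_i^{-1}\gamma_{k-i}^{n_{k-i}}\eta_i q_0}}(f-h)\bigr|$. Third, a conjugation lemma (one triangle $\Delta'$ plus one broken pentagon $P'$, cost $\leq 4\pi$ per factor) removes each $\eta_i$, leaving $\sum_i\bigl|\int_{\alpha_{q_0,\gamma_{k-i}^{n_{k-i}}q_0}}(f-h)\bigr|$. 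The total overhead is $\pi+(k-1)\pi+4k\pi=5k\pi=5b_1(M)\pi$. This is your ``more efficient interpolation'': only $O(k)$ triangles, with no dependence on $|n_i|$.

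The point you were missing is that after this reduction no fan is needed at all: the paper writes $\int_{\alpha_{q_0,\gamma_j^{n_j}q_0}}(f-h) = n_j\int_{\alpha_{q_0,\gamma_jq_0}}(f-h) = 0$, using the hypothesis on $h$ exactly. For this to hold with no Stokes correction (since $f$ is not closed), one must read $\bar\alpha_{q_0,\gamma_j^{n_j}q_0}$ as the $n_j$-fold concatenation of $\bar\alpha_{q_0,\gamma_jq_0}$ rather than the single geodesic segment --- the concatenated path's period is literally $n_j$ times the base period, and the hypothesis makes the base period vanish. This is precisely the step you were groping toward: the matching condition is used not to reduce the number of fan triangles but to eliminate the fan entirely, because the term it would correct is already exactly zero.

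So: same strategy, but you stopped short of noticing that, after conjugating out the $\eta_i$'s, the remaining ``$n_j$-fold'' period is killed on the nose by the defining property of $h$, with only the conjugation triangles left to pay for.
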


\begin{proof}
Suppose that $\alpha_{q_0, \left(\gamma  \cdot \prod_{i = 1}^k \gamma_i^{- n_i} \right)^m q_0}$ bounds $S.$  The geodesic triangle $\Delta$ with positively oriented vertex set $q_0, \gamma_1^{n_1} \cdots \gamma_k^{n_k} q_0, \gamma q_0$ has oriented boundary $\alpha_{q, \gamma_1^{n_1} \cdots \gamma_k^{n_k} q_0} + \alpha_{\gamma_1^{n_1} \cdots \gamma_k^{n_k} q_0, \gamma q_0 } + \alpha_{\gamma q_0, q_0}.$  By Stokes,

\begin{align} \label{subtractingtorsionfreepart}
\left| \int_{\alpha_{q_0, \gamma q_0}} (f-h) \right| &\leq \left| \int_{\alpha_{q_0, \gamma_1^{n_1} \cdots \gamma_k^{n_k} q_0}} (f-h) \right| + \left| \int_{\alpha_{\gamma_1^{n_1} \cdots \gamma_k^{n_k} q_0, \gamma q_0 }} (f-h) \right| + \left| \int_\Delta d(f-h) \right| \nonumber \\
&=  \left| \int_{\alpha_{q_0, \gamma_1^{n_1} \cdots \gamma_k^{n_k} q_0}} (f-h) \right| + \left| \int_{\alpha_{q_0, \gamma_k^{-n_k} \cdots \gamma_1^{-n_1}\gamma q_0 }} (f-h) \right| + \left| \int_\Delta df \right| \nonumber \\
&=  \left| \int_{\alpha_{q_0, \gamma_1^{n_1} \cdots \gamma_k^{n_k} q_0}} (f-h) \right| + \frac{1}{m} \left| \int_{\alpha_{q_0, \left(\gamma_k^{-n_k} \cdots \gamma_1^{-n_1}\gamma \right)^m q_0 }} (f-h) \right| + \left| \int_\Delta df \right| \nonumber \\
&=  \left| \int_{\alpha_{q_0, \gamma_1^{n_1} \cdots \gamma_k^{n_k} q_0}} (f-h) \right| + \frac{1}{m} \left| \int_S d(f-h) \right| + \left| \int_\Delta df \right| \nonumber \\
&\leq  \left| \int_{\alpha_{q_0, \gamma_1^{n_1} \cdots \gamma_k^{n_k} q_0}} (f-h) \right| + || df ||_{\infty} \cdot \left( \frac{\mathrm{area}(S)}{m} + \mathrm{area}(\Delta) \right). 
\end{align}

As \eqref{subtractingtorsionfreepart} holds for arbitrary $m,S$ for which $\alpha_{q_0, \left( \gamma \gamma_1^{-n_1} \cdot \gamma_k^{-n_k} \right)^m}$ bounds $S,$ it follows that

\begin{equation} \label{firstupperboundintermsofsarea}
\left| \int_{\alpha_{q_0, \gamma q_0}} (f-h) \right| \leq \left| \int_{\alpha_{q_0, \gamma_1^{n_1} \cdots \gamma_k^{n_k} q_0}} (f-h) \right| + ||df||_{\infty} \cdot \left( \mathrm{sArea}(\alpha_{q_0,\gamma \cdot \gamma_1^{-n_1} \cdots \gamma_k^{-n_k} q_0}) + \pi \right).
\end{equation}

Let $$\eta_{\ell} = \gamma_{k-l+1}^{n_{k-l+1}} \cdots \gamma_{k-1}^{n_{k-1}}\gamma_k^{n_k}.$$
 %$\eta_{\ell} =\prod_{i = 0}^{\ell - 1} \gamma_{k - i}^{n_{k-i}}.$ 
    The geodesic segments $\alpha_{q_0, \gamma_k^{n_k} q_0}, \alpha_{\eta_1 q_0, \gamma_{k-1}^{n_{k-1}} \eta_1 q_0}, \ldots, \alpha_{\eta_{k-1} q_0, \gamma_1^{n_1} \eta_{k-1} q_0}, \alpha_{\gamma_1^{n_1} \cdots \gamma_k^{n_k} q_0, q_0}$ form the oriented boundary of a ``broken  geodesic $k+1$-gon" $P$, the union of $k-1$ geodesic triangles meeting at kinks.  This broken polygon has area at most $(k-1) \pi.$  By Stokes,

\begin{align} \label{brokenpolygon}
\left| \int_{\alpha_{q_0, \gamma_1^{n_1} \cdots \gamma_k^{n_k} q_0}} (f-h) \right| &\leq  \sum_i \left| \int_{\alpha_{\eta_i q_0, \gamma_{k-i}^{n_{k-i}} \eta_i q_0}} (f-h)  \right| + \left|\int_P d(f-h) \right| \nonumber \\ 
&=   \sum_i \left| \int_{\alpha_{q_0, \eta_i^{-1} \gamma_{k-i}^{n_{k-i}} \eta_i q_0}} (f-h)  \right| + \left| \int_P df \right| \nonumber \\
&\leq \sum_i \left| \int_{\alpha_{q_0, \eta_i^{-1} \gamma_{k-i}^{n_{k-i}} \eta_i q_0}} (f-h)  \right| + ||df||_{\infty} \cdot (k-1) \pi.
\end{align}

For arbitrary $a,b \in \Gamma,$ the geodesic segments $\alpha_{q_0, a^{-1} ba q_0} + \alpha_{a^{-1} b a q_0, b q_0} + \alpha_{b q_0, q_0}$ form the oriented boundary of a geodesic triangle $\Delta'.$  By Stokes,

\begin{align} \label{relatingconjugates}
\left| \int_{\alpha_{q_0, a^{-1} ba q_0}} (f-h) \right| &\leq \left|\int_{\alpha_{q_0,b q_0}} (f-h) \right| + \left| \int_{\alpha_{a^{-1} b a q_0, b q_0} } (f-h) \right| + \left|\int_{\Delta'} d(f-h) \right| \nonumber \\ 
&=  \left|\int_{\alpha_{q_0,b q_0}} (f-h) \right| + \left| \int_{\alpha_{p_0, [b,a^{-1}] p_0}} (f-h) \right| + \left| \int_{\Delta'} df \right| \text{ where } p_0 = a^{-1} ba q_0 \nonumber \\
\end{align}

Next, consider the oriented broken geodesic  $\alpha_{p_0, a^{-1}b^{-1}ap_0} + \alpha_{a^{-1}b^{-1}ap_0, b^{-1} ap_0} + \alpha_{b^{-1} a p_0,  a p_0} + \alpha_{ a p_0, [b,a^{-1}] p_0}+ \alpha_{  [b,a^{-1}] p_0,p_0}.$   It is the boundary of a broken geodesic pentagon $ P' $ which projects to a surface in $M$ with boundary $ - \alpha_{p_0, [b,a^{-1}] q_0}.$  Therefore,

\begin{align*}
\left| \int_{\alpha_{p_0, [b,a^{-1}] p_0}} (f-h) \right| &\leq \left| \int_{P'} df \right|  .\\
\end{align*}

Substituting back into \eqref{relatingconjugates} gives

\begin{align} \label{finalrelatingconjugates}
\left| \int_{\alpha_{q_0, a^{-1} ba q_0}} (f-h) \right| &\leq \left|\int_{\alpha_{q_0,b q_0}} (f-h) \right| + \left| \int_{P'} df \right| + \left| \int_{\Delta'} df \right| \nonumber \\ 
&\leq  \left|\int_{\alpha_{q_0,b q_0}} (f-h) \right| + ||df||_{\infty} \cdot \left( \mathrm{area}(P') + \mathrm{area}(\Delta') \right) \nonumber \\
&\leq   \left|\int_{\alpha_{q_0,b q_0}} (f-h) \right| + ||df||_{\infty} \cdot 4\pi. 
\end{align}

Substituting \eqref{finalrelatingconjugates} back into \eqref{brokenpolygon} gives

\begin{align} \label{firstsummand}
\left| \int_{\alpha_{q_0, \gamma_1^{n_1} \cdots \gamma_k^{n_k} q_0}} (f-h) \right| &\leq \sum_i  \left( \left| \int_{\alpha_{q_0, \gamma_{k-i}^{n_{k-i}} q_0}} (f-h) \right| + ||df||_{\infty} \cdot 4\pi \right) + ||df||_{\infty} \cdot (k-1)\pi \nonumber \\
&= \sum_i n_{k-i} \left|  \int_{\alpha_{q_0, \gamma_{k-i} q_0}} (f-h) \right| + ||df||_{\infty} \cdot (5k - 1) \pi \nonumber \\
&= \sum_i n_{k-i} \cdot 0 + ||df||_{\infty} \cdot (5k-1) \pi \nonumber \\
&= ||df||_{\infty} \cdot (5k-1) \pi.
\end{align}

Finally, substituting \eqref{firstsummand} back into \eqref{firstupperboundintermsofsarea} gives

\begin{align*}
\left| \int_{\alpha_{q_0, \gamma q_0}} (f-h) \right| &\leq ||df||_{\infty} \cdot (5k-1)\pi + ||df||_{\infty} \cdot \left( \mathrm{sArea}(\gamma \cdot \gamma_1^{-n_1} \cdots \gamma_k^{-n_k}) + \pi \right) \\
&= ||df||_{\infty} \cdot \left(  \mathrm{sArea}(\alpha_{q_0, \gamma \cdot \gamma_1^{-n_1} \cdots \gamma_k^{-n_k} q_0}) + 5 b_1(M) \pi \right)
\end{align*}
\end{proof}

\begin{proposition} \label{lowerboundeigenvaluepositivebettinumber}
Notation as in Corollary \ref{lowerboundeigenvalue}.  Let $M_0$ be a closed hyperbolic $n$-manifold.  Let $M \rightarrow M_0$ be a finite cover.  Suppose $H_1(M,\mathbb{Z}) = \mathbb{Z} \langle \gamma_1 \rangle \oplus \cdots \oplus \mathbb{Z} \langle \gamma_k \rangle \oplus \mathrm{finite}.$  For every $\gamma \in \Gamma,$ let $n_i(\gamma) \in \mathbb{Z}$ be the unique integers for which $\gamma - \sum_{i = 1}^k n_i(\gamma) \gamma_i \in H_1(M,\mathbb{Z})$ has finite order.  Fix a basepoint $q_0 \in M.$

Suppose $f$ is a 1-form on $M$ contained in the image of $d^\ast.$  Suppose that $h$ is a harmonic 1-form satisfying
$$\int_{\alpha_{q_0, \gamma_i q_0}} h = \int_{\alpha_{q_0, \gamma_i q_0}} f \text{ for } i = 1, \ldots, k.$$
Then

\begin{align*}
 \frac{1}{\sqrt{\lambda}} &\leq 3\pi\cdot \vol(\p F) \cdot C( \lambda)^2  +   C( \lambda) \cdot \frac{1}{2} \cdot \sqrt{\vol(M)}\nonumber \\
&+  C(  \lambda)^2 \cdot \frac{||f||_2}{ \left( ||f||_2^2 + ||h||_2^2 \right)^{1/2} } \cdot \left(  \sup_j  \mathrm{sArea}\left( \alpha_{q_0, \gamma_j \cdot \gamma_1^{-n_1(\gamma_j)} \cdots \gamma_k^{-n_k(\gamma_j)} q_0}  \right) + (5b_1(M) + 2) \pi  \right)  . \nonumber\\
\end{align*}

%\begin{align*}
% \frac{1}{\sqrt{\lambda}} &\leq  \vol(\p F) \cdot C( \lambda)^2 \cdot 3\pi \nonumber +   C(\lambda) \cdot (n+1) \cdot \sqrt{\vol(M)}\\
%&+  C(  \lambda)^2 \cdot \frac{||f||_2}{ \left( ||f||_2^2 + ||h||_2^2 \right)^{1/2} } \cdot \left(  \sup_j  \mathrm{sArea}\left( \alpha_{q_0, \gamma_j \cdot \gamma_1^{-n_1(\gamma_j)} \cdots \gamma_k^{-n_k(\gamma_j)} q_0}  \right) + (5b_1(M) + 2) \pi  \right)    . \nonumber\\
%\end{align*} 

%where $C( \lambda)$ is the Sobolev constant from Proposition \ref{supnormeigenfunction}.

%\begin{align*}
%\left( ||f||_2^2 + ||h||_2^2 \right)^{1/2}&\leq \sum_{\gamma \in \Gamma: \gamma F \cap U_F \neq \emptyset} \| d\eta(\gamma \cdot) \|_{\infty} \cdot \vol(M) \cdot \| df \|_{\infty} \cdot (\pi + \mathrm{sArea}\left(\gamma - \sum_{i = 1}^k n_i(\gamma) \gamma_i \right)). \\
%\end{align*}
%In particular, suppose $f$ lies in the image of $d^\ast$ and satisfies $||f||_2=1$ and $\Delta f = \lambda f.$ 
%Then
%\begin{align} \label{upperboundformula2}
%\frac{1}{\sqrt{\lambda}} \cdot (1 + ||h||^2)^{1/2} &\leq \sum_{\gamma \in \Gamma: \gamma F \cap U_F \neq \emptyset} \| d\eta(\gamma \cdot) \|_{\infty} \cdot \vol(M) \cdot C(n,1,inj(M),\lambda) \cdot (\pi + \mathrm{sArea}\left(\gamma - \sum_{i = 1}^k n_i(\gamma) \gamma_i \right)). \nonumber \\
%\end{align}
\end{proposition}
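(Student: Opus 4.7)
The plan is to parallel the argument of Corollary \ref{lowerboundeigenvalue} after setting $g := f - h$. Because $h$ is harmonic and $f \in \mathrm{im}(d^{\ast})$, one has $d^{\ast}g = 0$ and $dg = df$, so Proposition \ref{newalmostprimitive} applies verbatim to $g$ and yields
\begin{equation*}
\|g\|_2^2 \leq \vol(\partial F)\, \|g\|_{\infty} \left( 3\pi \|df\|_{\infty} + \sup_j \left| \int_{\alpha_{q_j,\gamma_j q_j}} g \right| \right) + \tfrac{5}{2} \|df\|_{\infty}\, \|g\|_2\, \sqrt{\vol(M)}.
\end{equation*}
Hodge orthogonality between harmonic forms and $\mathrm{im}(d^{\ast})$ forces $\langle f, h \rangle = 0$, whence $\|g\|_2^2 = \|f\|_2^2 + \|h\|_2^2$. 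By the hypothesis on $h$, the periods of $g$ over the generators $\gamma_1,\ldots,\gamma_k$ of the free part of $H_1(M,\mathbb{Z})$ all vanish, which is exactly the hypothesis required to invoke Proposition \ref{subtractingharmonicpart} for $g$ rather than $f$.

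The next step is to bound the three types of quantities appearing on the right. Applying Proposition \ref{supnormeigenfunction} to $f$ (eigenvalue $\lambda$) and $h$ (eigenvalue $0$), together with monotonicity of the constant $C(\cdot)$ in the eigenvalue (manifest from the explicit product formula), gives $\|g\|_{\infty} \leq C(\lambda)(\|f\|_2 + \|h\|_2) \leq \sqrt{2}\, C(\lambda)\, \|g\|_2$. Integration by parts together with $d^{\ast}f = 0$ and $\Delta f = \lambda f$ gives $\|df\|_2^2 = \lambda\|f\|_2^2$, and the generalized form of Proposition \ref{supnormeigenfunction} (applied to the $2$-form eigenform $df$ via the Bochner formula) yields $\|df\|_{\infty} \leq C(\lambda)\sqrt{\lambda}\,\|f\|_2$. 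To handle the period, I first transport the base point from $q_j$ to $q_0$: the oriented geodesic quadrilateral with vertices $q_j, q_0, \gamma_j q_0, \gamma_j q_j$ decomposes into two geodesic triangles of total area at most $2\pi$, so Stokes combined with the $\Gamma$-invariance of the lift of $g$ to $\mathbb{H}^n$ gives
\begin{equation*}
\left| \int_{\alpha_{q_j,\gamma_j q_j}} g \right| \leq \left| \int_{\alpha_{q_0,\gamma_j q_0}} g \right| + 2\pi\, \|df\|_{\infty}.
\end{equation*}
Proposition \ref{subtractingharmonicpart} bounds the first term on the right by $\|df\|_{\infty}\bigl(\mathrm{sArea}(\alpha_{q_0,\, \gamma_j \gamma_1^{-n_1(\gamma_j)} \cdots \gamma_k^{-n_k(\gamma_j)} q_0}) + 5 b_1(M)\pi\bigr)$, producing the total offset $(5 b_1(M) + 2)\pi$ that appears in the statement.

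To finish, divide the $\|g\|_2^2$-inequality by $\|g\|_2^2$, substitute the three bounds above, and then divide the resulting inequality by $\sqrt{\lambda}$. The key observation is that $\|df\|_{\infty}$ is controlled by $\|f\|_2$ rather than $\|g\|_2$, so every ratio $\|df\|_{\infty}/\|g\|_2$ carries the damping factor
\begin{equation*}
\rho := \frac{\|f\|_2}{\bigl(\|f\|_2^2 + \|h\|_2^2\bigr)^{1/2}}.
\end{equation*}
On the two terms not involving the period (the $3\pi\vol(\partial F)$ and $\sqrt{\vol(M)}$ contributions) I bound $\rho \leq 1$; on the stable-area term I retain $\rho$, arriving at the stated inequality. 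The main obstacle is the bookkeeping in this final assembly: the proposition is only useful downstream if $\rho$ survives precisely on the stable-area term, so care must be taken to track which constant factor pairs up with which $L^2$-norm, and to absorb the harmless numerical constants (such as the $\sqrt{2}$ from combining $\|f\|_{\infty}$ and $\|h\|_{\infty}$) into $C(\lambda)^2$.
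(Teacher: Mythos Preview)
Your argument is correct and follows essentially the same route as the paper: set $g=f-h$, apply Proposition~\ref{newalmostprimitive} (equivalently, the first inequality of Corollary~\ref{lowerboundeigenvalue}) to $g$, transport the reference point from $q_j$ to $q_0$ via a geodesic quadrilateral of area $\leq 2\pi$, feed in Proposition~\ref{subtractingharmonicpart}, and use the Sobolev estimate together with $\|g\|_2^2=\|f\|_2^2+\|h\|_2^2$ to isolate the damping factor $\rho$ on the stable-area term. Your treatment of the Sobolev step---applying Proposition~\ref{supnormeigenfunction} to the eigenforms $f$ and $h$ separately and then adding---is in fact slightly more careful than the paper's, which simply invokes Corollary~\ref{lowerboundeigenvalue} with $f-h$ in place of $f$.
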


\begin{proof}
Let $q_j$ be the reference points used in Corollary \ref{lowerboundeigenvalue}.
The broken geodesic $\alpha_{q_j, \gamma_j q_j} + \alpha_{\gamma_j q_j, \gamma_j q_0} + \alpha_{\gamma_j q_0, q_0} + \alpha_{q_0,q_j}$ is the oriented boundary of a broken geodesic quadrilateral $Q$ of area at most $2\pi.$  The projection of this quadrilateral to $M$ has boundary $\alpha_{q_j, \gamma_j q_j} + \alpha_{\gamma_j q_0, q_0}.$  By Stokes and Proposition \ref{subtractingharmonicpart}:

\begin{align} \label{periodchangereferencepoint}
\sup_j \left| \int_{\alpha_{q_j, \gamma_j q_j}} (f-h) \right| &\leq \sup_j \left| \int_{\alpha_{q_0, \gamma_j q_0}} (f-h) \right| + \left| \int_Q d(f-h) \right| \nonumber \\
&\leq  \sup_j \left| \int_{\alpha_{q_0, \gamma_j q_0}} (f-h) \right| + ||df||_{\infty} \cdot 2\pi \nonumber \\
&\leq ||df||_{\infty} \cdot \left(  \sup_j  \mathrm{sArea}\left( \alpha_{q_0, \gamma_j \cdot \gamma_1^{-n_1(\gamma_j)} \cdots \gamma_k^{-n_k(\gamma_j)} q_0}  \right) + (5b_1(M) + 2) \pi  \right).
\end{align}

Substituting \eqref{periodchangereferencepoint} into the first inequality stated in Corollary \ref{lowerboundeigenvalue} (and remembering that $f-h$ here plays the role of $f$ in Corollary \ref{lowerboundeigenvalue}) followed by the Sobolev inequality from Proposition \ref{supnormeigenfunction} gives  

\begin{align*}
 \left( ||f||_2^2 + ||h||_2^2 \right)^{1/2} &\leq \sqrt{\lambda} \cdot \vol(\p F) \cdot C( \lambda)^2 \cdot 3\pi \cdot \left(||f||_2^2 + ||h||_2^2 \right)^{1/2} \\
&+ \sqrt{\lambda} \cdot C(  \lambda)^2 \cdot ||f||_2 \cdot \left(  \sup_j  \mathrm{sArea}\left( \alpha_{q_0, \gamma_j \cdot \gamma_1^{-n_1(\gamma_j)} \cdots \gamma_k^{-n_k(\gamma_j)} q_0}  \right) + (5b_1(M) + 2) \pi  \right)  \\
&+ \sqrt{\lambda} \cdot C( \lambda) \cdot \frac{1}{2} \cdot \sqrt{\vol(M)} \cdot \left( ||f||_2^2 + ||h||_2^2 \right)^{1/2} \nonumber \\
%&+ \sqrt{\lambda} \cdot   C( \lambda) \cdot (n-1) \cdot \sqrt{\vol(M)} \cdot \left(  ||f||_2^2 + ||h||_2^2 \right)^{1/2}  \nonumber\\
\end{align*} 

Upon rearranging:

\begin{align*}
 \frac{1}{\sqrt{\lambda}} &\leq 3\pi\cdot \vol(\p F) \cdot C( \lambda)^2  +   C( \lambda) \cdot \frac{1}{2} \cdot \sqrt{\vol(M)}\nonumber \\
&+  C(  \lambda)^2 \cdot \frac{||f||_2}{ \left( ||f||_2^2 + ||h||_2^2 \right)^{1/2} } \cdot \left(  \sup_j  \mathrm{sArea}\left( \alpha_{q_0, \gamma_j \cdot \gamma_1^{-n_1(\gamma_j)} \cdots \gamma_k^{-n_k(\gamma_j)} q_0}  \right) + (5b_1(M) + 2) \pi  \right)  . \nonumber\\
\end{align*} 
\end{proof}

\begin{theorem}[Geometric Upper Bound for $\frac{1}{\lambda_1^1(M)_{d^\ast}}$ when $b_1(M) > 0$] \label{bodyupperboundpositivebettinumber}
Let $M_0$ be a closed hyperbolic $n$-manifold.  Let $M \rightarrow M_0$ be a finite cover.  Suppose $H_1(M,\mathbb{Z}) = \mathbb{Z} \langle \gamma_1 \rangle \oplus \cdots \oplus\mathbb{Z} \langle \gamma_k \rangle \oplus \mathrm{finite}.$  Let $n_i(\gamma)$ be the unique integers for which $\gamma - \sum_{i = 1}^k n_i(\gamma) \gamma_i \in H_1(M,\mathbb{Z})$ has finite order.  Fix a basepoint $q_0 \in M.$

Let $f$ be a 1-form in the image of $d^\ast$ satisfying $\Delta f = \lambda f,$ where $\lambda = \lambda_1^1(M)_{d^\ast}.$  Let $h$ be the unique harmonic 1-form satisfying
$$\int_{\alpha_{q_0,\gamma_i q_0}} h = \int_{\alpha_{q_0,\gamma_i q_0}} f \text{ for } i = 1,\ldots,k.$$
Then
\begin{align*}
 \frac{1}{\sqrt{\lambda}} &\leq 3\pi\cdot C( \lambda)^2\cdot V   +   C( \lambda) \cdot \frac{1}{2} \cdot \sqrt{\vol(M)}  \nonumber \\
&+  C(  \lambda)^2 \cdot \frac{||f||_2}{ \left( ||f||_2^2 + ||h||_2^2 \right)^{1/2} } \cdot \left(  \sup_{\ell(\gamma) \leq D}  \mathrm{sArea}\left( \alpha_{q_0, \gamma \cdot \gamma_1^{-n_1(\gamma)} \cdots \gamma_k^{-n_k(\gamma)} q_0}  \right) + (5b_1(M) + 2) \pi  \right)  \\
&. \nonumber\\
\end{align*}    

where
$$V := \vol(\partial F) \leq \begin{cases} \vol( \partial F_0) \cdot \frac{\vol(M)}{\vol(M_0)} &\text{ if } $F$ \text{ is tree-type} \\ E_n \cdot e^{(2n-3) \cdot 4 \mathrm{diam}(M)} & \text{ if } F \text{ is Dirichlet} \end{cases}$$

and 
$$D := \max_j \ell(\gamma_j) \leq \begin{cases}  \mathrm{diam}(F_0) \cdot \left(  \frac{4k_0}{r_0} \cdot \mathrm{diam}(M) + 4k_0 + 1 \right)
&\text{ if } F \text{ is tree-type} \\ 2 \cdot \mathrm{diam}(M) &\text{ if } F \text{ is Dirichlet}.\end{cases}$$

Here, $F_0$ is a Dirichlet fundamental domain for $M_0, r_0$ and $k_0$ are the constants from Proposition \ref{diametercomparison}, $E_n$ is the constant from Proposition \ref{volumeRneighborhood}, and $C(\lambda)$ is the Sobolev constant from  \eqref{clambda}.  
\end{theorem}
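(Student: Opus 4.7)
The plan is to deduce Theorem \ref{bodyupperboundpositivebettinumber} as a direct corollary of Proposition \ref{lowerboundeigenvaluepositivebettinumber}, after substituting in the explicit geometric estimates on $\vol(\partial F)$ and $\mathrm{diam}(F)$ established in \S \ref{geometrytwotypesfundamentaldomains}. The content of the theorem lies not in a new analytic argument, but in collating the previous estimates and in passing from the finite set $P_F = \{\gamma_1, \dots, \gamma_J\}$ of face-pairing elements to the more intrinsic set $\{\gamma \in \Gamma : \ell(\gamma) \leq D\}$.

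First I would apply Proposition \ref{lowerboundeigenvaluepositivebettinumber} directly to the given eigenform $f$ (with $d^\ast f = 0$ and $\Delta f = \lambda f$, $\lambda = \lambda_1^1(M)_{d^\ast}$) and to the uniquely determined harmonic form $h$ matching the periods of $f$ along $\gamma_1, \dots, \gamma_k$. This immediately yields an inequality of the form stated, but with $\sup_j$ running over the face-pairing elements in $P_F$ (as defined in \eqref{pfdef}) for whichever fundamental domain $F$ has been chosen.

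Next I would observe that for any face-pairing element $\gamma_j$ arising from the partition $\{\Sigma_j, \Sigma_j'\}$ of $\partial F$, and for any chosen reference point $q_j \in \Sigma_j$, one has $\gamma_j q_j \in \Sigma_j' \subset F$, hence
\[
\ell(\gamma_j) \;\leq\; d(q_j, \gamma_j q_j) \;\leq\; \mathrm{diam}(F).
\]
Consequently $D := \max_j \ell(\gamma_j) \leq \mathrm{diam}(F)$, and enlarging the supremum from $j \in \{1,\dots,J\}$ to all $\gamma \in \Gamma$ with $\ell(\gamma) \leq D$ only weakens the inequality. This is the only step requiring a small remark beyond invoking earlier results.

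Finally I would substitute the geometric bounds on $V = \vol(\partial F)$ and $D$ according to the choice of fundamental domain: Lemma \ref{treetypeareabound} and Corollary \ref{diameterboundtreetypefundamentaldomain} in the tree-type case, and Proposition \ref{volumeRneighborhood} together with Lemma \ref{upperbounddiameterdirichletdomain} in the Dirichlet case. Each of these was established precisely to yield the stated dependences on $\vol(M), \mathrm{diam}(M)$, and constants depending only on $M_0$ (through $F_0$, $r_0$, $k_0$, and $E_n$). Since every ingredient is already in place, I do not expect any genuine obstacle: the proof is essentially a substitution, and its purpose is to package Proposition \ref{lowerboundeigenvaluepositivebettinumber} in a form directly usable for the applications in \S \ref{motivation} and \S \ref{applications}.
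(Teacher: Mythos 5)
Your proposal is correct and follows the same route as the paper: invoke Proposition \ref{lowerboundeigenvaluepositivebettinumber} for the eigenform $f$ and matching harmonic form $h$, then substitute the bounds on $\vol(\partial F)$ from Lemma \ref{treetypeareabound} / Proposition \ref{volumeRneighborhood} and on $\mathrm{diam}(F)$ from Corollary \ref{diameterboundtreetypefundamentaldomain} / Lemma \ref{upperbounddiameterdirichletdomain}. Your added remark that each face-pairing element satisfies $\ell(\gamma_j) \leq d(q_j,\gamma_j q_j) \leq \mathrm{diam}(F)$ --- and that passing from $\sup_j$ over $P_F$ to $\sup_{\ell(\gamma)\leq D}$ (with $D = \max_j \ell(\gamma_j)$) only weakens the inequality --- makes explicit what the paper leaves implicit, but both of these are already set up in the paragraph preceding Proposition \ref{lowerboundeigenvaluepositivebettinumber}.
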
 

\begin{proof}
This follows immediately from Proposition \ref{lowerboundeigenvaluepositivebettinumber}.  The upper bounds on $V$ follow from Propositions \ref{treetypeareabound} and \ref{volumeRneighborhood}.  The upper bounds on $D$ follow from Propositions \ref{diameterboundtreetypefundamentaldomain} and \ref{upperbounddiameterdirichletdomain}.  
\end{proof}

The upper bound for $\frac{1}{\lambda_1^1(M)_{d^\ast}}$ from Theorem \ref{bodyupperboundpositivebettinumber} is only useful if $\gamma - \sum_{i = 1}^k n_i(\gamma) \gamma_i$ is a short loop for $\gamma$ appearing in the sum on the right side of the inequality featured therein.  We turn next to controlling the size of the $n_i(\gamma).$

\subsubsection{Controlling the free part of $\gamma \in \pi_1(M)$ via regulators} \label{freepartregulators}
Let $M_0$ be a closed hyperbolic $3$-manifold and let $M \rightarrow M_0$ be a finite cover.  Let $H_1(M,\mathbb{Z}) = \mathbb{Z} \langle \gamma_1 \rangle \oplus \cdot \oplus \mathbb{Z} \langle \gamma_k \rangle \oplus \mathrm{finite}$ and $H_2(M, \mathbb{Z}) = \mathbb{Z} \langle S_1 \rangle \oplus \cdots \oplus \mathbb{Z} \langle S_k \rangle \oplus \mathrm{finite}.$ 

%\textcolor{purple}{Every $[S_i]$ admits a representative $S_i$ which is properly embedded, incompressible, and realizes the Thurston norm of $[S_i].$  By \cite{FHS}, the isotopy class of $S_i$ admits an area minimizer $\Sigma_i.$  The surface $\Sigma_i$ is a stable minimal surface.  Schoen \cite[Theorem 3]{Schoen} proved that the second fundamental form of $\Sigma$ is bounded by some constant independent of $M$ and $\Sigma$. Using the Gauss equations for the curvature of submanifolds and the vanishing of the mean curvature of $\Sigma$, Schoen's bound implies that the curvature $K_g$ of $\Sigma$ with respect to the induced metric $g$ is bounded between $[-C,-1]$ for some constant $C \geq 1$ independent of $M, \Sigma.$  } 

Every $S_i$ is represented by a stable, properly embedded minimal surface of least area in its homology class \cite[5.1.6 and 5.4.6]{Federer} \cite[10.2]{Morgan} \cite[p. 28]{Simon}. Let $\Sigma = \Sigma_i$ denote such a surface. Schoen \cite[Theorem 3]{Schoen} proved that the second fundamental form of $\Sigma$ is bounded by some constant independent of $M$ and $\Sigma$. Using the Gauss equations for the curvature of submanifolds and the vanishing of the mean curvature of $\Sigma$, Schoen's bound implies that the curvature $K_g$ of $\Sigma$ with respect to the induced metric $g$ is bounded between $[-C,-1]$ for some constant $C \geq 1$ independent of $M, \Sigma.$ 
 
With the help of these curvature bounds, we construct a triangulation of $(\Sigma,g)$  of bounded geometry.  

%By uniformization, $(\Sigma,g)$ is conformal to $(\Sigma,g_0)$ for a unique hyperbolic metric $g_0.$      

%\begin{lemma} \label{pinchedcurvatureconformalchange}
%Let $\lambda^2 g_0 = g.$  The curvature bound $K_g \in [-C,-1]$ implies that 
%$$\frac{1}{C} \leq \lambda^2 \leq 1.$$
%\end{lemma}  
%
%\begin{proof}
%\textcolor{red}{Thanks, Goncalo!} The curvatures are related as follows:
%$$\lambda^2 K_g = K_{g_0} + \Delta_{g_0}(\log \lambda),$$
%where $\Delta_{g_0}$ denotes the (positive definite) Laplacian. Suppose $\lambda,$ and hence $\log \lambda$ attains its maximum value at $P \in \Sigma$ and its minimum value at $p \in \Sigma.$  Then 
%$$\Delta_{g_0}(\log \lambda)(P) \geq 0 \text{ and } \Delta_{g_0}(\log \lambda)(p) \leq 0.$$ 
%Therefore,
%\begin{equation*}
%\lambda^2(P) = \frac{K_{g_0}(P)}{K_g(P)} + \frac{\Delta_{g_0}(\log \lambda)(P)}{K_g(P)} \leq  \frac{K_{g_0}(P)}{K_g(P)} \leq 1
%\end{equation*}  
%and
%\begin{equation*}
%\lambda^2(p) = \frac{K_{g_0}(p)}{K_g(p)} + \frac{\Delta_{g_0}(\log \lambda)(p)}{K_g(p)} \geq  \frac{K_{g_0}(p)}{K_g(p)} \geq \frac{1}{C}.
%\end{equation*}
%The Lemma follows. 
%\end{proof}
 \begin{lemma}\label{muM}Let $M$ be a compact hyperbolic 3-manifold. Then there exists $\mu_M>0$ depending only on the injectivity radius of $M$ so that for every compact embedded stable minimal surface $S$ in $M$ the injectivity radius of $S$ is greater than or equal to $\mu_M$. 
\end{lemma}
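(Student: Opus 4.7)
The plan is to combine the curvature bounds just established (namely $\|II_S\|_\infty \leq C_0$ from Schoen and $K_S \in [-C, -1]$) with a Gauss--Bonnet obstruction and the standard graphical description of a submanifold of bounded extrinsic curvature. The idea: if $\mathrm{inj}_S(p)$ were too small, the geodesic bigon realising the cut distance at $p$ would be forced to live inside a single graphical sheet of $S$, so it would bound a disk in $S$; but a disk bounded by a geodesic bigon cannot exist on a surface with $K \leq -1$ by Gauss--Bonnet.

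First I would produce a graphical neighborhood of $p \in S$ of definite size. Pulling $S$ back through the exponential chart $\exp_p \colon B(0, \mathrm{inj}(M)) \subset T_pM \to B_M(p, \mathrm{inj}(M))$ and comparing the ambient hyperbolic metric in normal coordinates to the flat metric on $T_pM$ (where the discrepancy is $O(|x|^2)$ with constants determined by $K_M \equiv -1$), the bound $\|II_S\|_\infty \leq C_0$ gives a comparable Euclidean extrinsic curvature bound for $\exp_p^{-1}(S)$ on a small ball. A standard ODE argument on the graph equation then shows that for $\rho := \min(\mathrm{inj}(M)/2,\, c/C_0)$ with $c$ an absolute constant, the connected component $U_p$ of $S \cap B_M(p, \rho)$ through $p$ is the graph over $B(0, \rho) \cap T_pS$ of a smooth function tangent to $T_pS$ at $0$ with $|Df| \leq 1/2$. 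In particular $U_p$ is a topological disk, and $\rho$ depends only on $\mathrm{inj}(M)$.

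The conclusion then follows by contradiction. Assume $\mathrm{inj}_S(p) < \rho/2$. Since $K_S < 0$ kills conjugate points, the cut distance at $p$ is realised by two minimising geodesics in $S$ from $p$ to some cut point $q$ at distance $\mathrm{inj}_S(p)$, forming a geodesic bigon $\gamma$ of length $2\,\mathrm{inj}_S(p) < \rho$. Every point of $\gamma$ is at intrinsic distance at most $\mathrm{inj}_S(p)$ from $p$, so $\gamma \subset B_S(p, \rho/2)$; because $d_S \geq d_M$ and $B_S(p, \rho/2)$ is a connected subset of $S$ containing $p$, one has $B_S(p, \rho/2) \subset U_p$. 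Hence $\gamma$ bounds an embedded sub-disk $D \subset U_p \subset S$. Gauss--Bonnet on $D$, whose boundary is piecewise geodesic with interior angles $\alpha_p, \alpha_q \geq 0$ at its two corners, gives
\[
\int_D K_S \, dA \;=\; \alpha_p + \alpha_q \;\geq\; 0,
\]
contradicting $\int_D K_S \, dA \leq -\mathrm{area}(D) < 0$. Therefore $\mathrm{inj}_S(p) \geq \rho/2 =: \mu_M$, which depends only on $\mathrm{inj}(M)$.

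The main obstacle is the quantitative graphical description in the second paragraph: one must ensure that the radius on which $U_p$ is a graph with small Lipschitz constant depends only on $C_0$ (Schoen's universal bound) and $\mathrm{inj}(M)$, not on further geometry of $M$ or $S$. This reduces to controlling the hyperbolic-versus-Euclidean metric distortion on $B(0, \rho) \subset T_pM$ and then integrating the standard graph ODE $f(0) = 0$, $Df(0) = 0$, $|D^2 f| \lesssim C_0$ on a ball of radius $\sim 1/C_0$. The Gauss--Bonnet computation and the containment $B_S(p, \rho/2) \subset U_p$ are then essentially formal.
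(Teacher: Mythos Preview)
Your argument is correct, but it takes a genuinely different route from the paper's proof. The paper never invokes a graphical patch or Gauss--Bonnet. Instead, it works directly with a shortest closed geodesic $\gamma$ of $S$ (which exists since $K_S<0$ kills conjugate points, so $\mathrm{inj}(S)$ equals half the length of the shortest closed geodesic). Viewing $\gamma$ as a curve in $M$, its ambient acceleration is normal to $S$ with $|\nabla^M_{\dot\gamma}\dot\gamma|=|\sigma(\dot\gamma,\dot\gamma)|\le\mu_1$ by Schoen's bound. Integrating the radial distance $r(\gamma(t))$ from $\gamma(0)$ inside the ball $B_{\mathrm{inj}(M)}(\gamma(0))$ and using that the Hessian of $r$ is nonnegative in hyperbolic space, one gets $r(\gamma(t))\ge t-\mu_1 t^2/2$, forcing $\gamma$ to reach ambient distance at least $\min\{1/(2\mu_1),\mathrm{inj}(M)\}$ from its start before it can close up. This immediately bounds $\mathrm{length}(\gamma)$, hence $\mathrm{inj}(S)$, from below.

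Both arguments feed on the same input (Schoen's bound $|\sigma|\le\mu_1$) and produce the same type of bound $\mu_M=\min\{c/\mu_1,\mathrm{inj}(M)\}$. The paper's route is shorter and entirely self-contained: one ODE inequality for $r(\gamma(t))$ and nothing else. Your route outsources the work to the ``bounded second fundamental form $\Rightarrow$ definite-size graph'' lemma, which is standard but nontrivial to make quantitative; once that lemma is granted, the Gauss--Bonnet contradiction is clean and conceptually transparent (no geodesic bigons on simply connected $K<0$ patches). Either way the dependence is only on $\mathrm{inj}(M)$, as required.
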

\begin{proof} By \cite[Theorem 3]{Schoen} (See also \cite[Corollary 11]{Ros}), there exists $\mu_1>1$ depending only on the injectivity radius of $M$ so that the second fundamental form $\sigma$ of $S$ satisfies $|\sigma|<\mu_1$. Let $\gamma$ be a closed geodesic in $S$, parameterized by arclength. Then 
$$\nabla^N_{\dot\gamma}\dot \gamma = a(t)\nu(t),$$
for some scalar function $a$ and unit normal vector field $\nu$. Because $\gamma$ is parametrized by arclength, $$0=\frac{d^2}{dt^2}|\dot\gamma|^2,$$ and 
$$0 =  |\nabla^M_{\dot\gamma}\dot \gamma |^2 + \langle (\nabla^M_{\dot\gamma})^2\dot \gamma,\dot\gamma\rangle  = a^2-a\sigma(\dot\gamma,\dot\gamma).$$
Hence
\begin{equation}\label{torq}|\nabla^M_{\dot\gamma}\dot \gamma|\leq \mu_1.\end{equation}
In a geodesic ball $B_{\mathrm{inj}(M)} \subset M$ centered at $\gamma(0),$  $\lim_{t\to0}\langle\frac{\p}{\p r} , \dot \gamma(t)\rangle = 1$. 
Thus, 
\begin{align}r(\gamma(t)) &=\int_0^t \left\langle\dot\gamma(s),\frac{\p}{\p r} \right\rangle ds
=\int_0^t \left(1+\int_0^s\frac{d}{du} \left\langle\dot\gamma(u),\frac{\p}{\p r} \right\rangle du \right) ds \nonumber\\ 
& =t+\int_0^t\int_0^s \left\langle \nabla^M_{\dot\gamma}\dot\gamma(u),\frac{\p}{\p r} \right\rangle du ds +\int_0^t\int_0^s \left\langle\dot\gamma(u),\nabla^N_{\dot\gamma}\frac{\p}{\p r} \right\rangle du ds\nonumber \\
&\geq t-\mu_1\frac{t^2}{2}.
\end{align}
 
Hence, $r$ achieves a max greater than or equal to $ \min\{\frac{1}{2\mu_1},\mathrm{inj}(M)\}.$ The lower bound on $r$ gives a lower bound on the length of $\gamma$ and $\mathrm{inj}(S)$.  
The injectivity radius of $S$ is therefore greater than or equal to $\min\{\frac{1}{2\mu_1},\mathrm{inj}(M)\}.$ 
\end{proof} 
Let $V_r(-K)$ denote the volume of a geodesic ball of radius $r$ in the hyperbolic space of constant curvature $-K.$
\begin{proposition}[triangulations of $\Sigma$ with bounded geometry] \label{boundedgeometrysurface}
Suppose $\Sigma$ is a stable, properly embedded, minimal
%incompressible
 surface in hyperbolic 3-manifold $M.$  Suppose $\Sigma$ with its induced metric $g$ has curvature $K_g \in [-C, -1].$  Let $\mu_M$ denote the injectivity bound of Lemma \ref{muM}. There is a triangulation of $(\Sigma,g)$ by at most 
$$\frac{\vol_g(\Sigma)}{V_{\frac{\mu_M}{5}}(-1)} \cdot  \frac{V_{\mu_M}(-C)}{V_{\frac{\mu_M}{5}}(-1)}$$
triangles such that every triangle $T$ contains a vertex with distance at most $\frac{2\mu_M}{5}$ from all other points of $T.$ 
\end{proposition}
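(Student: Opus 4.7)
The plan is to build the triangulation from a maximal $\tfrac{2\mu_M}{5}$-separated net $\{p_1,\ldots,p_N\}\subset \Sigma$, form the Voronoi decomposition on $(\Sigma,g)$, and star-triangulate each Voronoi cell from its center $p_i$. First I would bound $N$: the balls $B(p_i,\tfrac{\mu_M}{5})$ are pairwise disjoint by separation, and since $\tfrac{\mu_M}{5}<\mu_M\le \mathrm{inj}(\Sigma,g)$ and the upper sectional curvature bound $K_g\le -1$ gives a G\"unther-type volume lower comparison, each such ball has volume at least $V_{\mu_M/5}(-1)$. By maximality the concentric $\tfrac{2\mu_M}{5}$-balls cover $\Sigma$. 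Hence $N\le \vol(\Sigma,g)/V_{\mu_M/5}(-1)$, which is the first factor in the claimed bound.

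Next I would form the Voronoi cells $V_i=\{x\in\Sigma:d(x,p_i)\le d(x,p_j)\ \forall j\}$. Since the covering radius is at most $\tfrac{2\mu_M}{5}<\mathrm{inj}(\Sigma,g)$, every $V_i$ is contained in $B(p_i,\tfrac{2\mu_M}{5})$; at this scale standard arguments from Riemannian Voronoi theory show that $V_i$ is a topological disk bounded by finitely many bisector arcs meeting at Voronoi vertices $q_1^{(i)},\ldots,q_{k_i}^{(i)}$. The star triangulation then partitions $V_i$ into $k_i$ (possibly non-geodesic) triangles $(p_i,q_j^{(i)},q_{j+1}^{(i)})$, each bounded by two geodesic edges emanating from $p_i$ together with one bisector arc; adjacent cells share bisector arcs, so the star triangulations glue into a genuine simplicial decomposition of $\Sigma$. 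The distance property is then immediate: each triangle lies inside $V_i\subset \overline{B}(p_i,\tfrac{2\mu_M}{5})$, so the distinguished vertex $p_i$ is within $\tfrac{2\mu_M}{5}$ of every point of the triangle.

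Finally I would bound the total number of triangles by $\sum_i k_i\le N\cdot \max_i k_i$. Any Voronoi neighbor $p_j$ of $p_i$ satisfies $d(p_i,p_j)\le \tfrac{4\mu_M}{5}$ (take any point on their common bisector arc and apply the triangle inequality), so the pairwise disjoint balls $B(p_j,\tfrac{\mu_M}{5})$ over the $k_i$ neighbors all lie in $B(p_i,\mu_M)$. Bishop--Gromov with the lower curvature bound $K_g\ge -C$ bounds $\vol B(p_i,\mu_M)\le V_{\mu_M}(-C)$, yielding $k_i\le V_{\mu_M}(-C)/V_{\mu_M/5}(-1)$. Combining with the estimate on $N$ gives the claimed product bound.

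The main obstacle is the technical verification that the Voronoi decomposition is combinatorially well-behaved at this scale on a surface of variable negative curvature---specifically, that each $V_i$ is a disk bounded by finitely many bisector arcs and that the star triangulations assemble into a genuine simplicial complex. The chosen ratio $\tfrac{2\mu_M}{5}/\mu_M=2/5$ leaves enough slack relative to the injectivity radius that the standard bounded-geometry Delaunay--Voronoi framework applies; the curvature upper bound $-1$ (via Toponogov/G\"unther) ensures the relevant balls are convex, and the lower bound $-C$ (via Bishop--Gromov) supplies the local packing estimate used to control neighbor counts.
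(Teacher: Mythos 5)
Your proposal is correct and follows essentially the same line of argument as the paper: a maximal $\tfrac{2\mu_M}{5}$-separated net, the associated Voronoi/Dirichlet decomposition, the star triangulation of each cell from its center, a volume-comparison bound on the number of net points (disjoint $\tfrac{\mu_M}{5}$-balls packed into $\Sigma$), and a neighbor-count bound via disjoint $\tfrac{\mu_M}{5}$-balls packed into $B(p_i,\mu_M)$. The paper phrases this with a parameter $\delta<\tfrac{2\mu_M}{5}$ and the terminology ``Dirichlet polygons'' in place of ``Voronoi cells,'' but the packing estimates and the resulting bound $\tfrac{\vol_g(\Sigma)}{V_{\mu_M/5}(-1)}\cdot\tfrac{V_{\mu_M}(-C)}{V_{\mu_M/5}(-1)}$ are identical.
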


\begin{proof}
Fix $\delta < \frac{2 \mu_M}{5}.$  Geodesic balls on $(\Sigma,g)$ of radius $5\delta/2$ are embedded.  
%Indeed, because $\pi_1(\Sigma) \rightarrow \pi_1(M)$ is injective, the injectivity radius of $(\Sigma,g)$ is large:
%$$\mathrm{inj}(\Sigma,g) \geq \mathrm{inj}(M) =: R_M.$$
%Because $\lambda^2 g_0 = g$ and $K_g \in [-C,-1],$ Lemma \ref{pinchedcurvatureconformalchange} tells us that $\frac{1}{C} \leq \lambda^2 \leq 1$ and so
%\begin{equation} \label{comparingdistancefunctions}
%\mathrm{dist}_g(x,y) \leq \mathrm{dist}_{g_0}(x,y) \leq C^{\frac{1}{2}} \mathrm{dist}_g(x,y)
%\end{equation}
%for every $x,y \in \Sigma.$  In particular, every geodesic ball in $(\Sigma, g_0)$ of radius $\delta < R_M$ in $\Sigma$ is embedded.
Let $\mathcal{P}$ be a maximal subset of $(\Sigma, g_0)$ for which the pairwise distances between all points of $\mathcal{P}$ are at least $\delta.$  For $p \in \mathcal{P},$  define the Dirichlet polygons  
$$D_p := \{ x \in \Sigma: \mathrm{dist}_g(x,p) \leq \mathrm{dist}_g(x,p') \text{ for all }  p' \in \mathcal{P}\setminus\{p\} \}.$$
The $\{D_p\}_{p\in \mathcal{P}}$ tile $\Sigma.$  For every $q \in \Sigma,$ $B_{\delta}(q)$ contains some $p' \in \mathcal{P}$ by maximality of $\mathcal{P}.$  Therefore, if $q$ lies in $D_p,$ 
$$\mathrm{dist}_g(p,q) \leq \mathrm{dist}_g(p',q) \leq \delta.$$ 

Therefore, 
\begin{equation} \label{diameterdirichletpolygon}
\mathrm{diam}_g(D_p) \leq \delta.
\end{equation}

Every edge on the boundary of $D_p$ is a segment $F_{p,p'}$ of some bisector $C_{p,p'} = \{ x \in \Sigma: \mathrm{dist}_{g_0}(x,p) = \mathrm{dist}_{g_0}(x,p') \}$ for $p'\in \mathcal{P}.$  Therefore, if $q$ lies on $F_{p,p'},$
$$\mathrm{dist}_g(p,p') \leq \mathrm{dist}_g(p,q) + \mathrm{dist}_g(q,p') = 2 \mathrm{dist}(p,q) \leq 2 \delta.$$

Thus, the number of faces of $D_p$ is at most the number of $p' \in \mathcal{P}$ contained in $B_{2\delta}(p).$ For   $F_{p,p'}$ and $F_{p,p''}$ nonempty,  $B_{\frac{\delta}{2}}(p')$ and $B_{\frac{\delta}{2}}(p'')$   are disjoint subsets of $B_{\frac{5\delta}{2}}(p).$  Therefore, there are at most 
$\frac{\vol(B_{5\delta/2}(p))}{V}$ such points $p',$ where $V:=\inf_{q \in \Sigma} \vol (B_{\delta/2}(q)).$  Because the curvature $K_g \in [-C,-1],$ 
\begin{equation} \label{numberfacesdirichletpolygon}
\# \text{ faces of } D_p \leq \frac{V_{5\delta/2}(-C)}{V_{\delta/2}(-1)}, 
\end{equation}  
  Joining $p$ to the vertices of $D_p$ using geodesics, we may thus triangulate $D_p$ by at most $\frac{V_{5\delta/2}(-C)}{V_{\delta/2}(-1)}$ triangles of diameter at most $\delta.$ 

Covering $  \mathcal{P}$ by disjoint balls of radius $\delta/2$ gives 
$$\# \mathcal{P} \leq \frac{\vol_g(\Sigma)}{V_{\delta/2}(-1)}.$$

Therefore, we may triangulate $(\Sigma,g)$ by at most 
$$\frac{\vol_g(\Sigma)}{V_{\delta/2}(-1)} \cdot  \frac{V_{5\delta/2}(-C)}{V_{\delta/2}(-1)}$$
triangles of diameter at most $\delta.$

\end{proof}

\begin{proposition} \label{boundingintersectionnumbers}
Suppose $\Sigma$ is a stable, properly embedded, minimal
%incompressible
 surface in the closed hyperbolic 3-manifold $M.$  Suppose $\Sigma$ with its induced metric $g$ has curvature $K_g \in [-C, -1].$ Let $\mu_M$ denote the injectivity radius lower bound of Lemma \ref{muM}. Let $\gamma$ be a geodesic in $M$ intersecting $\Sigma$ transversely. Let $\# \left( \Sigma \cap \gamma \right)$ denote the absolute value of the topological intersection number of $\Sigma$ and $\gamma$.  Then
$$\# \left( \Sigma \cap \gamma \right) \leq \frac{\vol_g(\Sigma)}{V_{\frac{\mu_M}{5}}(-1)} \cdot  \frac{V_{\mu_M}(-C)}{V_{\frac{\mu_M}{5}}(-1)} \cdot \frac{\ell(\gamma)}{6 \; \mu_M / 5}.$$
\end{proposition}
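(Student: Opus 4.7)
The strategy is to combine the triangulation of $\Sigma$ provided by Proposition \ref{boundedgeometrysurface} with a partition of $\gamma$ into short geodesic sub-arcs, and to show that each such sub-arc meets each triangle in at most one point.

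Apply Proposition \ref{boundedgeometrysurface} to obtain a triangulation of $(\Sigma, g)$ into at most
$$N := \frac{\vol_g(\Sigma)}{V_{\mu_M/5}(-1)} \cdot \frac{V_{\mu_M}(-C)}{V_{\mu_M/5}(-1)}$$
triangles, each triangle $T$ containing a distinguished vertex $p_T$ with $d_\Sigma(p_T, q) \leq 2\mu_M/5$ for all $q \in T$. Since extrinsic $M$-distance is bounded above by intrinsic $\Sigma$-distance, $T \subset B_M(p_T, 2\mu_M/5)$ and $\mathrm{diam}_M(T) \leq 4\mu_M/5$. Partition $\gamma$ into consecutive geodesic sub-arcs $\gamma_1, \ldots, \gamma_m$ of $M$-length at most $6\mu_M/5$, with $m \leq \ell(\gamma)/(6\mu_M/5)$, giving
$$\#(\gamma \cap \Sigma) = \sum_{i,T} \#(\gamma_i \cap T).$$

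The heart of the argument is the claim $\#(\gamma_i \cap T) \leq 1$ for every pair $(i, T)$. Suppose to the contrary that $q_1, q_2 \in \gamma_i \cap T$ are two intersections consecutive along $\gamma_i$, joined by a sub-arc of length $L \leq 6\mu_M/5$. Then $d_M(q_1, q_2) \leq 4\mu_M/5$. Lifting to $\mathbb{H}^3$, any two distinct lifts of $q_2$ lie at mutual distance at least $2\,\mathrm{inj}(M) \geq 2\mu_M$ (via the standard argument that a shorter separation between lifts would produce a geodesic loop at $q_2$ violating the injectivity radius bound), so any geodesic from $q_1$ to $q_2$ other than the unique minimizer has length at least $2\mu_M - 4\mu_M/5 = 6\mu_M/5$. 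This forces the $\gamma_i$-sub-arc from $q_1$ to $q_2$ to be the minimizing geodesic, and hence (by convexity of small metric balls in the hyperbolic space) to lie entirely in the convex ball $B_M(p_T, 2\mu_M/5) \cong B_{\mathbb{H}^3}(\tilde{p}_T, 2\mu_M/5)$. Inside this ball, Schoen's second-fundamental-form bound $|\sigma| \leq \mu_1$ from the proof of Lemma \ref{muM}, combined with the relation $2\mu_M/5 \leq 1/(5\mu_1)$, allows one to represent $\Sigma \cap B_M(p_T, 2\mu_M/5)$ as a graph of small gradient over the tangent plane $T_{p_T}\Sigma$. Since a geodesic in $\mathbb{H}^3$ meets a totally geodesic plane in at most one point, the small-gradient graphical representation forces the geodesic sub-arc to meet this piece of $\Sigma$ in at most one point, contradicting the existence of $q_1 \neq q_2$. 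Combining this with the decomposition above,
$$\#(\gamma \cap \Sigma) \leq \sum_{i,T} 1 = m \cdot N \leq N \cdot \frac{\ell(\gamma)}{6\mu_M/5},$$
which matches the claimed bound.

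The main obstacle is the final geometric input: establishing that within a ball of radius $2\mu_M/5$ in $\mathbb{H}^3$, a geodesic crosses a stable minimal surface with bounded second fundamental form at most once. This requires effectively upgrading Schoen's curvature bound into a graphical representation of $\Sigma$ with controlled gradient, for which one appeals to standard minimal surface regularity together with the precise choice of $\mu_M$ in Lemma \ref{muM}. The remaining bookkeeping ingredients (convexity of small metric balls, lower bounds on distances between lifts, and the at-most-diameter $4\mu_M/5$ property of each triangle) are straightforward once the triangulation is in hand.
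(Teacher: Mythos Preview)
Your outline differs from the paper's proof in one essential respect. The paper does \emph{not} attempt to count intersections of $\gamma$ with the minimal surface $\Sigma$ itself. Instead, after triangulating $\Sigma$, it applies the straightening homotopy: each triangle $T$ is replaced by the totally geodesic triangle $T'$ in $M$ with the same vertices. Since $\Sigma$ and its straightened version are homotopic, the \emph{topological} intersection number is unchanged, and now each $T'$ lies in a totally geodesic $2$-plane inside an embedded ball $B_{\mu_M}(p_{T'})$. A geodesic segment contained in such a ball can meet a totally geodesic plane at most once; this immediately gives $\#(T'\cap\gamma)\leq \ell(\gamma)/(6\mu_M/5)$ by a length-counting argument, and summing over triangles finishes.

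Your approach tries to avoid straightening and instead argue directly that a short geodesic arc meets the \emph{minimal} surface at most once inside a small ball. The justification you offer---represent $\Sigma$ locally as a small-gradient graph over its tangent plane and invoke that a geodesic meets a plane once---does not close. A graph of small gradient is not a plane; a geodesic that is nearly tangent to $T_{p_T}\Sigma$ can in principle cross such a graph several times, and Schoen's bound on $|\sigma|$ alone gives no uniform transversality. Making this work would require a quantitative monotonicity or convexity statement about the signed distance to $\Sigma$ along the geodesic, which you have not supplied and which is considerably more delicate than the paper's straightening trick. The straightening is precisely the missing idea: it replaces a curved surface with piecewise totally geodesic pieces so that the ``one intersection'' claim becomes elementary hyperbolic geometry, at the cost of passing from geometric to topological intersection number via homotopy invariance.

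A minor bookkeeping point: partitioning $\gamma$ into sub-arcs of length at most $6\mu_M/5$ yields $m = \lceil \ell(\gamma)/(6\mu_M/5)\rceil$, which is $\geq \ell(\gamma)/(6\mu_M/5)$, not $\leq$; the paper's length-counting argument for each $T'$ avoids this off-by-one issue.
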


\begin{proof}
  By Proposition \ref{boundedgeometrysurface}, there is a triangulation of $(\Sigma,g)$ having at most  $\frac{\vol_g(\Sigma)}{V_{\frac{\mu_M}{5}}(-1)} \cdot  \frac{V_{\mu_M}(-C)}{V_{\frac{\mu_M}{5}}(-1)}$ triangles $T$, each containing a vertex $p_T$ of distance at most $\frac{2\mu_M}{5}$ from all other points of the triangle.  
Apply the ``straightening map" $\sigma_t$ \cite[\S 11.6]{Ratcliffe} to $\Sigma,$ the linear homotopy deforming every triangle $T$ from the above triangulation to a geodesic triangle in $M$ with the same vertices.  Every geodesic triangle $T' = \sigma_1(T)$ contains a vertex $p_{T'}$ of distance at most $\frac{2\mu_M}{5}$ from all other points of $T'.$  Because $\sigma_1(\Sigma)$ and $\Sigma$ are homotopic,
\begin{equation} \label{equalintersectionnumbers}
\# (\Sigma \cap \gamma) = \# (\sigma_1(\Sigma) \cap \gamma).
\end{equation}
  We bound the right side of \eqref{equalintersectionnumbers} by the cardinality of $\sigma_1(\Sigma) \cap \gamma$. 

Let $p$ be a point at which $\gamma$ intersects $T' = \sigma_1(T).$  The geodesic ball $B_{\mu_M - \epsilon}(p_{T'})$ is embedded in $M.$  The distance from $p$ to the boundary of the ball $B_{\mu_M - \epsilon}(p)$ is at least $\mu_M - \epsilon - \frac{2 \mu_M}{5} = \frac{3 \mu_M}{5} - \epsilon.$  The geodesic enters the ball, intersects $T'$ at $p,$ then exits the ball without intersecting any other points of $T'.$  Each component of $\gamma \cap B_{\mu_M - \epsilon}(p_{T'})$ which intersects $T'$ therefore has length greater than or equal to $2( \frac{3 \mu_M}{5} - \epsilon) = \frac{6\mu_M}{5} - 2 \epsilon$, and  contains only one intersection point with $T'$. Therefore,
\begin{equation} \label{intersectionsgoodcell}
\# (T' \cap \gamma) \leq \frac{\ell(\gamma)}{6 \mu_M / 5}.
\end{equation}
Summing \eqref{intersectionsgoodcell} over all cells of the cellulation proves the desired result.
\end{proof}

\begin{lemma}
Let $M$ be a closed 3-manifold.  Suppose $H_1(M,\mathbb{Z}) = \mathbb{Z} \langle \gamma_1 \rangle \oplus \cdots \oplus \mathbb{Z} \langle \gamma_k \rangle \oplus \mathrm{finite}$ and $H_2(M, \mathbb{Z}) = \mathbb{Z} \langle S_1 \rangle \oplus \cdots \oplus \mathbb{Z} \langle S_k \rangle \oplus \mathrm{finite}.$  Then some multiple of $\gamma - \sum_{i = 1}^k n_i(\gamma) \gamma_i$ bounds, where
$$n_r = \frac{\det(A_r)}{\det(S_i \cap \gamma_j)} = \frac{\det(A_r)}{\pm 1},$$
where $A_r$ is the intersection matrix $A = (S_i \cap \gamma_j)$ with $r$th column replaced by the column $(S_i \cap \gamma).$ 
\end{lemma}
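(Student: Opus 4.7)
The plan is to reduce the statement to an elementary linear algebra computation via Poincaré duality. A class $\eta \in H_1(M,\mathbb{Z})$ has finite order (equivalently, some multiple of it bounds) if and only if its image in $H_1(M,\mathbb{Z})/\mathrm{torsion}$ is zero. Since $M$ is a closed orientable $3$-manifold, Poincaré duality provides a perfect pairing
$$\bigl(H_2(M,\mathbb{Z})/\mathrm{tors}\bigr) \times \bigl(H_1(M,\mathbb{Z})/\mathrm{tors}\bigr) \;\longrightarrow\; \mathbb{Z}, \qquad (S,\alpha) \longmapsto S \cap \alpha,$$
so that $\eta \in H_1(M,\mathbb{Z})$ is torsion if and only if $S_i \cap \eta = 0$ for every generator $S_i$ of $H_2(M,\mathbb{Z})/\mathrm{tors}$.

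Applying this criterion to $\eta = \gamma - \sum_{j} n_j \gamma_j$, we see that $\eta$ is torsion precisely when the integers $n_1,\dots,n_k$ satisfy the linear system
$$\sum_{j=1}^k (S_i \cap \gamma_j)\, n_j \;=\; S_i \cap \gamma, \qquad i = 1,\dots,k.$$
In matrix form this reads $A\vec n = \vec b$, where $A = (S_i \cap \gamma_j)_{i,j}$ is the intersection matrix and $\vec b = (S_i \cap \gamma)_i$.

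Since the intersection pairing on $\bigl(H_2/\mathrm{tors}\bigr) \times \bigl(H_1/\mathrm{tors}\bigr)$ is perfect and the $\gamma_j$ and $S_i$ are each a $\mathbb{Z}$-basis of the torsion-free part, the matrix $A$ represents an isomorphism of free $\mathbb{Z}$-modules of the same rank; hence $\det(A) = \pm 1$. Cramer's rule therefore gives a unique integral solution
$$n_r \;=\; \frac{\det(A_r)}{\det(A)} \;=\; \frac{\det(A_r)}{\pm 1},$$
where $A_r$ is obtained from $A$ by replacing its $r$-th column by $\vec b = (S_i \cap \gamma)_i$, as claimed. The main content is thus Poincaré duality (specifically, the unimodularity of the intersection pairing); everything else is bookkeeping via Cramer's rule, so I do not anticipate any serious obstacle beyond citing the duality correctly.
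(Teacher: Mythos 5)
Your proof is correct and follows essentially the same route as the paper: Cramer's rule applied to the system $A\vec n = (S_i\cap\gamma)_i$ together with $\det(A)=\pm 1$. You merely make explicit what the paper leaves tacit, namely that Poincaré duality is the reason the intersection pairing is unimodular (so $A\in GL_k(\mathbb{Z})$) and the reason $\eta$ being torsion is detected by the vanishing of all $S_i\cap\eta$.
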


\begin{proof}
This follows by Cramer's rule, upon observing that $\vec{x}  =  \left(\begin{array}{c}n_1(\gamma)\\\vdots\\n_k(\gamma)\end{array}\right) $   is the unique solution to the system of equations
$$Ax = ( S_i \cap \gamma).$$
Also, $\det(A) = \pm 1$ because $A$ is an invertible integer matrix. 
\end{proof}

\begin{theorem} \label{lengthfreepart}
Let $M_0$ be a closed hyperbolic 3-manifold.  Let $M \rightarrow M_0$ be a finite cover.  Let $\gamma \in \Gamma = \pi_1(M).$  Let $H_2(M,\mathbb{Z}) = \mathbb{Z} \langle S_1 \rangle \oplus \cdots \oplus \mathbb{Z} \langle S_k \rangle \oplus \mathrm{finite}.$  Suppose that the minimal area surface $\Sigma_i$ in the homology class of $S_i$ is at most $A$ for $i = 1,\ldots,k.$  Let 

$$D := \begin{cases}  \mathrm{diam}(F_0) \cdot \left(  \frac{4k_0}{r_0} \cdot \mathrm{diam}(M) + 4k_0 + 1 \right)
&\text{ if } F \text{ is tree-type} \\ 2 \cdot \mathrm{diam}(M) &\text{ if } F \text{ is Dirichlet}.\end{cases}$$   

Then there exists $\gamma_0 \in \Gamma$ satisfying 
$$\ell(\gamma_0) \leq \left(A \cdot D \cdot \sqrt{b_1(M)} \right)^{b_1(M)} \cdot D \cdot b_1(M)$$
for which some multiple of $\gamma - \gamma_0$ bounds.
\end{theorem}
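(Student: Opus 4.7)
The plan is to combine Cramer's rule (from the lemma immediately preceding the theorem), Hadamard's inequality, and the intersection number bound of Proposition \ref{boundingintersectionnumbers}. Set $k := b_1(M)$ and define
\[
\gamma_0 \;:=\; \gamma_1^{n_1(\gamma)} \cdots \gamma_k^{n_k(\gamma)},
\]
where the integers $n_i(\gamma) \in \mathbb{Z}$ are those provided by the preceding lemma, so that some multiple of $\gamma - \gamma_0$ automatically bounds. It remains to bound each $|n_r(\gamma)|$ and assemble the estimate on $\ell(\gamma_0)$.

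The preceding lemma gives $n_r(\gamma) = \pm \det(A_r)$, where $A_r$ is the $k \times k$ intersection matrix $(S_i \cap \gamma_j)_{i,j}$ with $r$-th column replaced by $(S_i \cap \gamma)_{i=1}^k$. By Hadamard's inequality, $|\det(A_r)|$ is bounded by the product of the Euclidean norms of its columns. Each column entry is an intersection number $S_i \cap \alpha$ for some $\alpha \in \{\gamma_1, \ldots, \gamma_k, \gamma\}$. Representing each $S_i$ by a stable, properly embedded, least-area minimal surface $\Sigma_i$ of area at most $A$ (available by the discussion opening \S \ref{freepartregulators}, and automatically of bounded geometry by Schoen's curvature estimate together with Lemma \ref{muM}), Proposition \ref{boundingintersectionnumbers} yields
\[
|S_i \cap \alpha| \;\leq\; c_{M_0} \cdot A \cdot \ell(\alpha)
\]
for a constant $c_{M_0}$ depending only on $M_0$. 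Applying this with $\ell(\gamma_j) \leq D$ (by definition of $D$) and $\ell(\gamma) \leq D$ (the implicit case of interest, corresponding to $\gamma$ being a face-pairing element as in Theorem \ref{bodyupperboundpositivebettinumber}), each column of $A_r$ has Euclidean norm at most $c_{M_0} \cdot A \cdot D \cdot \sqrt{k}$, so
\[
|n_r(\gamma)| \;\leq\; \bigl( c_{M_0} \cdot A \cdot D \cdot \sqrt{k} \bigr)^k.
\]

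To conclude, choose a basepoint $q_0 \in F$ with $d(q_0, \gamma_i q_0) \leq D$ for each $i$ (possible by the geometric control of fundamental domains in \S \ref{geometrytwotypesfundamentaldomains}); telescoping via the isometric action then gives
\[
\ell(\gamma_0) \;\leq\; d(q_0, \gamma_0 q_0) \;\leq\; \sum_{i=1}^k |n_i(\gamma)| \cdot d(q_0, \gamma_i q_0) \;\leq\; k D \cdot \bigl( c_{M_0} \cdot A \cdot D \cdot \sqrt{k} \bigr)^k,
\]
which matches the claimed bound after absorbing $c_{M_0}$ into the constants.

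The main obstacle is bookkeeping rather than conceptual: correctly tracking the constant $c_{M_0}$ from Proposition \ref{boundingintersectionnumbers}, which couples the injectivity radius bound $\mu_M$, the curvature bound $C$ on minimal surfaces, and the bounded-geometry triangulation count; and verifying the telescoping estimate for the translation length of the product $\gamma_1^{n_1}\cdots\gamma_k^{n_k}$, which requires a wisely chosen basepoint inside the fundamental domain so that all generator displacements are simultaneously bounded by $D$. Both issues are essentially accounted for by the preceding results in \S \ref{geometrytwotypesfundamentaldomains} and \S \ref{freepartregulators}.
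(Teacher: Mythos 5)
Your proof matches the paper's argument: define $\gamma_0 = \gamma_1^{n_1(\gamma)}\cdots\gamma_k^{n_k(\gamma)}$, bound $|n_r(\gamma)|=|\det A_r|$ by Hadamard's inequality with column entries controlled via Proposition~\ref{boundingintersectionnumbers}, and telescope the translation length of $\gamma_0$ using $\ell(\gamma_i)\leq D$. You correctly flag the two implicit points (the case of interest $\ell(\gamma)\leq D$, and absorbing the $M_0$-dependent constant from the intersection-number bound) that the paper's terse proof leaves tacit.
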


\begin{proof}
We can find a basis $\gamma_i, i = 1,\ldots, k,$ for $H_1(M,\mathbb{Z}) / \mathrm{torsion}$ with $\gamma_i \in P_F,$ where $F$ is either a tree-type or Dirichlet fundamental domain.  By Propositions \ref{diameterboundtreetypefundamentaldomain} and \ref{upperbounddiameterdirichletdomain},   
$$\ell(\gamma_i) \leq D \text{ for } i = 1,\ldots,k.$$
Use Proposition \ref{boundingintersectionnumbers} to bound the entries of the matrix $A_r$ obtained by replacing the $r$th column of $A = (S_i \cap \gamma_j)$ with $(S_i \cap \gamma).$  The determinant is bounded above by the products of the norms of its columns.  The result follows. 
\end{proof}

%\begin{remark}
%Let $f,h$ be as in the statement of Theorem \ref{bodyupperboundpositivebettinumber}.  The average value of $||f_x||^2$ equals $\frac{1}{\vol(M)}.$  So for fixed $\gamma_0,$ it seems reasonable to expect $\int_{\gamma_0} f$ to be of size roughly $\frac{1}{\sqrt{\vol M}} \cdot \mathrm{length}(\gamma_0)$
%is potentially very big.  
%
%Suppose, for simplicity, that $H_1(M,\mathbb{Z}) = \mathbb{Z} \langle \gamma_0 \rangle \oplus \mathrm{finite}.$  If 
%$$\int_{\gamma_0} h = \int_{\gamma_0} f \approx \frac{1}{\sqrt{\vol}(M)} \mathrm{length}(\gamma_0),$$ 
%then 
%$$||h||_2 = \frac{1}{\mathrm{reg}_1(M)} \cdot \left|\int_{\gamma_0} h \right| \geq \frac{1}{\mathrm{reg}_1(M)} \cdot \frac{1}{\sqrt{\vol(M)}} \cdot \mathrm{length}(\gamma_0).$$
%
%The bounds on $n_0(\gamma)$ proven in Theorem \ref{boundingintersectionnumbers} have the same order of magnitude as $\frac{1}{\mathrm{reg}_1(M)}.$  The above heuristic therefore suggests that \emph{large regulators on hyperbolic 3-manifolds should suppress small eigenvalues}.  This is completely consistent with the Cheeger-M\"{u}ller Theorem.  \textcolor{red}{Flesh out.} 
%\end{remark}

\subsubsection{Regulator-independent upper bound for $\frac{1}{\lambda_1^1(M)_{d^\ast}}$ when $b_1(M) = 1$} \label{regulatorindependentlambda1bound}

The next Proposition proves a ``regulator-independent" upper bound for $\frac{1}{\lambda_1^1(M)_{d^\ast}}$ for closed hyperbolic 3-manifolds $M$ satisfying $b_1(M) = 1$ in terms of the stable area of loops \emph{which project trivially to} $H_1(M,\mathbb{Q}).$ 

Let $\gamma\to [\gamma]$ denote the quotient map from $\pi_1(M)$ (or even closed curves) to $H_1(M,\IQ).$ 
\begin{proposition} \label{regulatorindependentbounds}
Let $M_0$ be a closed hyperbolic 3-manifold.  Let $M \rightarrow M_0$ be a finite cover.  Suppose $H_1(M,\mathbb{Z}) = \mathbb{Z} \langle \gamma_0 \rangle \oplus \mathrm{finite},$ where $\gamma_0\in P_F$,   for $F$ either a tree-type or Dirichlet fundamental domain of $M.$  Let $H_2(M,\mathbb{Z}) = \mathbb{Z} \langle S_0 \rangle \oplus \mathrm{finite}.$  Fix a basepoint $q_0 \in M.$  

Let $\lambda = \lambda_1^1(M)_{d^\ast}.$  Fix $\delta > 0.$  There is an explicit subset $\mathcal{C}_f \subset \pi_1(M)$ which projects trivially to $H_1(M,\mathbb{Q})$ relative to which the following upper bound on $\frac{1}{\lambda}$ holds: 

\begin{align} \label{regulatorindependentboundsequation}
 \frac{1}{\sqrt{\lambda}} \cdot (1 - E) &\leq    3\pi\cdot C( \lambda)^2 \cdot  V +C( \lambda) \cdot \frac{1}{2} \cdot \sqrt{\vol(M)} \nonumber \\ 
&+ C(  \lambda)^2 \cdot \left( 2\sqrt{2} D^2 \cdot \vol(M)^{\delta + 1/2} \cdot \sup_{\gamma \in \mathcal{C}_f} \frac{\mathrm{sArea}(\alpha_{\gamma q_0,q_0})}{\ell( \gamma )} +  5\pi  \right)   . \nonumber\\
\end{align}

In \eqref{regulatorindependentboundsequation}, $V$ and $D$ are the quantities from Theorem \ref{bodyupperboundpositivebettinumber}, and they satisfy the upper bounds therein.  
%The Sobolev constant $C( \lambda)$ is defined in  \eqref{clambda}. 
 The number $E$ satisfies $0 \leq E \leq C(\lambda) \cdot \vol(M)^{-\delta}.$
\end{proposition}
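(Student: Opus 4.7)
I would begin with Proposition~\ref{lowerboundeigenvaluepositivebettinumber} specialized to $k = b_1(M) = 1$, with the chosen generator $\gamma_0 \in P_F$. Writing $p := \int_{\alpha_{q_0, \gamma_0 q_0}} f$ and letting $h$ denote the unique harmonic $1$-form with the same period on $\gamma_0$, the Proposition gives
\[
\frac{1}{\sqrt\lambda} \le 3\pi C(\lambda)^2 V + \tfrac{C(\lambda)}{2}\sqrt{\vol M} + C(\lambda)^2 \cdot \frac{\|f\|_2}{(\|f\|_2^2+\|h\|_2^2)^{1/2}} \Bigl(\sup_j \mathrm{sArea}(\alpha_{q_0, \gamma_j \gamma_0^{-n_0(\gamma_j)} q_0}) + 7\pi \Bigr),
\]
where $\gamma_j$ ranges over face-pairings of $F$ and $n_0(\gamma_j) \in \mathbb{Z}$ is the unique integer making $\gamma_j - n_0(\gamma_j)\gamma_0$ torsion in $H_1(M,\mathbb{Z})$. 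My plan is to control the third summand without invoking a bound on the smallest $H_2$-representative area $A'$, by taking $\mathcal{C}_f \subset \pi_1(M)$ to consist of the broken geodesic loops $\gamma_j \gamma_0^{-n_0(\gamma_j)}$ (each trivial in $H_1(M,\mathbb{Q})$ by construction) and trading $|n_0(\gamma_j)|$ against the damping factor.

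The first ingredient is a length estimate: a broken geodesic representative of $\gamma_j \gamma_0^{-n_0(\gamma_j)}$ has length at most $(|n_0(\gamma_j)|+1) D \le 2 |n_0(\gamma_j)| D$, yielding the trivial bound
\[
\mathrm{sArea}\bigl(\alpha_{q_0, \gamma_j\gamma_0^{-n_0(\gamma_j)} q_0}\bigr) \;\le\; 2 |n_0(\gamma_j)|\,D \cdot \sup_{\gamma \in \mathcal{C}_f}\frac{\mathrm{sArea}(\alpha_{\gamma q_0, q_0})}{\ell(\gamma)}.
\]
The second and key ingredient is a regulator-independent bound on $|n_0(\gamma_j)|$ via Sobolev for the harmonic form $h$. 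Closedness of $h$ together with the fact that the projected loop $\tilde\gamma_j$ of $\alpha_{q_0,\gamma_j q_0}$ represents the class $n_0(\gamma_j)[\gamma_0] \in H_1(M,\mathbb{Q})$ gives $n_0(\gamma_j)\,p = \int_{\tilde\gamma_j} h$, whence by Proposition~\ref{supnormeigenfunction} (applied to the harmonic form $h$, whose Sobolev constant is uniformly bounded in the regime where $\lambda$ is bounded above and $\mathrm{inj}(M)$ is bounded below)
\[
|n_0(\gamma_j)|\cdot|p| \;\le\; \|h\|_\infty\cdot \ell(\tilde\gamma_j) \;\le\; C(\lambda)\,\|h\|_2\,D.
\]
Applying the same inequality to $f$ itself gives $|p| \le C(\lambda)\|f\|_2 D$. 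Multiplying the displayed inequality by $\|f\|_2 / (\|f\|_2^2 + \|h\|_2^2)^{1/2}$ and using the Cauchy--Schwarz-type simplification $\|f\|_2\|h\|_2/(\|f\|_2^2+\|h\|_2^2)^{1/2} \le \min(\|f\|_2, \|h\|_2) \le \|f\|_2$ then produces the unified bound
\[
\frac{\|f\|_2\cdot |n_0(\gamma_j)|}{(\|f\|_2^2 + \|h\|_2^2)^{1/2}} \;\le\; \frac{C(\lambda)\,D\,\|f\|_2}{|p|}.
\]

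The argument then splits on the size of $|p|$. In the large-period regime $|p| \ge C(\lambda)D\|f\|_2\,\vol(M)^{-\delta-1/2}/\sqrt 2$ the display directly yields a bound of $\sqrt 2\,\vol(M)^{\delta+1/2}$, and combined with the length estimate produces exactly the prefactor $2\sqrt 2\,D^2\vol(M)^{\delta+1/2}$ appearing in~\eqref{regulatorindependentboundsequation}. In the complementary small-period regime the harmonic $1$-form $h$ (which is proportional to the normalized generator $h_\ast$ of $H^1_{\mathrm{harm}}(M)$, the universal bound $\|h_\ast\|_2 \ge 1/(C(\lambda)D)$ itself following from Sobolev applied to $h_\ast$ on $\gamma_0$) is forced to be globally small enough that the damping factor $\|f\|_2/\|f-h\|_2$ differs from $1$ by at most $C(\lambda)\vol(M)^{-\delta}$; this residual is exactly what $(1-E)$ absorbs on the left-hand side of~\eqref{regulatorindependentboundsequation}, with $E \le C(\lambda)\vol(M)^{-\delta}$.

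\textbf{Main obstacle.} The delicate point is the balancing in the case analysis. The large-period regime is mechanical once the Sobolev bound on $|n_0(\gamma_j)|$ is in hand. The small-period regime requires confirming that replacing the damping factor by $1$ (which effectively applies Corollary~\ref{lowerboundeigenvalue} directly to $f$ rather than to $f-h$) costs no more than $E$ uniformly in all face pairings. The whole mechanism by which regulator-independence emerges is precisely this trade-off of $|n_0(\gamma_j)|$ against $\|h\|_2/\|f\|_2$, which encodes the Poincar\'e pairing of $\gamma_j$ against $H^1(M,\mathbb{R})$; the harmonic form $h$ is controlled directly by the period $p$ of $f$, bypassing any appeal to the least-area $H_2$-representatives.
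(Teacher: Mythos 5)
Your core insight --- deriving the bound $|n_0(\gamma_j)|\cdot|p| = \bigl|\int_{\tilde\gamma_j}h\bigr| \le C(\lambda)\|h\|_2\,D$ by applying Sobolev (Proposition~\ref{supnormeigenfunction}) to the harmonic form $h$ --- is a genuine and attractive shortcut past the paper's route. The paper bounds $|n_0(\gamma_j)| = \#(S_0\cap\gamma_j)$ using stable minimal surfaces and Schoen's curvature estimate (Lemma~\ref{muM}), bounded-geometry triangulations (Propositions~\ref{boundedgeometrysurface} and~\ref{boundingintersectionnumbers}), and the Brock--Dunfield least-area characterization invoked in \eqref{leastareavsharmonic}. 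Your Sobolev estimate replaces all of that in one line. In your large-period regime the argument closes and produces the $2\sqrt 2\,D^2\,\vol(M)^{\delta+1/2}$ factor, with some slack (you also carry $7\pi$ rather than the stated $5\pi$ since you quote Proposition~\ref{lowerboundeigenvaluepositivebettinumber} verbatim rather than re-running the sharper Stokes chain the paper uses, but that is cosmetic).

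The small-period regime contains a genuine gap. You write $h = p\,h_\ast$ with $\int_{\gamma_0}h_\ast = 1$ and claim $h$ is ``forced to be globally small'' because $|p|$ is small. But $\|h\|_2 = |p|\,\|h_\ast\|_2$, and the Sobolev estimate you invoke gives only the \emph{lower} bound $\|h_\ast\|_2 \ge 1/(C(\lambda)D)$ --- the wrong direction. Nothing in your argument bounds $\|h_\ast\|_2 = 1/\mathrm{reg}_1(M)$ from above, and upper-bounding this quantity is exactly what ``regulator independence'' is designed to avoid. With $\|h_\ast\|_2$ uncontrolled, $\|h\|_2$ can dwarf $\|f\|_2$ even when $|p|$ is tiny, so the damping factor need not be near $1$; worse, $|n_0(\gamma_j)| \le C(\lambda)\|h_\ast\|_2\,D$ is then unbounded, and so is $\mathrm{sArea}\bigl(\gamma_j\gamma_0^{-n_0(\gamma_j)}\bigr)$. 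Nothing forces the sArea terms into the stated form, and $(1-E)$ does not absorb an undamped sArea term on the right-hand side.

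The paper avoids this by case-splitting on a different quantity: whether some face-pairing period $\bigl|\int_{\alpha_{q_j,\gamma q_j}}f\bigr|$ with $[\gamma]\neq 0$ exceeds $\vol(M)^{-\delta}\|f\|_2$, not whether $|p| = \bigl|\int_{\gamma_0}f\bigr|$ is small. In the ``all periods small'' case it does not introduce $h$ at all; it applies Corollary~\ref{lowerboundeigenvalue} directly to $f$, moves the small nontrivial-class periods to the left-hand side to create $(1-E)$, and only loops with $[\gamma]=0$ enter $\mathcal{C}_f$. In the ``some period big'' case it matches $h$ on the large-period element $\gamma_{\mathrm{big\;period}}$, not on $\gamma_0$, so that $\|h\|_2 \ge (A/m)\vol(M)^{-\delta}\|f\|_2$ and the $A/m$ in the damping cancels the $A/m$ in the intersection-number estimate \eqref{upperboundtranslationlengthintersectionnumbers}. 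If you adopt the paper's case split but keep your Sobolev bound for $|n_0(\gamma)|$ in place of the minimal-surface machinery, the proof does close --- and in fact yields a $\vol(M)^{\delta}$ rather than $\vol(M)^{\delta+1/2}$ prefactor --- but as written your $|p|$-based split leaves the small-$|p|$ case unresolved.
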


\begin{proof}
Let $f$ be coexact and satisfy $\Delta f = \lambda f.$ 
%By Corollary \ref{lowerboundeigenvalue}, 
If 
$$\left | \int_{\alpha_{q_j,\gamma q_j}} f \right| \leq \vol(M)^{-\delta} ||f||_2$$
for every $\gamma \in P_F$ which is non-trivial in $H_1(M,\mathbb{Z}) / \mathrm{torsion},$ then we subtract this contribution from both sides of \eqref{lowerboundeigenvalueequation}.  Then \eqref{regulatorindependentboundsequation} follows directly from Corollary \ref{lowerboundeigenvalue}, with $\mathcal{C}_f=\{\gamma \in \Gamma:[\gamma]=0\text{ and }  \ell(\gamma) \leq D\}.$  

Otherwise, choose $\gamma_{\mathrm{big \; period}} \in P_F$ from among those $\gamma$ occuring on the right side of \eqref{lowerboundeigenvalueequation} which is non-zero in $H_1(M,\mathbb{Q})$ and which satisfies  
\begin{equation} \label{bigperiod}
\left | \int_{\alpha_{q_0, \gamma_{\mathrm{big \; period}} q_0}} f \right| \geq \vol(M)^{-\delta} ||f||_2
\end{equation}     
Let $h$ be a harmonic 1-form satisfying 

\begin{equation} \label{harmonic1formmatchingperiod}
\int_{\alpha_{q_0,\gamma_{\mathrm{big \; period}} q_0}} f = \int_{\gamma_{\mathrm{big \; period}}} h.
\end{equation}

Let $[\gamma_{\mathrm{big\; period}}] = m [\gamma_0]$.
% in $H_1(M,\mathbb{Z}) / \mathrm{torsion}.$ 
 Let $A = \frac{1}{\mathrm{reg}_1(M)}.$  By definition of the regulator on 1-forms,
\begin{align} \label{matchingharmonic1formbigperiod}
||h||_2 &= \left|\int_{\gamma_0} h \right| A \nonumber \\
&= \frac{1}{m} \left| \int_{\gamma_{\mathrm{big \; period}}} h \right| \cdot A \nonumber \\
&\geq \frac{A}{m} \cdot \vol(M)^{-\delta} \cdot ||f||_2. \nonumber \\
\end{align}

Let $\gamma\in P_F$ be non-zero in $H_1(M,\mathbb{Q}).$  Let $n_0 = n_0(\gamma) = S_0 \cap \gamma$ be the unique integer for which $\gamma - n_0 \gamma_0 = 0 \in H_1(M,\mathbb{Q}).$  Then $m \gamma - n_0 \cdot \gamma_{\mathrm{big \; period}} = 0 \in H_1(M,\mathbb{Q})$ too.  Define the following three broken geodesic polygons:
\begin{itemize}
\item[($\Delta$)]
The broken geodesic $\alpha_{q_0, \gamma^m q_0} + \alpha_{\gamma^m q_0, \gamma_{\mathrm{big \; period}}^{-n_0} \cdot \gamma^m q_0} + \alpha_{\gamma_{\mathrm{big \; period}}^{-n_0} \cdot \gamma^m q_0,q_0},$ is the oriented boundary of a geodesic triangle $\Delta$ of area at most $\pi.$   

\item[($Q$)]
The broken geodesic $\alpha_{\gamma^m q_0,\gamma_{\mathrm{big \; period}}^{-n_0} \gamma^m q_0} + \alpha_{\gamma_{\mathrm{big \; period}}^{-n_0} \gamma^m q_0, \gamma_{\mathrm{big \; period}}^{-n_0} q_0} + \alpha_{\gamma_{\mathrm{big \; period}}^{-n_0} q_0,q_0} + \alpha_{q_0,\gamma^m q_0}$ forms the oriented boundary of a broken geodesic quadrilateral $Q,$ of area at most $2\pi,$ whose projection to $M$ has boundary $\alpha_{\gamma^m q_0,\gamma_{\mathrm{big \; period}}^{-n_0} \gamma^m q_0} + \alpha_{\gamma_{\mathrm{big \; period}}^{-n_0} q_0,q_0}.$  

\item[($Q'$)]
Let $\gamma = \gamma_j$ and let $q_j \in \Sigma_j$ be the reference point from Corollary \ref{lowerboundeigenvalue}.  The broken geodesic $\alpha_{q_j, \gamma q_j} + \alpha_{\gamma q_j, \gamma q_0} + \alpha_{\gamma q_0, q_0} + \alpha_{q_0, q_j}$ forms the oriented boundary of a broken geodesic quadrilateral $Q'$ of area at most $2\pi,$ whose projection to $M$ has boundary $\alpha_{q_j, \gamma q_j} - \alpha_{q_0,\gamma q_0}.$   
\end{itemize}

By three applications of Stokes,  

\begin{align} \label{periodupperboundpositivebettinumbersarea}
&\left| \int_{\alpha_{q_j, \gamma q_j}} (f-h)  \right| \nonumber \\
&\leq \left|\int_{\alpha_{q_0,\gamma q_0}} (f - h) \right| + \left| \int_{Q'} d(f-h) \right| \nonumber\\
&= \frac{1}{m} \left| \int_{\alpha_{q_0, \gamma^m q_0}} (f-h) \right| +  \left| \int_{Q'} d(f-h) \right| \nonumber \\
&\leq  \frac{1}{m} \left( \left| \int_{\alpha_{\gamma_{\mathrm{big \; period}}^{-n_0} \cdot \gamma^m q_0,q_0}} (f-h) \right| +  \left| \int_{\alpha_{\gamma^m q_0, \gamma_{\mathrm{big \; period}}^{-n_0} \gamma^m q_0}} (f-h) \right| + \left| \int_{\Delta} d(f-h)\right| \right) +  \left| \int_{Q'} d(f-h) \right| \nonumber  \\
& \leq  \frac{1}{m} \left( \left| \int_{\alpha_{\gamma_{\mathrm{big \; period}}^{-n_0} \cdot \gamma^m q_0,q_0}} (f-h) \right| +  \left| \int_{\alpha_{q_0, \gamma_{\mathrm{big \; period}}^{-n_0} q_0}} (f-h) \right| + \left| \int_Q d(f-h)  \right| + \left| \int_{\Delta} d(f-h)\right| \right) +  \left| \int_{Q'} d(f-h) \right| \nonumber \\
& \leq  \frac{1}{m} \left( \left| \int_{\alpha_{\gamma_{\mathrm{big \; period}}^{-n_0} \cdot \gamma^m q_0,q_0}} (f-h) \right| + \left| n_0 \cdot \int_{\alpha_{q_0, \gamma_{\mathrm{big \; period}} q_0}} (f-h) \right| \right) + ||df||_{\infty} \cdot \left( \frac{\mathrm{area}(Q)}{m}  +  \frac{\mathrm{area}(\Delta)}{m} + \mathrm{area(Q')} \right) \nonumber \\
&=  \frac{1}{m}   \left| \int_{\alpha_{\gamma_{\mathrm{big \; period}}^{-n_0} \cdot \gamma^m q_0,q_0}} (f-h) \right|     + ||df||_{\infty} \cdot \left( \frac{\mathrm{area}(Q)}{m} + \frac{\mathrm{area}(\Delta)}{m} + \mathrm{area}(Q') \right) \nonumber \\
&\leq  \frac{1}{m} \left| \int_{\alpha_{\gamma_{\mathrm{big \; period}}^{-n_0} \cdot \gamma^m q_0,q_0}} (f-h) \right| +  ||df||_{\infty} \cdot 5\pi  \nonumber \\
&\leq \frac{1}{m} ||df||_{\infty} \cdot \mathrm{sArea}(\alpha_{\gamma_{\mathrm{big \; period}}^{-n_0} \cdot \gamma^m q_0,q_0})  +  ||df||_{\infty} \cdot 5\pi \nonumber \\
&\leq \sqrt{\lambda} \cdot ||f||_2 \cdot C(  \lambda) \cdot \left( \frac{\ell( \gamma_{\mathrm{big \; period}}^{-n_0(\gamma)} \cdot \gamma^m)}{m} \cdot \frac{\mathrm{sArea}(\alpha_{\gamma_{\mathrm{big \; period}}^{-n_0} \cdot \gamma^m q_0,q_0})}{\ell( \gamma_{\mathrm{big \; period}}^{-n_0(\gamma)} \cdot \gamma^m)} + 5\pi  \right). \nonumber \\
\end{align}

%The passage from the third to last line to the last line above follows by exactly the same argument as the passage from \eqref{subtractingtorsionfreepart} to \eqref{firstupperboundintermsofsarea}.  The last inequality above follows from the Sobolev inequality Proposition \ref{supnormeigenfunction}.

By Proposition \ref{boundingintersectionnumbers},
\begin{align} \label{upperboundtranslationlengthintersectionnumbers}
\frac{\ell( \gamma_{\mathrm{big \; period}}^{-n_0(\gamma)} \cdot \gamma^m)}{m} &\leq \frac{1}{m} \cdot \left( m \cdot \ell(\gamma) + |n_0(\gamma)| \cdot \ell(\gamma_{\mathrm{big \; period}})  \right) \nonumber \\
&=  \ell(\gamma) + \frac{\# \left( [S_0] \cap \gamma \right)}{m} \cdot \ell(\gamma_{\mathrm{big \; period}}) \nonumber \\
&\leq \ell(\gamma) + \frac{ \inf_S \left( \mathrm{area}(S) \right) \cdot \ell(\gamma)V_{\frac{\mu_M}{5}}(-C)}{m V_{\frac{\mu_M}{5}}(-1)^2\mu_M} \cdot \ell(\gamma_{\mathrm{big \; period}}), 
\end{align}
where $S$ runs over all stable, minimal surfaces representing the homology class $[S_0].$ 

%Recall that the Thurston norm of $[S_0] \in H_2(M,\mathbb{Z})$ is defined to be \cite[\S 2.6]{BD2}
%$$|| \; [S_0] \; ||_{\mathrm{Th}} := \min\{ \chi_{-}(S) := \max (0, - \chi(S) ): S \text{ is properly embedded and represents } [S_0] \}.$$

%We can find an \emph{incompressible} surface $S$ for which $\chi_{-}(S) = ||\; [S_0] \; ||_{\mathrm{Th}}.$  Since $S$ is incompressible, there is another surface
Let $\Sigma$ be  a stable minimal surface representing $[S_0]$. 
Let $\gamma_0^\vee \in H^1(M,\mathbb{R})$ be Poincar\'{e} dual to $[S_0]$; $\gamma_0^\vee$ may be viewed as a linear map $H_1(M,\mathbb{R}) \rightarrow \mathbb{R}$ and is uniquely determined by the condition $\gamma_0^\vee(\gamma_0) = 1.$  Then 
\begin{align} \label{leastareavsharmonic}
\mathrm{area}(\Sigma) &=     \inf \{ || \alpha ||_{L^1(M)}: \alpha \in \Omega^1(M) \text{ represents } \gamma_0^\vee \}, \text{ by \cite[Lemma 3.1]{BD2}} \nonumber \\
&\leq    || h_0 ||_{L^1(M)}, \text{ for } h_0 = \text{ harmonic representative of } \gamma_0^\vee \nonumber \\
&\leq     || h_0 ||_{L^2(M)} \cdot \sqrt{\vol(M)}, \text{ by Cauchy-Schwartz} \nonumber \\
&=:  A \cdot \sqrt{\vol(M)}. 
\end{align} 
By Propositions \ref{diameterboundtreetypefundamentaldomain} and \ref{upperbounddiameterdirichletdomain},
\begin{equation} \label{shorttranslationlength}
\ell(\gamma), \ell(\gamma_{\mathrm{big \; period}}) \leq D := \begin{cases}  \mathrm{diam}(F_0) \cdot \left(  \frac{4k_0}{r_0} \cdot \mathrm{diam}(M) + 4k_0 + 1 \right)
&\text{ if } F \text{ is tree-type} \\ 2 \cdot \mathrm{diam}(M) &\text{ if } F \text{ is Dirichlet}.\end{cases}
\end{equation}
Inserting \eqref{leastareavsharmonic} and \eqref{shorttranslationlength} into \eqref{upperboundtranslationlengthintersectionnumbers} yields
\begin{align} \label{finalupperboundtranslationlength}
\frac{\ell( \gamma_{\mathrm{big \; period}}^{-n_0(\gamma)} \cdot \gamma^m)}{m} 
&\leq  \ell(\gamma) + \frac{ A \cdot \sqrt{\vol(M)}V_{\frac{\mu_M}{5}}(-C)  \cdot \ell(\gamma)}{mV_{\frac{\mu_M}{5}}(-1)^2\mu_M} \cdot \ell(\gamma_{\mathrm{big \; period}}) \nonumber\\
&\leq D^2 \cdot \left( 1 + \frac{ A \cdot \sqrt{\vol(M)}V_{\frac{\mu_M}{5}}(-C)}{mV_{\frac{\mu_M}{5}}(-1)^2\mu_M} \right).  
\end{align}

Combining \eqref{periodupperboundpositivebettinumbersarea}, \eqref{finalupperboundtranslationlength}, and \eqref{matchingharmonic1formbigperiod} implies that

\begin{align*}
&\frac{1}{ ||f - h||_2} \cdot \left|\int_{\alpha_{q_j,\gamma q_j}} (f - h) \right| \\
&= \frac{1}{(||f||_2^2 + ||h||_2^2)^{1/2}}  \cdot \left|\int_{\alpha_{q_j,\gamma q_j}} (f - h) \right| \\
&\leq \sqrt{\lambda} \cdot C(  \lambda) \cdot \frac{||f||_2}{(||f||_2^2 + ||h||_2^2)^{1/2}} \cdot \left( \frac{\ell( \gamma_{\mathrm{big \; period}}^{-n_0(\gamma)} \cdot \gamma^m)}{m} \cdot \frac{\mathrm{sArea}(\alpha_{\gamma_{\mathrm{big \; period}}^{-n_0} \cdot \gamma^m q_0,q_0})}{\ell( \gamma_{\mathrm{big \; period}}^{-n_0(\gamma)} \cdot \gamma^m)} + 5\pi  \right) \\
&\leq \sqrt{\lambda} \cdot C(  \lambda) \cdot \left(  \left( \frac{||f||_2}{(||f||_2^2 + ||h||_2^2)^{1/2}} \cdot \frac{\ell( \gamma_{\mathrm{big \; period}}^{-n_0(\gamma)} \cdot \gamma^m)}{m} \right) \cdot \frac{\mathrm{sArea}(\alpha_{\gamma_{\mathrm{big \; period}}^{-n_0} \cdot \gamma^m q_0,q_0})}{\ell( \gamma_{\mathrm{big \; period}}^{-n_0(\gamma)} \cdot \gamma^m)} + 1 \cdot 5\pi  \right) \\
&\leq \sqrt{\lambda} \cdot C( \lambda) \cdot \left(  \left( \frac{||f||_2}{(||f||_2^2 + ||h||_2^2)^{1/2}} \cdot  D^2 \cdot \left( 1 + \frac{2A \cdot \sqrt{\vol(M)}}{m} \right) \right) \cdot \frac{\mathrm{sArea}(\alpha_{\gamma_{\mathrm{big \; period}}^{-n_0} \cdot \gamma^m q_0,q_0})}{\ell( \gamma_{\mathrm{big \; period}}^{-n_0(\gamma)} \cdot \gamma^m)} + 5\pi  \right) \\
&\leq \sqrt{\lambda} \cdot C(  \lambda) \cdot \left(  \left( \sqrt{2} \cdot \frac{||f||_2}{||f||_2 + ||h||_2} \cdot  D^2 \cdot \left( 1 + \frac{2A \cdot \sqrt{\vol(M)}}{m} \right) \right) \cdot \frac{\mathrm{sArea}(\alpha_{\gamma_{\mathrm{big \; period}}^{-n_0} \cdot \gamma^m q_0,q_0})}{\ell( \gamma_{\mathrm{big \; period}}^{-n_0(\gamma)} \cdot \gamma^m)} + 5\pi  \right) \\
&\leq \sqrt{\lambda} \cdot C(  \lambda) \cdot \left(  \left(  \frac{\sqrt{2} D^2 \cdot \left( 1 + \frac{2A \cdot \sqrt{\vol(M)}}{m} \right)}{1 + \frac{A \cdot \vol(M)^{-\delta}}{m}} \right) \cdot \frac{\mathrm{sArea}(\alpha_{\gamma_{\mathrm{big \; period}}^{-n_0} \cdot \gamma^m q_0,q_0})}{\ell( \gamma_{\mathrm{big \; period}}^{-n_0(\gamma)} \cdot \gamma^m)} +  5\pi  \right) \\
&\leq \sqrt{\lambda} \cdot C(  \lambda) \cdot \left( 2\sqrt{2} D^2 \cdot \vol(M)^{\delta + 1/2} \cdot \frac{\mathrm{sArea}(\alpha_{\gamma_{\mathrm{big \; period}}^{-n_0} \cdot \gamma^m q_0,q_0})}{\ell( \gamma_{\mathrm{big \; period}}^{-n_0(\gamma)} \cdot \gamma^m)} + 5\pi  \right). \\
\end{align*}

Using this final inequality to bound the period integrals from Corollary \ref{lowerboundeigenvalue}, the Proposition follows for 
\begin{align*}
\mathcal{C}_f &= \{\gamma \in \Gamma: \ell(\gamma) \leq D, \gamma \text{ projects trivially to } H_1(M,\mathbb{Q})  \} \\
&\bigcup \{ \gamma_{\mathrm{big \; period}}^{-n_0(\gamma)} \cdot \gamma^{m(\gamma)}: \ell(\gamma) \leq D, \gamma \text{ projects non-trivially to } H_1(M,\mathbb{Q})   \}.
\end{align*}

\end{proof}

\section{Comparing combinatorial and Riemannian $L^p$-norms on the Whitney complex} \label{whitneycomplex}
\subsubsection{Notation and setup}
Let $M_0$ be a closed hyperbolic $n$-manifold and $M \xrightarrow{\pi} M_0$ an arbitrary finite cover.  Let $K_0$ be a triangulation of $M_0.$ We define an integer valued distance function on the $n$-simplices by defining for $\sigma\not = \tau$, $d(\sigma,\tau) = 1$ if $\sigma\cap\tau\not = \emptyset.$  The triangle inequality yields a unique minimal integer valued extension. For every top degree simplex  $\sigma,$ let $B_r(\sigma)$ denote those simplices at distance at most $r$ from $\sigma$.  Assume that $K_0$ is fine enough so that $B_2(\sigma)$ is contained in an embedded geodesic ball for every top degree simplex $\sigma.$  

 Let $C^q(M;K)$ denote the space of real-valued cochains of the triangulation $K.$  Let $C_q(M; K)$ denote the space of real chains.  We denote cochains by Greek letters $\alpha, \beta,\ldots$ and chains by Roman letters $a,b,\ldots$  %Denote by $\bullet^\vee: C_q(M;K) \rightarrow C^q(M;K)$ the linear extension of the map sending a cell $c$ to the indicator cochain $\mathbf{1}_c.$  We abuse notation and also let $\bullet^\vee$ to denote ``the same" map from $C^q(M;K) \rightarrow C_q(M;K).$} 

Let $K = \pi^{-1}(K_0)$ - the pulled-back triangulation of $M.$  For every $q$-cell $c \in K,$ let $\mathbf{1}_c$ denote the dual cochain.  Let $W$ denote the Whitney map \cite{RS} 
$$W: C^q(M; K) \rightarrow \Omega^q(M).$$
By construction, for each cell $c$,
\begin{align}\int_cW(\mathbf{1}_c) = 1.
\end{align}
Hence, the Whitney map respects the duality between chains and cochains. 

\subsubsection{Norms on cochains}
The cochain spaces $C^q(M;K)$ have two natural families of norms: 
\begin{align*}
||\gamma||_{p,M} &:= || W(\gamma) ||_{p,M} \\
\left|\left| \sum_i a_i \mathbf{1}_{c_i} \right| \right|_{p, \mathrm{comb}} &:= \left( \sum_i |a_i|^p \right)^{1/p}
\end{align*}
Above, $|| \cdot ||_{p,M}$ denotes the Riemannian $L^p$ norm on $\Omega^q(M).$ 

Because all norms are equivalent on finite dimensional vector spaces, for $1\leq p,m\leq \infty$, there exists  $A_{p,m,M_0}$ so that 
\begin{align}\label{pmcompare}A^{-1}_{p,m,M_0}\|\gamma\|_{m,M_0}\leq \|\gamma\|_{p,\mathrm{comb}}\leq A_{p,m,M_0}\|\gamma\|_{m,M_0} \,\,\forall \gamma \in C^q(M_0;K_0).
\end{align}

%\subsubsection{Two dualities between $C^q(M;K)$ and $C_q(M;K)$}
%There are two natural pairings between $C^q(M;K)$ and $C_q(M;K).$  One is the evaluation map $\mathrm{ev}: C_q(M;K) \times C^q(M;K) \rightarrow \mathbb{R}$ between $C_q(M;K)$ and its dual vector space $C^q(M;K).$  The second is the pairing
%&
%\begin{align*}
%B_{\mathrm{int}}: C_q(M;K) \times C^q(M;K) &\rightarrow \mathbb{R} \\
%(c,\gamma) &\mapsto \int_c W(\gamma).
%\end{align*}

%\begin{lemma} \label{whitneydualityvscanonicalduality}
%There is a constant $E_q$ depending only on $q$ for which 
%$$B_{\mathrm{int}} = E_q \cdot \mathrm{ev}.$$
%\end{lemma}

\subsubsection{Norms on chains} \label{chainnorm}
Let $V,V'$ be two finite dimensional real vector spaces together with a perfect bilinear pairing $B: V \times V' \rightarrow \mathbb{R}.$  Suppose $V$ is equipped with norm $|| \cdot ||.$  We can define a norm on $V'$ by
\begin{align}\label{normdef}||w||^{B} := \sup_{0 \neq v \in V} \frac{\left| B(v,w) \right|}{||v||}.\end{align}
With this choice of norm
\begin{align*}
V' &\rightarrow V^\vee \\
w &\mapsto B(\cdot,w)
\end{align*}
is isometric, where $V^\vee$ denotes the dual space of $V$ equipped with its usual dual norm.  This procedure for defining a norm is \emph{reflexive} in the sense that  
$$||v|| = \sup_{0 \neq w \in V'} \frac{\left|B(v,w) \right|}{||w||^{B}} \text{ for all } v \in V.$$
With $V=C^q(M;K) $, $V'=C_q(M;K)$, and $B$ the canonical evaluation pairing, we denote by $\|\cdot\|_{p',\mathrm{comb}}$  (respectively $\|\cdot\|_{p',M}$) the norms induced on $C_q(M;K)$ by the norms $\|\cdot\|_{p,\mathrm{comb}}$  (respectively $\|\cdot\|_{p,M}$)  on $C^q(M;K),$ with $\frac{1}{p}+\frac{1}{p'} = 1.$ For the combinatorial norm, this gives the expcted 
$$\left|\left|\sum_i a_i c_i \right|\right|_{p',\mathrm{comb}} := \left(\sum |a_i|^{p'} \right)^{1/p'},$$
but the norms dual to the Whitney norms do not admit such a simple expression. 

\subsection{Comparing the Riemannian $L^2$ and $L^\infty$ norms on the Whitney complex}
\begin{define} The combinatorial support of a cochain $c= \sum_i a_i \mathbf{1}_{c_i}$ is $\cup_{a_i\not = 0}c_i.$
\end{define}
The Whitney map satisfies the property that if $c_j$ is a $q$ cell contained in the $n$ cell $\sigma$, then the support of $W(\mathbf{1}_{c_j})$ is contained in $B_1(\sigma)$. The following lemma shows that combinatorial localization is bounded. 
 
\begin{lemma} \label{operatornormnonorthogonalprojection}
Let $\sigma$ be a top degree cell of $K.$  Define the projection 
\begin{align*}
P_{\mathrm{comb}} : ( C^q(M;K), ||\cdot||_{2,M}) &\rightarrow (C^q(M;K), ||\cdot||_{2,M}) \\
\sum_i x_i \mathbf{1}_{c_i} &\mapsto \sum_{i: c_i \cap \sigma\not =\emptyset} x_i \mathbf{1}_{c_i},
\end{align*}
which is orthogonal with respect to the combinatorial-$L^2$ inner product. There exists $D_{M_0}>0$ (depending only on $M_0$) such that $\|P_{\mathrm{comb}}\| <D_{M_0}$.  
\end{lemma}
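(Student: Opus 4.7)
The plan is to exploit the local uniformity afforded by the cover $M \to M_0$: every top-dimensional simplex $\tau$ of $K$ is isometric, together with the combinatorial star of its faces, to a top-dimensional simplex of $K_0$, so any local norm comparisons inherited from $K_0$ hold uniformly on $M$ with constants depending only on $M_0$.

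First I will record the key cellwise equivalence. Writing $\gamma = \sum_i x_i \mathbf{1}_{c_i} \in C^q(M;K)$, on each top-dimensional simplex $\tau$ we have $W(\gamma)|_\tau = \sum_{c_i \subset \tau} x_i \, W(\mathbf{1}_{c_i})|_\tau$, and the Whitney forms $W(\mathbf{1}_{c_i})|_\tau$ associated to the $q$-dimensional faces $c_i$ of $\tau$ are linearly independent in $\Omega^q(\tau)$ (a standard property from \cite{RS}). Therefore the $L^2(\tau)$-norm of $W(\gamma)|_\tau$ and the Euclidean norm $\left( \sum_{c_i \subset \tau} |x_i|^2 \right)^{1/2}$ on its coefficients are equivalent, with comparison constants $a, b > 0$ depending only on the isometry type of $\tau$. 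Because $\pi:M \to M_0$ is a local isometry and $K_0$ has only finitely many isometry types of top-dimensional simplices, the constants $a, b$ may be chosen uniformly in $M$.

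Second, applying the upper comparison on each top-dimensional simplex gives
\begin{equation*}
\|P_{\mathrm{comb}} \gamma\|_{2,M}^2 \;=\; \sum_\tau \|W(P_{\mathrm{comb}}\gamma)\|_{2,\tau}^2 \;\leq\; b \cdot N_0 \sum_{c_i \cap \sigma \neq \emptyset} |x_i|^2,
\end{equation*}
where $N_0$ is a uniform upper bound (depending only on $M_0$) for the number of top simplices containing any fixed $q$-cell of $K$. For each $c_i$ with $c_i \cap \sigma \neq \emptyset$, pick a top-dimensional simplex $\tau(c_i)$ in the combinatorial star of $\sigma$ containing $c_i$; applying the lower comparison yields
\begin{equation*}
\sum_{c_i \cap \sigma \neq \emptyset} |x_i|^2 \;\leq\; \sum_{\tau \in \mathrm{Star}(\sigma)} \sum_{c_i \subset \tau} |x_i|^2 \;\leq\; \frac{1}{a} \sum_{\tau \in \mathrm{Star}(\sigma)} \|W(\gamma)\|_{2,\tau}^2 \;\leq\; \frac{1}{a} \|\gamma\|_{2,M}^2.
\end{equation*}
Combining the two estimates gives $\|P_{\mathrm{comb}}\gamma\|_{2,M} \leq \sqrt{b N_0 / a}\cdot\|\gamma\|_{2,M}$, so one sets $D_{M_0} := \sqrt{b N_0 / a}$.

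There is no substantive obstacle: the argument reduces to (i) the standard fact that the Whitney forms of the $q$-faces of a top simplex are linearly independent on that simplex, forcing the Riemannian and combinatorial norms to agree up to constants, and (ii) the uniformity of local combinatorial and metric structure in the cover $K = \pi^{-1}(K_0)$. The only care needed is to observe that the multiplicity $N_0$ and the cardinality of $\mathrm{Star}(\sigma)$ are uniformly bounded in terms of $M_0$ alone, which is again immediate from $\pi$ being a local isometry on simplices.
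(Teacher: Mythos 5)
Your proof is correct, but it takes a genuinely different route from the paper's. The paper decomposes the cochain $z$ into a near part $z_1 = P_{\mathrm{comb}}(z)$, a buffer $z_2$ supported in $B_2(\sigma)\setminus B_1(\sigma)$, and a far part $z_3$, so that $W(z_1)$ and $W(z_3)$ are separated on either side of $B_2(\sigma)$. It then defines a quotient norm on $z_1$ by infimizing $\|W(z_1)+\chi_{B_2(\sigma)}W(z_2)\|_{2,M}$ over $z_2$, verifies this is a norm via linear independence of $W(z_1)$ and $\chi_{B_2(\sigma)}W(z_2)$ (here is where the hypothesis that $B_2(\sigma)$ sits inside an embedded geodesic ball is used), and closes with equivalence of norms on a finite-dimensional space together with the uniformity afforded by $M_0$. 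Your argument works simplex by simplex: the Whitney $q$-forms of the $q$-faces of a single top simplex $\tau$ are linearly independent on $\tau$, so $\|W(\gamma)|_\tau\|_{L^2(\tau)}$ is two-sidedly comparable to $\bigl(\sum_{c_i\subset\tau}|x_i|^2\bigr)^{1/2}$ with constants controlled by the isometry type of $\tau$; summing over $\tau$ and bounding the multiplicity $N_0$ with which any $q$-cell is counted produces the operator norm bound directly. Your version is more concrete and arguably cleaner: it avoids both the buffer-zone decomposition and the abstract quotient-norm/finite-dimensionality step, and the per-simplex linear independence it requires is a more elementary and localized input than the linear independence of $W(z_1)$ and $\chi_{B_2(\sigma)}W(z_2)$. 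What the paper's formulation buys in exchange is that it packages the estimate as a single quotient-norm comparison, which mirrors the style of the surrounding norm-comparison lemmas. Both routes deliver a constant depending only on $M_0$, via the local isometry $\pi$ and the finiteness of $K_0$.
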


\begin{proof}
\begin{align} \label{operatornormnonorthogonalprojection1}
||P_{\text{comb}}|| &= \sup_{0 \neq z } \frac{|| W(P_{\text{comb}}(z)) ||_{2,M}}{||W(z)||_{2,M }}.
\end{align}% \nonumber \\
%&= \sup_{0 \neq u \in U} \frac{||u||}{\mathrm{dist}(u,V)} \nonumber \\ 
%&= \sup_{0 \neq u \in U} \frac{1}{\sin \theta_u} \nonumber \\
%&= \sup_{0 \neq u \in U} \frac{1}{\sqrt{1 - \cos^2 \theta_u}} \nonumber \\
%&= \sup_{0 \neq u \in U} \frac{1}{\sqrt{1 - \left( \frac{ \langle u, \pi_V(u) \rangle }{||u|| \; ||\pi_V(u)||} \right)^2 }},
Write $z = z_1+z_2+z_3$, where $z_1=P_{\text{comb}}(z)$, 
$z_2$ has combinatorial support in $B_2(\sigma)\setminus B_1(\sigma)$, and $z_3$ has combinatorial support in $B_2(\sigma)^c$.  Then 
$$\|W(z)\|^2_{2,M}=\|W(z_1)+ \chi_{B_2(\sigma)} W(z_2)\|^2_{2,M}+\|W(z_3)+(1- \chi_{B_2(\sigma)} )W(z_2)\|^2_{2,M},$$
where $\chi_A$ denotes the characteristic function of $A$.
% \textcolor{red}{$W(1_c)$ is supported on the union of all top degree simplices containing $c$ entirely (right?).  So if $c$ has combinatorial support in $B_2(\sigma) \backslash B_1(\sigma)$ - which I'm interpreting as ``is contained in a top degree simplex of $B_2(\sigma)$ but is not contained in a top degree simplex of $B_2(\sigma)$" - isn't $W(1_c)$ supported on $B_2(\sigma) \backslash B_1(\sigma),$ in which case $\chi_{B_1(\sigma)} \cdot W(1_c) = 0$?  Should $\chi_{B_1(\sigma)}$ above and below be replaced by $\chi_{B_2(\sigma)}$?}
 Hence 
$$\|W(z)\|^2_{2,M}\geq \inf \{ \|W(z_1)+ \chi_{B_2(\sigma)}W(z_2)\|^2_{2,M} :z_2\text{ has combinatorial support in }B_2(\sigma)\setminus B_1(\sigma) \}.$$
The right hand side of the preceding inequality is nonzero for $z_1\not =0$  because $W(z_1)$ and $\chi_{B_2(\sigma)}W(z_2)$ are linearly independent.  Hence, it defines a new norm $\|\cdot\|_{\mathrm{quot}}$ (and associated inner product structure) on the cochains with combinatorial support in $B_1(\sigma)$. Since the vector space is finite dimensional, there exists $C>0$ such that 
$$\|W(z_1)\|_{2,M}\leq C\|z_1\|_{\mathrm{quot}}.$$ 
The assumption that $B_2(\sigma)$ is contained in an embedded geodesic ball implies that $\frac{1}{C}=: D_{M_0}$ depends only on $K_0$.  Thus, \begin{align} \label{operatornormnonorthogonalprojection2}
||P_{\text{comb}}|| &\leq  \sup_{0 \neq P_{\text{comb}}(z) } \frac{|| W(P_{\text{comb}}(z)) ||_{2,M }}{||P_{\text{comb}}(z)||_{\mathrm{quot} }} \leq   D_{M_{0}}.
\end{align} 
\end{proof}

\begin{proposition} \label{comparisonswithwhitneysupnorm}
There exists $c_{M_0}, C_{M_0}>0$ such that for all cochains $\gamma$,
\begin{align}\label{boundingsupnormbylinftynorm}\|\gamma\|_{\infty,M}\leq c_{M_0}\| \gamma \|_{\infty,\mathrm{comb}}.
\end{align}
\begin{align}\label{boundingsupnormbyl2normofprojection}\|\gamma\|_{\infty,M}\leq C_{M_0}\|\gamma\|_{2,M}.
\end{align}
\end{proposition}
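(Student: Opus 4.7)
The plan is to prove the two inequalities in order, using the first to reduce the second to a purely combinatorial statement on a finite-dimensional subspace.

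For \eqref{boundingsupnormbylinftynorm}, write $\gamma = \sum_i a_i \mathbf{1}_{c_i}$ so that $W(\gamma) = \sum_i a_i W(\mathbf{1}_{c_i})$. The Whitney form $W(\mathbf{1}_{c_i})$ is supported in the union of top-degree cells containing $c_i$, so at any point $p$ in the interior of a top-degree simplex $\sigma$ the only contributing terms come from $q$-faces $c_i$ of $\sigma$, a set of size $\binom{n+1}{q+1}$. Since the triangulation $K$ is pulled back from $K_0$, every top-degree simplex of $M$ is isometric to one of the finitely many top simplices of $K_0$; consequently the pointwise norms $\|W(\mathbf{1}_{c_i})\|_{\infty}$ admit a uniform upper bound depending only on $K_0$. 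Combining these gives $|W(\gamma)(p)| \leq c_{M_0}\|\gamma\|_{\infty,\mathrm{comb}}$ for a constant depending only on $M_0$.

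For \eqref{boundingsupnormbyl2normofprojection}, by \eqref{boundingsupnormbylinftynorm} it suffices to bound $\|\gamma\|_{\infty,\mathrm{comb}}$ by $\|\gamma\|_{2,M}$. Choose a $q$-cell $c_{i_0}$ at which the combinatorial sup norm is achieved, and let $\sigma$ be a top-degree simplex of $K$ meeting $c_{i_0}$. The projection $P_{\mathrm{comb}}$ from Lemma \ref{operatornormnonorthogonalprojection} retains the coefficient $a_{i_0}$, hence
\[
\|\gamma\|_{\infty,\mathrm{comb}} = |a_{i_0}| \leq \|P_{\mathrm{comb}}(\gamma)\|_{\infty,\mathrm{comb}}.
\]
The cochain $P_{\mathrm{comb}}(\gamma)$ lies in the finite-dimensional vector space $V_\sigma$ of cochains combinatorially supported in cells meeting $\sigma$. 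Because $K$ is the pullback of $K_0$, the local combinatorial and geometric structure of a neighbourhood of $\sigma$ depends only on the image simplex $\pi(\sigma)\in K_0$, of which there are finitely many. Norm equivalence on each of these finitely many finite-dimensional models yields a uniform constant $C'$ (depending only on $K_0$) such that
\[
\|P_{\mathrm{comb}}(\gamma)\|_{\infty,\mathrm{comb}} \leq C'\,\|P_{\mathrm{comb}}(\gamma)\|_{2,M}.
\]
Applying Lemma \ref{operatornormnonorthogonalprojection} to control $\|P_{\mathrm{comb}}(\gamma)\|_{2,M} \leq D_{M_0}\|\gamma\|_{2,M}$ and chaining with \eqref{boundingsupnormbylinftynorm} yields \eqref{boundingsupnormbyl2normofprojection} with $C_{M_0} = c_{M_0}C'D_{M_0}$.

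The only delicate point is the uniformity of the finite-dimensional norm-equivalence constant $C'$ over all top simplices $\sigma$ of the cover $M$. This is what forces us to bring in both (i) the fact that $K$ pulls back from the fixed triangulation $K_0$, so that only finitely many local combinatorial types of $V_\sigma$ occur, and (ii) Lemma \ref{operatornormnonorthogonalprojection}, which handles the comparison between $\|P_{\mathrm{comb}}(\gamma)\|_{2,M}$ and $\|\gamma\|_{2,M}$ even though $P_{\mathrm{comb}}$ is not orthogonal with respect to the Riemannian $L^2$ inner product. Once these are in hand, the proof is essentially bookkeeping.
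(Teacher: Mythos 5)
Your argument is correct and is essentially the paper's proof: both deduce \eqref{boundingsupnormbyl2normofprojection} by localizing to the combinatorial neighborhood of a top simplex, invoking finite-dimensional norm equivalence on the finitely many local models inherited from $K_0$, and then applying Lemma \ref{operatornormnonorthogonalprojection} to pass back to $\|\gamma\|_{2,M}$. The only cosmetic difference is that you route through the combinatorial $\ell^\infty$ norm and then chain with \eqref{boundingsupnormbylinftynorm}, whereas the paper compares the Riemannian $L^\infty$ and $L^2$ norms of the localized piece directly; the constants and the logical structure are the same.
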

\begin{proof}
Let $\gamma = \sum_i a_i \mathbf{1}_{c_i} \in C^q(M; K)$ be a cochain.  Suppose that $|W(\gamma)|$ attains its sup at $p \in s$, for some $n$-simplex $s.$  
Then
\begin{equation*}
||\gamma||_{\infty,M} = \left|\left|\sum_{i: c_i\cap s \not = \emptyset} a_i W(\mathbf{1}_{c_i}) \right|\right|_{\infty,M}.
\end{equation*}   
Hence 
\begin{equation}\label{inftycomb}
||\gamma||_{\infty,M} \leq  c_{M_0}\|\gamma\|_{\infty,\text{comb}},
\end{equation}
with 
\begin{equation}\label{cm0}c_{M_0}:=\sup_s \left| \left| \sum_{i: c_i\cap s \not = \emptyset}   W(\mathbf{1}_{c_i}) \right| \right|_{\infty,M}.
\end{equation}
By our assumption that $B_2(\pi(s))$ is embedded,
\begin{align} 
\left| \left|\sum_{i:   c_i\cap s \not = \emptyset} a_i W(\mathbf{1}_{c_i}) \right| \right|_{\infty,M} &= \left|\left|\sum_{i:  \pi(c_i)\cap\pi(s)\not = \emptyset} a_i W(\mathbf{1}_{\pi(c_i)}) \right| \right|_{\infty,M_0} \nonumber \\
&\leq    A_{\infty,2,M_0} \left| \left|\sum_{i: \pi(c_i)\cap\pi(s)\not = \emptyset} a_i W(\mathbf{1}_{\pi(c_i)}) \right| \right|_{2,M_0}  \nonumber \\
&=   A_{\infty,2,M_0} \left| \left|\sum_{i: c_i\cap s \not = \emptyset} a_i W(\mathbf{1}_{c_i}) \right| \right|_{2,M}\\
&\leq D_{M_0} A_{\infty,2,M_0} ||\gamma||_{2,M}. 
\end{align}
   Here we have used equation \eqref{pmcompare}  and Lemma \ref{operatornormnonorthogonalprojection}.
\end{proof}

%\textcolor{red}{REMOVE THIS
%The universal bounds
%\begin{equation*}
%\vol(M)^{-1/2} ||\omega||_{1,M} \leq||\omega||_{2,M} \leq ||\omega||_{\infty,M} \vol(M)^{1/2} \text{ for all } \omega \in \Omega^q(M)
%\end{equation*}
%implies the bounds
%\begin{equation} \label{finalnormcomparisonwhitneycomplex}
% \vol(M)^{-1/2} ||\gamma||_{2,M} \leq ||\gamma||_{\infty,M} \leq C_{M_0} ||\gamma||_{2,M} \text{ for all } \gamma \in C^q(M;K).
%\end{equation}
%Dually, we have 
%\begin{equation} \label{finalnormcomparisonwhitneycomplex1}
% C_{M_0}^{-1} ||c||_{2,M} \leq ||c||_{1,M} \leq \vol(M)^{1/2} ||c||_{2,M} \text{ for all } c \in C_q(M;K).
%\end{equation}}

\subsection{Comparing the Riemannian and combinatorial $L^p$-norms on the Whitney complex}
\begin{proposition} \label{whitneycombinatorialequivalence}
%Let $\gamma = \sum a_i \mathbf{1}_{c_i}\in C^q(M,K).$
There are inequalities
\begin{align}\label{whitneysupnormcomparison}
   A_{\infty,\infty,M_0}^{-1}\| \gamma \|_{\infty,\mathrm{comb}}\leq ||\gamma ||_{\infty,M}\leq c_{M_0}\| \gamma \|_{\infty,\mathrm{comb}} \text{ for all } \gamma \in C^q(M;K) 
\end{align}
and 
\begin{align}\label{whitneysupnormcomparison}
    c_{M_0}^{-1}\|c\|_{1,\mathrm{comb}}\leq ||c||_{1,M}\leq A_{\infty,\infty,M_0}\|c\|_{1,\mathrm{comb}} \text{ for all } c \in C_q(M;K). 
\end{align}
Above, $c_{M_0}$ denotes the constant from Proposition \ref{comparisonswithwhitneysupnorm}.  
\end{proposition}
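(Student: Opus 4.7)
The plan is to prove the two cochain inequalities first and then obtain the chain inequalities by duality. The upper bound $\|\gamma\|_{\infty,M}\le c_{M_0}\|\gamma\|_{\infty,\mathrm{comb}}$ has already been recorded as \eqref{boundingsupnormbylinftynorm} in Proposition \ref{comparisonswithwhitneysupnorm}, so the real content is the lower bound $\|\gamma\|_{\infty,\mathrm{comb}}\le A_{\infty,\infty,M_0}\|\gamma\|_{\infty,M}$. The guiding idea is that both norms are intrinsically local: each is a maximum over top-dimensional simplices. Consequently the comparison constant found globally on $M_0$ in \eqref{pmcompare} should transfer verbatim to $M$ via the covering $\pi$.

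To make this precise, for each top-dimensional simplex $\sigma\in K$ I would introduce the ``localized'' cochain $\gamma_\sigma:=\sum_{c\subset\sigma}a_c\mathbf{1}_c$. Because the support of $W(\mathbf{1}_c)$ meets $\mathrm{int}(\sigma)$ only when $c\subset\sigma$, one obtains $W(\gamma)|_{\mathrm{int}(\sigma)}=W(\gamma_\sigma)|_{\mathrm{int}(\sigma)}$, and hence
\[
\|\gamma\|_{\infty,M}=\max_\sigma\|W(\gamma_\sigma)|_\sigma\|_\infty.
\]
Since every simplex of $K$ is a face of some top-simplex, analogously $\|\gamma\|_{\infty,\mathrm{comb}}=\max_\sigma\|\gamma_\sigma\|_{\infty,\mathrm{comb}}$. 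On each $\sigma$, the linear map $\gamma_\sigma\mapsto W(\gamma_\sigma)|_\sigma$ is an injection of finite-dimensional vector spaces, so it admits a comparison constant $\rho_\sigma$ which depends only on the isometry class of the pair $(\sigma,\text{its faces})$. Because $K=\pi^{-1}(K_0)$ and $\pi$ is a local isometry, $\rho_\sigma=\rho_{\pi(\sigma)}$, whence $\max_{\sigma\in K}\rho_\sigma=\max_{\sigma\in K_0}\rho_\sigma$. Running the same argument on $M_0$ lets me identify this common maximum with the constant $A_{\infty,\infty,M_0}$ supplied by \eqref{pmcompare}.

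The chain inequalities will then follow essentially for free. By the construction \eqref{normdef} of the norms on $C_q(M;K)$ via the evaluation pairing, any cochain inequality of the form $\|\gamma\|_{\infty,\star}\le K\|\gamma\|_{\infty,\bullet}$ dualizes to $\|c\|_{1,\bullet}\le K\|c\|_{1,\star}$. Dualizing the two cochain bounds therefore immediately gives $c_{M_0}^{-1}\|c\|_{1,\mathrm{comb}}\le\|c\|_{1,M}$ and $\|c\|_{1,M}\le A_{\infty,\infty,M_0}\|c\|_{1,\mathrm{comb}}$, as claimed.

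The main obstacle I foresee is a matter of bookkeeping: one must justify that the constant $A_{\infty,\infty,M_0}$ appearing in \eqref{pmcompare}, which is stated there merely as \emph{some} global norm-equivalence constant on $M_0$, can legitimately be taken to be the simplex-wise maximum $\max_{\sigma\in K_0}\rho_\sigma$. This is itself a consequence of running the localization argument on $M_0$ alone, so no new ingredients beyond pinning down the value of $A_{\infty,\infty,M_0}$ before transferring it to $M$ are required.
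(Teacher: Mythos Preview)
Your proposal is correct and follows the same localize-to-top-simplices-then-dualize strategy as the paper. The paper's only variation is that, rather than introducing per-simplex constants $\rho_\sigma$, it pushes each localized cochain $\sum_{i:\,c_i\cap s\neq\emptyset} a_i\mathbf{1}_{c_i}$ down to $M_0$ via the isometric embedding of $B_2(\pi(s))$ and applies \eqref{pmcompare} directly as a global inequality on $M_0$; this route sidesteps the bookkeeping concern you flag about identifying $\max_\sigma\rho_\sigma$ with $A_{\infty,\infty,M_0}$.
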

\begin{proof}
Let $\gamma = \sum a_i \mathbf{1}_{c_i}\in C^q(M,K).$  Arguing as in the previous subsection:
\begin{align*} 
||\gamma||_{\infty,M} &= \max_{\text{top degree cells } s} \left| \left|\sum_{i:  c_i\cap s \not = \emptyset} a_i W(\mathbf{1}_{c_i}) \right|\right|_{\infty,M} \\
&= \max_{\text{top degree cells } s} \left|\left|\sum_{i:  \pi(c_i)\cap \pi(s) \not = \emptyset} a_i W(\mathbf{1}_{\pi(c_i)}) \right| \right|_{\infty,M_0} \\
&\geq A_{\infty,\infty,M_0}^{-1}\max_{\text{top degree cells } s} \max_{i:  \pi(c_i)\cap \pi(s) \not = \emptyset} |a_i| \\
&= A_{\infty,\infty,M_0}^{-1}\|\gamma\|_{\infty,\text{comb}} . 
\end{align*}
Hence
\begin{align}\label{whitneysupnormcomparison}
   A_{\infty,\infty,M_0}^{-1}\|\gamma\|_{\infty,\text{comb}}\leq ||\gamma||_{\infty,M}\leq c_{M_0}\|\gamma\|_{\infty,\text{comb}} . 
\end{align} 
The statement relating combinatorial and de Rham $L^1$-norms on chains follows by duality.   
\end{proof}

% Also, 
%$$||W(c)||_{\infty,M} \leq \left(\max_{\sigma = q\text{-cell of } M_0}  ||W(\mathbf{1}_\sigma)||_{\infty, M_0} \right) \sum_i |a_i| = \left(\max_{\sigma = q\text{-cell of } M_0}  ||W(\mathbf{1}_\sigma)||_{\infty, M_0} \right) || c ||_{1, \mathrm{comb}}.$$  

%Note also
%\begin{align}\label{comb1infty}\frac{1}{\# q\text{-cells of } K} ||\gamma||_{1,\mathrm{comb}} \leq ||\gamma||_{\infty,\mathrm{comb}} \leq || \gamma ||_{1,\mathrm{comb}},\end{align}
%with $\# q\text{-cells of } K\leq C_{K_0}\vol(M)$.

\section{The Whitney 2-chain Laplacian spectral gap controls stable commutator length} \label{whitneylaplacianscl}

Fix a triangulation $K_0$ of a closed hyperbolic $n$-manifold $M_0.$ 
%with $c_q$ cells in dimension $q.$  
Let $M \xrightarrow{p} M_0$ be a finite cover of $M_0$ with triangulation $K = p^{-1}(K_0).$  \bigskip

\begin{proposition} \label{whitneylambda1controlsscl}
Let $f \in C_1(M;K)$ be an integeral 1-chain, some multiple of which bounds.  Let $\mathrm{length}(f)$ denote the Riemannian length of $f.$\footnote{If $f = \sum a_i c_i,$ its \emph{Riemannian length} is defined to equal $\sum |a_i| \mathrm{length}(c_i).$}  Then there exists $B_{M_0}>0$, depending only on $M_0,$ an integer $m,$ and a surface $S_m \in C_2(M;K)$ bounding $nf$ satisfying  
$$\left( \frac{|\chi(S_m)| / m }{\mathrm{length}(f)} \right)^2 \leq   \frac{B_{M_0}\vol(M)^2}{\lambda_{1,\mathrm{Whitney}}^1(M)_{d^\ast}}.$$ 
\end{proposition}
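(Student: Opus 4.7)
The plan is to solve the chain equation $\partial s = f$ in $C_2(M;K;\mathbb{R})$ with small Whitney $L^2$-norm by exploiting the spectral gap $\lambda := \lambda_{1,\mathrm{Whitney}}^1(M)_{d^\ast}$, then round $s$ to an integer solution $S_m$ of $\partial S_m = m f$ whose combinatorial $L^1$-norm is comparable to $m \cdot \|s\|_{1,\mathrm{comb}}$, and finally realize $S_m$ as a simplicial $2$-complex whose Euler characteristic is bounded by the number of triangles it uses.

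\emph{Hodge estimate and length bound.}  The Whitney $L^2$-pairing induces an isomorphism $\phi:C^q\to C_q$ characterized by $\langle\phi(\gamma),\eta\rangle_{\mathrm{eval}}=\langle\gamma,\eta\rangle_W$; under $\phi$ the coboundary $d$ becomes $\partial^\ast$ and $d^\ast$ becomes $\partial$, and the pushed-forward inner product coincides with the dual chain norm $\|\cdot\|_{2,M}$ of \S\ref{chainnorm}.  In particular $\partial^\ast\partial$ on $C_2$ is conjugate to $dd^\ast$ on $C^2$, whose smallest positive eigenvalue on $dC^1$ equals $\lambda$.  Hence for $f\in\partial C_2$, the unique $s\in(\ker\partial)^{\perp}\cap C_2$ with $\partial s=f$ obeys $\|s\|_{2,M}\le\|f\|_{2,M}/\sqrt{\lambda}$.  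To bound $\|f\|_{2,M}$: for any cochain $\gamma$,
$$|\gamma(f)|=\left|\int_f W(\gamma)\right|\le\mathrm{length}(f)\cdot\|\gamma\|_{\infty,M}\le C_{M_0}\,\mathrm{length}(f)\,\|\gamma\|_{2,M}$$
by Proposition~\ref{comparisonswithwhitneysupnorm}, so $\|f\|_{2,M}\le C_{M_0}\,\mathrm{length}(f)$.

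\emph{Conversion to combinatorial $L^1$ and surface realization.}  Because Whitney forms have local support, $\|\cdot\|_{2,M}$ and $\|\cdot\|_{2,\mathrm{comb}}$ on $C^q(M;K)$ agree up to an $M_0$-dependent constant (the local argument extending \eqref{pmcompare} to the cover), and the same equivalence holds dually on chains.  Cauchy--Schwarz applied to $\mathbb{R}^N$, with $N=\#\{2\text{-cells of }K\}\le C_{M_0}\vol(M)$, then yields
$$\|s\|_{1,\mathrm{comb}}\le\sqrt{N}\,\|s\|_{2,\mathrm{comb}}\le C'_{M_0}\sqrt{\vol(M)}\,\|s\|_{2,M}.$$
Since the affine subspace $\{s:\partial s=f\}$ is defined over $\mathbb{Q}$, I approximate $s$ by a rational solution of nearly the same $\|\cdot\|_{1,\mathrm{comb}}$-norm and clear denominators to obtain an integer $S_m\in C_2(M;K;\mathbb{Z})$ with $\partial S_m=mf$ and $\|S_m\|_{1,\mathrm{comb}}/m$ arbitrarily close to $\|s\|_{1,\mathrm{comb}}$.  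Writing $S_m=\sum a_i T_i$ and taking $|a_i|$ oriented copies of each $T_i$, the relation $\partial S_m=mf$ forces consistent matching of face-sides along every interior edge, so the copies glue to a $2$-complex bounding $mf$ with $F=\|S_m\|_{1,\mathrm{comb}}$ faces, whence $|\chi(S_m)|\le 4F$.

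Chaining the four estimates gives $|\chi(S_m)|/m\le C''_{M_0}\sqrt{\vol(M)}\,\mathrm{length}(f)/\sqrt{\lambda}$; squaring (and absorbing $M_0$-dependent constants into $B_{M_0}$, using $\vol(M)\ge\vol(M_0)$ to promote $\vol(M)$ to $\vol(M)^2$) produces the claim.  The main technical obstacle is the Hodge step: the Whitney inner product is not the combinatorial one, so identifying $\partial$ with a Whitney adjoint and transferring the spectral gap from $dd^\ast$ on $dC^1\subset C^2$ to $\partial^\ast\partial$ on $\partial^\ast C_1\subset C_2$ requires carefully tracing through the musical isomorphism $\phi$.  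A minor wrinkle is that the Whitney--combinatorial $L^2$-equivalence on cochains of the cover is not stated outright in \S\ref{whitneycomplex}, but it follows from the same local reasoning used there for the $L^\infty$ and $L^1$ comparisons.
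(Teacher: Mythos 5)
Your proof is correct and follows essentially the same strategy as the paper's: solve $\partial s=f$ with $s\perp\ker\partial$ using the Whitney spectral gap, approximate by a rational chain, clear denominators, bound $|\chi(S_m)|$ by $4$ times the face count, and control $\|f\|_{2,M}$ via $\mathrm{length}(f)$ and Proposition~\ref{comparisonswithwhitneysupnorm}. The only cosmetic divergence is the middle norm comparison: you pass $\|\cdot\|_{1,\mathrm{comb}}\le\sqrt{N}\,\|\cdot\|_{2,\mathrm{comb}}\lesssim\sqrt{\vol(M)}\,\|\cdot\|_{2,M}$, invoking a Whitney-vs-combinatorial $L^2$ equivalence on the cover that the paper never states (though it does follow from the same locality argument), whereas the paper uses the dualized $L^\infty$ comparison $\|\cdot\|_{1,\mathrm{comb}}\le c_{M_0}\|\cdot\|_{1,M}$ from Proposition~\ref{whitneycombinatorialequivalence} together with $\|\cdot\|_{1,M}\le\sqrt{\vol(M)}\|\cdot\|_{2,M}$ on chains; both routes yield the same power of $\vol(M)$, and you correctly observe that the statement's $\vol(M)^2$ is an over-statement of what the argument actually delivers.
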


\begin{proof}
Equip the chain groups $C_q(M;K)$ with the norm $|| \cdot ||_{2,M}$ dual to the norm $||\cdot ||_{2,M}$ on $C^q(M;K)$ induced from the $L^2$-norm on $\Omega^q(M)$ via the Whitney embedding.  

Let $\partial$ denote the boundary map on $C_{\bullet}(M;K).$  Let $g$ be a real $2$-chain satisfying $\partial g = f$ and $g\perp \text{ker}(\p).$ Because the kernel and the image of $\partial$ have rational bases (and so the existence of a real solution to $\partial c = f$ implies the existence of a rational solution), we can find a rational 2-chain $g'$ with $\partial g' = f$ and $|| g' - g ||_{2,M}$ arbitrarily small.  So we may assume for any $\delta>0$ there exists $g'$ so that:
\begin{itemize}
\item[(1$'$)] 
$\partial g' = f$

\item[(2$'$)]  
$|| g' ||_{2,M}^2 \leq \frac{1+\delta}{\lambda_{1,\mathrm{Whitney}}^1(M)_{d^\ast}} || f ||_{2,M}^2$

\item[(3$'$)] 
$g'$ is a rational 2-chain.
\end{itemize}

Condition (2$'$) can be guaranteed because $\lambda_1^1(M)_{\mathrm{Whitney},d^\ast}$ equals the smallest eigenvalue of $\partial^\ast \partial$ acting on $2$-chains perpendicular to $\ker \partial.$  Choose an integer $m$ so that 
\begin{equation} \label{integermultiple}
mg' = \sum_{j=1}^k a_jc_{2,j}, \text{ for } a_q \in \mathbb{Z} \text{ and } \{c_{2,j}\}_j \text{ the 2-cells of } K.
\end{equation}  
The number of faces of $mg'$ equals $\sum_{i = 1}^k |a_i| = ||mg'||_{1,\mathrm{comb}}.$  Counting incident pairs $\{ \text{vertex} \in \text{face}\}$ and $\{\text{edge} \subset \text{face} \},$ we see that
\begin{align*}
3 F &\geq V, \\
3 F &\geq E,
\end{align*}
where $V,E,F$ denote the number of vertices, edges, and faces on the surface bounding $mf.$  Therefore, the absolute value of the euler characteristic $= |(V + F) - E|$ is at most $\max \{V + F, E\} \leq  4 F \leq 4 ||mg'||_{1, \mathrm{comb}} .$  Thus, letting $S_m = mg',$ 
\begin{align} \label{sclupperbound}
|\chi(S_m) |^2  &\leq 16 ||mg'||^2_{1, \mathrm{comb}} \nonumber \\
%&\ll_{M_0} \vol(M)^2 \cdot ||mg'||^2_{\infty, \mathrm{comb}} \hspace{0.5cm} \text{ by } \eqref{comb1infty} \nonumber \\
%&\ll_{M_0} \vol(M)^2 ||W(mg')||^2_{\infty,M} \hspace{0.5cm} \text{ by } \eqref{whitneysupnormcomparison} \nonumber \\
&\leq c_{M_0} \cdot  16 \cdot ||mg'||^2_{1, M} \nonumber \hspace{0.5cm} \text{ by Proposition \ref{whitneycombinatorialequivalence}}\\
&\leq 16 c_{M_0}\cdot \vol(M) \cdot ||mg'||^2_{2,M} \nonumber\\
%&\ll_{M_0} \vol(M)^2 \cdot ||W(mg')||^2_{2,M} \hspace{0.5cm} \text{ by } \eqref{finalnormcomparisonwhitneycomplex} \nonumber \\
&\leq m^2 \cdot 16 c_{M_0} \cdot \vol(M) \cdot \frac{1 + \delta}{\lambda_{1,\mathrm{Whitney}}^1(M)_{d^\ast}} || f||^2_{2,M} \hspace{0.5cm} \text{ by (2$'$)} . 
\end{align}

Also,
\begin{align} \label{whitneynormupperbound}
||f||_{2,M} &=   \sup_{\gamma \in C^1(M;K): ||\gamma||_{2,M} \leq 1} \left|  \int_f W(\gamma) \right| \nonumber \\
&\leq   \sup_{\gamma \in C^1(M;K): ||\gamma||_{2,M} \leq 1} ||\gamma ||_{\infty,M} \,\mathrm{length}(f) \nonumber \\
&\leq  C_{M_0}  \sup_{\gamma \in C^1(M;K): ||\gamma||_{2,M} \leq 1} ||\gamma||_{2,M}\, \mathrm{length}(f) \hspace{0.5cm} \text{ by Proposition \ref{comparisonswithwhitneysupnorm}}\nonumber \\
&=  C_{M_0}  \cdot \mathrm{length}(f).
\end{align}

Together \eqref{sclupperbound} and \eqref{whitneynormupperbound} yield

\begin{equation} \label{finalsclupperbound}
\left( \frac{ \chi(S_m) / m}{\mathrm{length}(f)} \right)^2 \leq B_{M_0}  \frac{\vol(M)}{\lambda_{1,\mathrm{Whitney}}^1(M)_{d^\ast}},
\end{equation}
where $B_{M_0} = C_{M_0} \cdot 16c_{M_0} \cdot (1 + \delta).$ 
\end{proof}

\begin{remark} \label{combinatoriallambda1controlsscl}
The same result is true, replacing the Whitney Laplacian with respect to $K$ and $\lambda_{1,\mathrm{Whitney}}^1(M)_{d^\ast}$ by the combinatorial Laplacian with respect to $K$ and $\lambda_{1, \mathrm{comb}}^1(M)_{d^\ast}.$  The proof actually simplifies, because comparing the combinatorial $L^1$ and $L^2$-norms is easier than comparing the combinatorial $L^1$-norm and Riemannian $L^2$-norm.
\end{remark}

\begin{theorem} \label{whitneylambda1controlsscltriangulationindependent}
Let $\gamma \in \pi_1(M)$ have translation length $\ell(\gamma).$  Suppose that some multiple of $\gamma$ bounds.  Then
$$\left(\frac{\mathrm{scl}(\gamma)}{\ell(\gamma)} \right)^2 \leq W_{M_0} \cdot \frac{\vol(M) \cdot \mathrm{diam}(M)^2}{\lambda_1^1(M)_{\mathrm{Whitney},d^\ast}}$$
for some constant $W_{M_0}$ depending only on $M_0.$
\end{theorem}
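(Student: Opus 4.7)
The plan is to deduce the theorem from Proposition~\ref{whitneylambda1controlsscl} applied to a carefully chosen simplicial approximation of the closed geodesic $\gamma_\ast \subset M$ in the free homotopy class of $\gamma$ (of length $\ell(\gamma)$). First I would fix a subdivision scale $\epsilon_{M_0}$ smaller than $\mathrm{inj}(M_0)$ and small enough that every $\epsilon_{M_0}$-ball in $M$ sits inside an embedded star of $K$. Subdivide $\gamma_\ast$ into $N = O(\ell(\gamma)/\epsilon_{M_0})$ segments $[p_i,p_{i+1}]$ of length at most $\epsilon_{M_0}$, pick for each $p_i$ a nearest vertex $v_i$ of $K$ (at distance $\leq \epsilon_{M_0}/2$, using that vertices of $K_0$ are uniformly dense in $M_0$), and connect consecutive $v_i$'s by shortest edge paths $\sigma_i$ in the 1-skeleton. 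The concatenation $f = \sigma_0\sigma_1\cdots\sigma_{N-1}$ is then a simplicial loop with $\mathrm{length}(f) \leq C_{M_0}\cdot\ell(\gamma)$. For each $i$, the geodesic quadrilateral $p_i p_{i+1} v_{i+1} v_i$ sits in a small embedded ball and bounds a disk $D_i$; gluing the $D_i$'s along the shared edges $[p_i,v_i]$ yields (a quick $V-E+F=0$ count confirms this) an annulus that realizes a free homotopy $\gamma_\ast \simeq f$ of Euler characteristic zero.

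Because $\gamma_\ast$ has a bounding multiple and $f \simeq \gamma_\ast$, the same is true of $f$. Proposition~\ref{whitneylambda1controlsscl} then supplies an integer $m$ and a 2-chain $S_m \in C_2(M;K)$ with $\partial S_m = mf$ such that $|\chi(S_m)|/m$ is bounded by a constant depending only on $M_0$ times $\sqrt{\vol(M)/\lambda_{1,\mathrm{Whitney}}^1(M)_{d^\ast}}\cdot\mathrm{length}(f)$. Gluing $m$ copies of the annular cobordism onto $S_m$ along its $m$ boundary copies of $f$ produces a surface $\widetilde{S}_m$ with $\partial \widetilde{S}_m = m\gamma_\ast$ and $\chi(\widetilde{S}_m) = \chi(S_m)$; connectedness of $\widetilde{S}_m$ is arranged by attaching at most $m-1$ tubes, a correction that is absorbed into a multiplicative $M_0$-dependent constant after one invokes the uniform lower bound $\ell(\gamma) \geq 2\,\mathrm{inj}(M_0)$. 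Thus $\widetilde{S}_m$ is admissible in the scl infimum, so $\mathrm{scl}(\gamma) \lesssim_{M_0} |\chi(S_m)|/m$; combining with $\mathrm{length}(f) \leq C_{M_0}\,\ell(\gamma)$ and with the crude lower bound $1 \leq \mathrm{diam}(M)/\mathrm{diam}(M_0)$ yields the desired inequality (in fact with $\mathrm{diam}(M)$ to the zeroth, rather than second, power; the statement is correspondingly slack).

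The main obstacle is the first step: exhibiting a simplicial 1-chain $f$ that is genuinely \emph{freely homotopic} (not merely homologous) to $\gamma_\ast$, with Riemannian length bounded by an $M_0$-universal multiple of $\ell(\gamma)$. Free homotopy through a cylinder is precisely what forces the cobordism to contribute $\chi = 0$; if one only had a 2-chain homology between $f$ and $\gamma_\ast$, its topology could be arbitrary and would swamp the scl estimate. Making the construction succeed relies on the embedded-star hypothesis on $K_0$ (inherited by the pullback $K$), which guarantees that every short arc $[p_i,p_{i+1}]$ and its replacement $\sigma_i$ fit jointly in a contractible region of $M$, so that the patched-together $D_i$'s really form an annulus rather than a surface of higher genus. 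The uniform closed-geodesic lower bound $\ell(\gamma) \geq 2\,\mathrm{inj}(M_0)$ rules out the degenerate case in which $\gamma_\ast$ is confined to a single cell.
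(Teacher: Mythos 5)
Your proposal is correct and proves the theorem, but by a genuinely different construction than the paper uses. The paper works in the universal cover: it chooses a vertex $p$ of $\widetilde{K}$ closest to the axis of $\gamma$ in $\mathbb{H}^n$, forms the geodesic segment $\alpha_{p,\gamma p}$, and builds a simplicial path $\alpha_{\mathrm{comb}}$ from $p$ to $\gamma p$ by selecting at most $\binom{n+1}{2}$ edges from each top simplex that $\alpha_{p,\gamma p}$ pierces. Projecting $\alpha_{\mathrm{comb}}$ to $M$ gives a loop freely homotopic to $\gamma$ automatically, because $\mathbb{H}^n$ is simply connected and any two paths from $p$ to $\gamma p$ project to homotopic loops. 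The paper then controls $\mathrm{length}(f)$ by bounding the number of pierced simplices via Lemma~\ref{diametercomparison} and using $d(p,\gamma p) \leq 2\,\mathrm{diam}(M) + \ell(\gamma)$; the $\mathrm{diam}(M)^2$ factor in the statement comes precisely from the crude estimate $d(p,\text{axis}) \leq \mathrm{diam}(M)$. You instead approximate the closed geodesic $\gamma_\ast$ directly inside $M$, snapping sample points to nearby vertices and joining them by short edge paths, and you manufacture the free homotopy explicitly as an annular cobordism built from small disks. The homotopy is the harder step to arrange in your setting (in the paper it is free, coming from the universal cover), and your handling of it via the embedded-star hypothesis and the $V-E+F$ count for the annulus is correct.

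The notable payoff of your route is that it is actually sharper: your construction yields $\mathrm{length}(f) \leq C_{M_0}\,\ell(\gamma)$, whereas the paper's gives only $\mathrm{length}(f) \ll_{M_0} \mathrm{diam}(M) + \ell(\gamma)$. The paper's $\mathrm{diam}(M)$-bound on $d(p,\text{axis})$ is in fact wasteful, since the vertices of the pulled-back triangulation are $\epsilon_{M_0}$-dense in $M$, so that distance is really $O_{M_0}(1)$; your argument in effect exploits exactly this. As you observe, the $\mathrm{diam}(M)^2$ factor is then dispensable, and you reinsert it only to match the stated inequality. One remark on a point you flag explicitly and the paper passes over silently: both arguments need $S_m$ to be an admissible (connected) surface in the $\mathrm{scl}$ infimum, and your tube-attachment fix plus the $\ell(\gamma) \geq 2\,\mathrm{inj}(M_0)$ absorption is a reasonable way to handle it; but note that the resulting additive $O_{M_0}(1)$ term can only be folded into the multiplicative constant if $\vol(M)/\lambda_{1,\mathrm{Whitney}}^1(M)_{d^\ast}$ is bounded below by an $M_0$-constant, which holds but deserves a sentence (e.g.\ because $\vol(M)\geq\vol(M_0)$ and the Whitney Laplacian has $M_0$-bounded operator norm).
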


\begin{proof}
Pull back the triangulation $K$ of $M$ to a triangulation $\widetilde{K}$ on $\mathbb{H}^n.$  Let $p$ be a vertex of $\widetilde{K}$ whose distance to the minimum translation set of $\gamma$ is minimal; this choice of $p$ depends on $\gamma.$  The distance from $p$ to the minimum translation set of $\gamma$ is at most $\mathrm{diam}(M).$  

Let $\alpha$ be a geodesic segment in $\mathbb{H}^n$ from $p$ to $\gamma p.$  Suppose $s_0,\ldots, s_k$ are the top degree simplices of $\widetilde{K}$ whose interiors $\alpha$ passes through in the order listed.  We can find a path
$$\alpha_{\mathrm{comb}} = \left( c_1^0  \cdots c_{j_0}^0 \right)  \cdots \left( c_1^k \cdots c_{j_k}^k \right)$$
(dots denote concatenation, and the above expression should be read in ``left to right order")
satisfying
\begin{itemize}
\item
every $c_j^i$ is an edge of $s_i,$ 

\item
no edge is repeated (implying that $j_0,\ldots,j_k \leq {n+1 \choose 2}$),

\item
$c_1^0$ begins at $p$ and $c_{j_k}^k$ ends at $\gamma p.$ 
\end{itemize}

Let $f \in C^1(\mathbb{H}^n;\widetilde{K})$ denote the chain induced by $\alpha_{\mathrm{comb}}$; i.e.
$$f = \left( c_1^0 +  \cdots + c_{j_0}^0 \right)  + \cdots + \left( c_1^k + \cdots + c_{j_k}^k \right),$$
and let $\overline{f}$ denote its projection to $M.$  By Theorem \ref{whitneylambda1controlsscl}, there is an integer $m,$ a surface $S_m$ bounding $m \overline{f},$ and a constant $B_{M_0}$ for which
$$\left(\frac{|\chi(S_m)|}{m} \right)^2 \leq  \mathrm{length}(\overline{f})^2 \cdot \frac{B_{M_0} \vol(M)}{\lambda_1^1(M)_{\mathrm{Whitney},d^\ast}}.$$

The projection $\overline{\alpha}_{\mathrm{comb}}$ of $\alpha$  to $M$ is homotopic to $\gamma$ in $\pi_1(M).$  Therefore, 
\begin{align*}
\left(\frac{\mathrm{scl}(\gamma)}{\ell(\gamma)} \right)^2 &= \left(\frac{\mathrm{scl}(\overline{\alpha}_{\mathrm{comb}})}{\mathrm{length}(\overline{f})} \right)^2 \cdot \left(\frac{\mathrm{length}(\overline{f})}{\ell(\gamma)} \right)^2 \\
&\leq \left(\frac{| \chi(S_m) |/m}{\mathrm{length}(\overline{f})} \right)^2 \cdot \left(\frac{\mathrm{length}(\overline{f})}{\ell(\gamma)} \right)^2 \\
&\leq  \frac{B_{M_0} \vol(M)}{\lambda_1^1(M)_{\mathrm{Whitney},d^\ast}} \cdot  \left(\frac{\mathrm{length}(f)}{\ell(\gamma)} \right)^2,
\end{align*}

where passage to the last line follows from Proposition \ref{whitneylambda1controlsscl}.  
The second bullet point above implies that $\mathrm{length}(f)$ is at most ${n+1 \choose 2} \cdot k \cdot e_0,$ where $k$ is the combinatorial distance from $s_0$ to $s_k$ in the dual graph to the triangulation $\widetilde{K}$ and $e_0$ is the length of the longest edge in $K_0.$  By the argument from Lemma \ref{diametercomparison}, there are constants $a_0, b_0,$ depending only on $M_0,$ for which 
$$k \leq a_0 \cdot d(p,\gamma p) + b_0.$$
Because the distance from $p$ to the minimum translation set of $\gamma$ is at most $\mathrm{diam}(M),$ the latter inequality implies
\begin{align*}
\mathrm{length}(f) &\leq k \cdot {n+1 \choose 2} \cdot e_0 \\
&\leq \left[a_0 \cdot \left( 2 \mathrm{diam}(M) + \ell(\gamma) \right) + b_0\right] \cdot {n+1 \choose 2} \cdot e_0,
\end{align*}
where $e_0$ denotes the maximum edge length in $K_0.$  The result follows.
\end{proof}

\section{Comparing $\lambda_1^1(M)_{d^\ast}$ to $\lambda_{1,\mathrm{Whitney}}^1(M)_{d^\ast}$} \label{lambda1whitneylambda1derham}

\begin{proposition} \label{whitneyderhamcomparison}
Let $M_0$ be a closed hyperbolic manifold with triangulation $K_0.$  Let $M \xrightarrow{\pi} M_0$ be an arbitrary finite cover with pullback triangulation $K = \pi^{-1}(M_0).$  Either  
$$\lambda_{1,\mathrm{Whitney}}^1(M)_{d^\ast} \geq  \frac{1}{4G_{M_0}^2 C_{M_0}^2 \vol(M)},$$ 
or  
$$\lambda_1^1(M)_{d^\ast} \leq 4G_{M_0}^2\vol(M) \cdot \lambda_{1,\mathrm{Whitney}}^1(M)_{d^\ast},$$
where $C_{M_0}$ is defined in Proposition \ref{comparisonswithwhitneysupnorm} and $G_{M_0}$ is defined in the proof body. 
\end{proposition}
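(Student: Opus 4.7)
The plan is to take a Whitney 1-eigencochain $\gamma \in d^\ast_{\mathrm{Whitney}} C^2(M;K)$ attaining $\lambda_{1,W} := \lambda^1_{1,\mathrm{Whitney}}(M)_{d^\ast}$, push it to the de Rham form $W\gamma$, and run a dichotomy based on its Hodge decomposition. Let $R \colon \Omega^q(M) \to C^q(M;K)$ denote the de Rham map given by integration over cells; then $R \circ W = \mathrm{id}$, $R$ is a chain map, and by definition $\|W\sigma\|_{L^2} = \|\sigma\|_{2,M}$. Normalize $\|\gamma\|_{2,M} = 1$, so $\|d\gamma\|_{2,M}^2 = \lambda_{1,W}$. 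Writing $W\gamma = d\alpha + d^\ast\beta + h$ in the $L^2$ Hodge decomposition on $\Omega^1(M)$ and using that $W$ commutes with $d$,
\[
\|dd^\ast\beta\|_{L^2}^2 = \|dW\gamma\|_{L^2}^2 = \|W(d\gamma)\|_{L^2}^2 = \|d\gamma\|_{2,M}^2 = \lambda_{1,W}.
\]

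If $\|d^\ast\beta\|_{L^2}^2 \geq \tfrac{1}{4G_{M_0}^2\vol(M)}$, then $d^\ast\beta$ is a nonzero coexact test form for $\lambda_1^1(M)_{d^\ast}$, yielding
\[
\lambda_1^1(M)_{d^\ast} \leq \tfrac{\|dd^\ast\beta\|^2}{\|d^\ast\beta\|^2} \leq 4G_{M_0}^2\vol(M)\,\lambda_{1,W},
\]
which is the second alternative.

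In the complementary case, when $\|d^\ast\beta\|_{L^2}$ is small, I argue by contradiction, assuming additionally that $\lambda_{1,W} < \tfrac{1}{4G_{M_0}^2 C_{M_0}^2\vol(M)}$. With $\omega_c := d\alpha + h$ closed, $R(\omega_c)$ is Whitney-closed by Stokes. Whitney-coexactness of $\gamma$ gives $\langle\gamma,R(\omega_c)\rangle_W = 0$, hence Pythagoras in the Whitney inner product yields
\[
\|W\gamma - WR(\omega_c)\|_{L^2}^2 = \|\gamma\|_W^2 + \|R(\omega_c)\|_W^2 \geq 1.
\]
Because $W\gamma$ is itself a Whitney form, $WRW\gamma = W\gamma$, and therefore $\omega_c - WR(\omega_c) = WR(d^\ast\beta) - d^\ast\beta$; the triangle inequality then yields
\[
1 \leq 2\|d^\ast\beta\|_{L^2} + \|WR(d^\ast\beta)\|_{L^2}.
\]

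The main obstacle is to obtain the matching upper bound on $\|WR(d^\ast\beta)\|_{L^2}$ that produces the contradiction. The plan is a chain of estimates: Proposition~\ref{whitneycombinatorialequivalence} gives $\|WR(d^\ast\beta)\|_{L^2} \leq c_{M_0}\sqrt{\vol(M)}\,\|R(d^\ast\beta)\|_{\infty,\mathrm{comb}}$; the cellwise bound $|\int_c d^\ast\beta| \leq \mathrm{length}(c)\cdot\|d^\ast\beta\|_{L^\infty(M)}$, with edge lengths controlled by $K_0$; and local elliptic regularity on geodesic balls of $M_0$-bounded radius applied to the first-order system $d(d^\ast\beta) = W(d\gamma)$, $d^\ast(d^\ast\beta) = 0$, which bounds $\|d^\ast\beta\|_{L^\infty(M)}$ by an $M_0$-dependent multiple of $\|d^\ast\beta\|_{L^2(M)} + \|W(d\gamma)\|_{L^\infty(M)}$. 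Proposition~\ref{comparisonswithwhitneysupnorm} applied to the Whitney 2-cochain $d\gamma$ then bounds $\|W(d\gamma)\|_{L^\infty} \leq C_{M_0}\sqrt{\lambda_{1,W}}$. The constant $G_{M_0}$ is defined to absorb the $M_0$-only factors arising in this chain, so that under the two hypotheses both $2\|d^\ast\beta\|_{L^2}$ and $\|WR(d^\ast\beta)\|_{L^2}$ fall strictly below $\tfrac{1}{2}$, contradicting the lower bound. The most delicate step I expect is the elliptic regularity estimate with $M$-uniform constants; this works because every point of $M$ lies in a geodesic ball that projects isometrically onto a ball of fixed radius in $M_0$, making the local Schauder/Moser constants depend only on the geometry of $M_0$.
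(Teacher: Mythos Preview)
Your argument is correct and follows essentially the same route as the paper: Hodge-decompose $W\gamma$, observe that only the coexact piece $d^\ast\beta$ (the paper's $\epsilon$) can witness $\|W\gamma\|$ because the closed piece is Whitney-orthogonal to $\gamma$, then bound $\|d^\ast\beta\|_{L^\infty}$ by Garding/elliptic regularity for $d+d^\ast$ and control $\|W(d\gamma)\|_{L^\infty}$ via Proposition~\ref{comparisonswithwhitneysupnorm}. The paper packages the orthogonality step through chain--cochain duality, writing $\|W(f)\|_{2,M}=\sup_{\|\partial\sigma\|=1}\bigl|\int_{\partial\sigma}\epsilon\bigr|$, whereas you use the de~Rham map $R$ and Pythagoras; these are two phrasings of the same identity and lead to the same constant $G_{M_0}=c_{M_0}\,e_0\,S_{B_0}$.

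One cosmetic point: your triangle-inequality step is unnecessary. Since $RW=\mathrm{id}$ gives $\gamma-R(\omega_c)=R(d^\ast\beta)$ exactly, you already have $\|WR(d^\ast\beta)\|_{L^2}\ge 1$ directly from Pythagoras, and the chain of estimates then yields $1\le G_{M_0}\sqrt{\vol(M)}\bigl(\|d^\ast\beta\|_{L^2}+C_{M_0}\sqrt{\lambda_{1,W}}\bigr)$, which under the two contradiction hypotheses is strictly less than $1$. The extra $2\|d^\ast\beta\|_{L^2}$ term and the claim that it is below $\tfrac12$ are not needed (and that claim would in any case require $\vol(M)$ bounded below, which is fine here since $\vol(M)\ge\vol(M_0)$, but is better avoided).
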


\begin{remark}
If the first alternative in Proposition \ref{whitneyderhamcomparison} holds, Theorem \ref{whitneylambda1controlsscltriangulationindependent} implies that 
$$\frac{\mathrm{scl}(\gamma)}{\ell(\gamma)} \leq \sqrt{D_{M_0}} \cdot 2 G_{M_0 }C_{M_0} \cdot \vol(M) \cdot \mathrm{diam}(M).$$ 
In particular, if $b_1(M) = 0$ or $n = 3$ and $b_1(M) = 1,$ then Corollary \ref{lowerboundeigenvalue} and Proposition \ref{regulatorindependentbounds} respectively imply that 
%(\textcolor{red}{elaborate euler to area})
$$\frac{1}{\lambda_1^1(M)_{d^\ast}} \leq \begin{cases}  E_{M_0} \cdot \vol(M)^2 \cdot \mathrm{diam}(M)^4 &\text{ if } b_1(M) = 0 \\ E_{M_0,\delta} \cdot \vol(M)^{3 + 2\delta} \cdot \mathrm{diam}(M)^3 &\text{ if } n = 3 \text{ and } b_1(M) = 1 \end{cases}.$$
\end{remark}

\begin{proof}
Let $f \in d_{\mathrm{Whitney}}^{\ast} C^2(M;K)$ satisfy
$$\lambda_{1, \mathrm{Whitney}}^1(M)_{d^*} = \frac{|| d W(f) ||^2_{2,M}}{||W(f)||^2_{2,M}}.$$
There is an orthogonal decomposition in $\Omega^1(M)$
$$W(f) = z + \epsilon,$$
with $\epsilon$   coclosed and $z$   closed.  We will show that $\epsilon$ and $W(f)$ have comparable $L^2$-norms.  Equip the chain group $C_q(M;K)$ with the norm $||\cdot ||_{2,M}$ dual to the $L^2$-Whitney norm on $C^q(M;K)$; see \S \ref{chainnorm} for further discussion.

%First we need to consider norms on the space of chains. The chains may be endowed with combinatorial $L^1,L^2,$ and $L^\infty$ norms, defined by $\|\sum_ia_ic_i\|_{p,\text{comb}}= (\sum_i|a_i|^p)^{\frac{1}{p}}$, exactly as the cochains.  The map $c_i\to 1_{c_i}$  induces isometries from $C_q(M,K)$ to $C^q(M,K)$, equipped with the combinatorial norms.  We may also endow the $C_q(M,K)$ with norms induced from the Whitney embedding as follows. For $c\in C_q(M,K)$
%\begin{equation}\label{chainnorm}\|c\|_{p,M} := \sup_{z\in C^q(M,K)\setminus \{0\}}\frac{z(c)}{\|z\|_{p',M}},
%\end{equation}
%where $\frac{1}{p}+\frac{1}{p'} = 1.$
%If we replace $\|z\|_{p',M}$ with $\|z\|_{p',\text{comb}}$ in \eqref{chainnorm}, then this returns the combinatorial norm. The map $c_i\to 1_{c_i}$ does not induce isometries of the spaces $C_q(M,K)$ and $C^q(M,K)$ equipped with the Whitney norms. 

Because $f\in \mathrm{Im}(d^\ast_{\mathrm{Whitney}}) =  \ker(d)^{\perp_{\mathrm{Whitney}}}=(\text{annihilator of } \mathrm{Im}(\p))^{\perp_{\mathrm{Whitney}}}$, we have 
\begin{align} \label{L2normupperbound}
||W(f)||_{2,M} = \sup_{|| \partial \sigma ||_{2,M} = 1} \left| \int_{\partial \sigma} W(f) \right| 
 = \sup_{||\partial \sigma||_{2,M} = 1} \left| \int_{\partial \sigma} \epsilon \right|.
\end{align}
  The inequality
$$||\cdot||_{2, M} \leq \vol(M)^{1/2} ||\cdot ||_{\infty,M} %\ll_{M_0} \vol(M)^{1/2} || \cdot ||_{1, \mathrm{comb}} 
\hspace{0.5cm} \text{on Whitney cochains}$$
implies the dual inequality  
\begin{equation} \label{whitneychainnormcomparison}
\vol(M)^{1/2} || \cdot ||_{2,M}\geq ||\cdot||_{1, M}% \gg_{M_0}   ||\cdot||_{\infty,\mathrm{comb}}
 \hspace{0.5cm} \text{on Whitney chains}.
\end{equation}

The inequality \eqref{boundingsupnormbylinftynorm} 
$$c_{M_0}^{-1}||\cdot||_{\infty, M} \leq   ||\cdot ||_{\infty,\text{comb}} \hspace{0.5cm} \text{on Whitney cochains} 
$$
implies the dual inequality  
\begin{equation} \label{whitneychainnormcomparison2}
\  || \cdot ||_{1,\text{comb}}\leq c_{M_0}||\cdot||_{1, M}
 \hspace{0.5cm} \text{on Whitney chains}.
\end{equation}

% 
%where the chains are endowed with the norms dual to norms on cochains. In particular, for $1\leq p\leq\infty,$ we   define 
%\begin{equation}\|c\|_{p,M}:= \sup_{z\not = 0}\frac{z(c)}{\|Wz\|_{p'}},\,\,\text{ where }\frac{1}{p}+\frac{1}{p'} = 1.
%\end{equation}
 Suppose $\partial \sigma = \sum a_i c_i$ for cells $c_i$ of the triangulation $K.$  Let $e_0$ denote the maximum length among all 1-cells of $K_.0$ Combining \eqref{L2normupperbound}, \eqref{whitneychainnormcomparison},  and \eqref{whitneychainnormcomparison2} gives
\begin{align} \label{whitneyL2upperbound}
||W(f)||_{2,M} &\leq ||\epsilon||_{\infty,M} \cdot e_0 \cdot  \sup_{||\partial \sigma||_{2,M} = 1} \sum_i |a_i| \nonumber \\
&=||\epsilon||_{\infty,M} \cdot e_0 \cdot \sup_{ \partial \sigma  \not = 0}  \frac{\|\p\sigma\|_{1,\text{comb}}}{ \|\p\sigma\|_{2,M}}\nonumber \\
&\leq c_{M_0} \cdot ||\epsilon||_{\infty,M} \cdot e_0 \cdot  \sup_{ \partial \sigma  \not = 0}  \frac{\|\p\sigma\|_{1,M}}{ \|\p\sigma\|_{2,M}}\nonumber \\
&\leq c_{M_0} \cdot ||\epsilon||_{\infty,M} \cdot e_0 \cdot \vol(M)^{1/2}.
\end{align}

Furthermore, let $|\epsilon|$ achieve its supremum at $p\in M$. %Let $\eta\in C_0^\infty(B_{\frac{1}{2} \mathrm{inj}(M_0)}(p))$ which is identically $1$ on $B_{\frac{1}{4} \mathrm{inj}(M_0)}(p))$ with $|d\eta|\leq \frac{8}{ \mathrm{inj}(M_0)}$.
Then we have for some constant $S_{B_0}$ determined by Garding's inequality for the elliptic operator $d+d^*$ on $B_0 = B_{\frac{1}{2} \mathrm{inj}(M_0)}(p)$ and Sobolev constants for $B_0 \subset \mathbb{H}^n,$
\begin{align} \label{whitneysobolev}
||\epsilon||_{\infty,M} &  \leq S_{B_0} \left( ||  \epsilon||_{2,M} + ||(d + d^\ast)  \epsilon ||_{\infty,M} \right)  \nonumber \\
&= S_{B_0} \left( ||\epsilon||_{2,M} + ||d W(f) ||_{\infty,M} \right) \nonumber \\
&\leq S_{B_0} \left( ||\epsilon||_{2,M} + C_{M_0} ||d W(f) ||_{2,M} \right) \hspace{0.5cm} \text{by Proposition \ref{comparisonswithwhitneysupnorm}} \nonumber \\
&=S_{B_0} \left( ||\epsilon||_{2,M} + C_{M_0} \sqrt{\lambda_{1, \mathrm{Whitney}}^1(M)_{d^\ast}} \cdot ||W(f)||_{2,M} \right).
\end{align}
Inserting  inequality \eqref{whitneysobolev} into \eqref{whitneyL2upperbound} yields
\begin{align} \label{later}
\left(1-G_{M_0} \cdot C_{M_0} \cdot \vol(M)^{1/2} \sqrt{\lambda_{1, \mathrm{Whitney}}^1(M)}  \right) ||W(f)||_{2,M} &\leq G_{M_0} \cdot \vol(M)^{1/2}||\epsilon||_{2,M},
\end{align}

where $G_{M_0} := c_{M_0} \cdot e_0 \cdot S_{B_0}.$  If $\lambda_{1,\mathrm{Whitney}}^1(M)_{d^\ast} \leq  \frac{1}{4 G_{M_0}^2 C_{M_0}^2\vol(M)},$ then  
\begin{equation*}
||W(f)||_{2,M}^2 \leq 4G_{M_0}^2\vol(M)  \cdot ||\epsilon||_{2,M}^2.
\end{equation*}
Therefore,
\begin{align*}
\lambda_1^1(M)_{d^\ast} &\leq \frac{||d\epsilon||_{2,M}^2}{||\epsilon||_{2,M}^2} \\
&= \frac{||dW(f)||_{2,M}^2}{||\epsilon||_{2,M}^2} \\
&\leq  4G_{M_0}^2\vol(M) \cdot \frac{||dW(f)||_{2,M}^2}{||W(f)||_{2,M}^2} \\
&= 4G_{M_0}^2\vol(M) \cdot \lambda_{1,\mathrm{Whitney}}^1(M)_{d^\ast}.
\end{align*}
\end{proof}

\section{Applications} \label{applications}
\subsection{Naive lower bounds on $\lambda_1^1(M)$}
\begin{proposition} \label{exponentialspectralgap}
Let $M_0$ be a closed hyperbolic $n$-manifold.  Let $M \rightarrow M_0$ be an arbitrary finite cover with $b_1(M) = 0.$  Then
$$\frac{1}{\lambda_1^1(M)} \leq \exp( H_{M_0} \vol(M))$$
for some constant $H_{M_0}$ depending only on $M.$
\end{proposition}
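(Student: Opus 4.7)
My plan is to combine Theorem~\ref{introupperbound}, Theorem~\ref{whitneylambda1controlsscltriangulationindependent}, and a naive finite-dimensional lower bound on the Whitney Laplacian spectral gap. Since $\lambda_1^1(M) = \min\{\lambda_1^1(M)_d, \lambda_1^1(M)_{d^\ast}\}$ and $1/\lambda_1^1(M)_d$ is only polynomially large in $\vol(M)$ by the forthcoming result of \S\ref{alternatelowerboundlambda10}, it will suffice to bound $1/\lambda_1^1(M)_{d^\ast}$ exponentially in $\vol(M)$.

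First I set up the geometric quantities. Because $\mathrm{inj}(M)\geq \mathrm{inj}(M_0)$ for any cover, a ball-packing argument forces $\mathrm{diam}(M)\leq C_{M_0}\vol(M)$. Working with a tree-type fundamental domain $F_T$ built from a Dirichlet domain of $M_0$ and a fat spanning tree of the Schreier graph $\mathcal{G}(M,M_0)$, Lemma~\ref{treetypeareabound} and Corollary~\ref{diameterboundtreetypefundamentaldomain} yield
\begin{align*}
V := \vol(\partial F_T) \leq V_{M_0}\vol(M), \qquad D \leq D_{M_0}\vol(M)
\end{align*}
for the geometric inputs to Theorem~\ref{introupperbound}.

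Second, I establish the naive lower bound $\lambda_{1,\mathrm{Whitney}}^1(M)_{d^\ast}\geq \exp(-H'\vol(M))$. The Whitney Laplacian restricted to $d^\ast_{\mathrm{Whitney}} C^2(M;K)$ is a symmetric positive-definite matrix on a space of dimension $N\leq N_{M_0}\vol(M)$. Because $B_2(\sigma)$ is embedded for every top-dimensional simplex $\sigma$, in a Whitney-orthonormal basis the matrix is sparse with bandwidth $O_{M_0}(1)$ and entries bounded by a constant $K_{M_0}$, so the operator norm is bounded above by $K_{M_0}$ independently of $\vol(M)$. The elementary estimate $\lambda^+_{\min}\geq \det_+(L)/\|L\|^{N-1}$, combined with a Cheeger--M\"uller / matrix-tree-type bound $\det_+(L)\gtrsim |H_1(M;\mathbb{Z})_{\mathrm{tors}}|^{-2}\geq \exp(-O_{M_0}(\vol(M)))$ (the last estimate coming from Smith normal form applied to the sparse integer boundary matrices of a bounded-geometry triangulation), yields the claim.

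Finally, since $b_1(M)=0$ every $\gamma\in\pi_1(M)$ has a bounding power, so Theorem~\ref{whitneylambda1controlsscltriangulationindependent} together with $\mathrm{sArea}(\gamma)\leq 4\pi\,\mathrm{scl}(\gamma)$ gives
\begin{align*}
\sup_{\gamma\in\pi_1(M)}\frac{\mathrm{sArea}(\gamma)}{\ell(\gamma)} \leq 4\pi\sqrt{\frac{W_{M_0}\vol(M)\mathrm{diam}(M)^2}{\lambda_{1,\mathrm{Whitney}}^1(M)_{d^\ast}}} \leq \exp(H''\vol(M)).
\end{align*}
Feeding this bound, the estimates on $V$ and $D$ from the first step, and the uniform boundedness of $C(\lambda)$ as $\lambda\to 0$ (ensured by $\mathrm{inj}(M)\geq \mathrm{inj}(M_0)$) into Theorem~\ref{introupperbound} gives $1/\sqrt{\lambda_1^1(M)_{d^\ast}}\leq \exp(H_{M_0}\vol(M)/2)$, and squaring completes the proof. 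The main obstacle will be the second step: pure matrix-theoretic estimates alone would only yield $\exp(-O(\vol\log\vol))$, so it is essential to exploit the sparseness of the simplicial differential together with an effective exponential-in-$\vol(M)$ bound on $|H_1(M;\mathbb{Z})_{\mathrm{tors}}|$ in order to achieve the linear exponential rate $\exp(-H'\vol(M))$.
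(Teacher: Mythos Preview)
Your overall architecture matches the paper's: control $\lambda_1^1(M)_d$ via \S\ref{alternatelowerboundlambda10}, obtain an exponential lower bound on a simplicial $1$-form spectral gap, convert this into an $\mathrm{scl}/\ell$ bound via Theorem~\ref{whitneylambda1controlsscltriangulationindependent}, and feed the result into Corollary~\ref{lowerboundeigenvalue}. The difference, and the place where your argument has a genuine gap, is the second step.

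You work with the \emph{Whitney} Laplacian and try to bound $\det_+(L_{\mathrm{Whitney}})$ from below via a ``Cheeger--M\"uller / matrix-tree-type'' inequality $\det_+(L)\gtrsim |H_1(M;\mathbb{Z})_{\mathrm{tors}}|^{-2}$. This step is not justified: the Whitney inner product is not integral, so $L_{\mathrm{Whitney}}$ is not an integer matrix, and there is no direct formula expressing $\det_+(L_{\mathrm{Whitney}})$ in terms of $|H_1|_{\mathrm{tors}}$ alone. Even for the combinatorial Laplacian the relation between $\det_+$ and torsion involves an alternating product over all degrees together with regulator (covolume) factors, not simply $|H_1|_{\mathrm{tors}}^{-2}$. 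Making your route rigorous would require comparing Whitney and combinatorial Gram determinants and then carefully unwinding the full Reidemeister torsion formula---considerably more than what you have written, and you yourself flag this as the ``main obstacle.''

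The paper sidesteps all of this with a cleaner device. It works with the \emph{combinatorial} Laplacian $A=\partial^{\ast}\partial$ on $C_2(K)$, which is an integer matrix because the simplicial boundary maps are. Its characteristic polynomial therefore has integer coefficients; if $a_k$ is the last nonzero coefficient then $|a_k|\geq 1$, and
\[
\frac{1}{\lambda_{1,\mathrm{comb}}^1(M)_{d^\ast}}\ \leq\ \sum_{\lambda\neq 0}\frac{1}{\lambda}\ =\ \frac{|a_{k+1}|}{|a_k|}\ \leq\ |a_{k+1}|.
\]
Since $a_{k+1}$ is a sum of at most $2^N$ principal minors of a sparse integer matrix, Hadamard's inequality gives $|a_{k+1}|\leq \exp(O_{M_0}(N))=\exp(O_{M_0}(\vol(M)))$. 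One then invokes Remark~\ref{combinatoriallambda1controlsscl} (the combinatorial analogue of Theorem~\ref{whitneylambda1controlsscltriangulationindependent}) to pass to the $\mathrm{scl}/\ell$ bound, and finishes exactly as you do. The moral: switch from the Whitney to the combinatorial Laplacian so that integrality of the characteristic polynomial does the work that your unproved $\det_+$ estimate was meant to do.
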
 

\begin{remark}
By the Cheeger-M\"{u}ller theorem, under the assumption $b_1(M) = 0,$  
$$\limsup_{M} \frac{\log \frac{1}{\lambda_1^1(M)}}{\vol(M)} \leq \frac{1}{6\pi},$$
as $M$ varies through any sequence of closed hyperbolic 3-manifold Benjamini-Schramm converging to $\mathbb{H}^3.$  However, we are unaware of any upper bound for $\frac{1}{\lambda_1^1(M)}$ for higher dimensional hyperbolic manifolds in the literature. 
\end{remark}

\begin{proof}
By Lemma \ref{lemmaalternatelowerboundlambda10}, 
$$\frac{1}{\lambda_1^1(M)_d} \leq C \cdot \mathrm{diam}(M)^2 \cdot \vol(M)$$
for some constant $C$ depending only on a lower bound for the injectivity radius of $M.$  So, we focus our attention on $\lambda_1^1(M)_{d^\ast}.$ 

Let $K_0$ be a triangulation of $M_0.$  Let $K$ be the pullback triangulation of $M.$  Consider the operator 
$$A := \partial_1^\ast \partial_2:C_2(K) \rightarrow C_2(K).$$
$A$ is a sparse-integer matrix, i.e. every column has a bounded number of entries (upper bound depending only on $M_0$).  By Hadamard's inequality, every $k \times k$-minor has determinant of absolute value at most $\exp(O_{M_0}(k)).$  Let $N = \dim C_2(K) \approx_{M_0} \vol(M).$  If the characteristic polynomial of $A$ equals $x^N + a_{N-1} x^{N-1} + \cdots + a_{k+1} x^{k+1} + a_k x^k$ (where $a_k$ is the last non-zero coefficient), then 
\begin{align*}
\sum_{\lambda = \text{non-zero e.value of } A} \frac{1}{\lambda} &= \frac{|a_{k+1}|}{|a_k|} \leq |a_{k+1}|
\end{align*}
because $|a_k|$ is an integer $\geq 1.$

But $a_{k+1}$ is the sum of the $\binom{N}{k+1} \leq 2^N$ principal $(k + 1) \times (k+1)$ principal minors of $A,$ all of which have absolute value at most 
$\exp(O_{M_0}(k))$ by our earlier remark.  Therefore,
$$\frac{1}{\lambda_{1, \mathrm{comb}}^1(M)_{d^\ast}} \leq \sum_{\lambda = \text{non-zero e.value of } A} \frac{1}{\lambda} \leq \exp(O_{M_0} \vol(M)).$$
By (the proof of) Theorems \ref{whitneylambda1controlsscl}, \ref{whitneylambda1controlsscltriangulationindependent}, and Remark \ref{combinatoriallambda1controlsscl}, there is an upper bound
$$\frac{\mathrm{scl}(\gamma)}{\ell(\gamma)} \ll_{M_0} \frac{1}{\lambda_{1, \mathrm{comb}}^1(M)_{d^\ast}} = \exp(O_{M_0} \vol(M)).$$

In particular, by Corollary \ref{lowerboundeigenvalue} and the diameter bounds from Propositions \ref{diametercomparison} and \ref{upperbounddiameterdirichletdomain}, there is an upper bound 
\begin{equation*}
\frac{1}{\lambda_1^1(M)_{d^\ast}} = \exp(O_{M_0} \vol(M)).
\end{equation*}
\end{proof}

\subsection{Improved lower bounds on $\lambda_1^1(M^n)$ for hyperbolic $n$-manifolds, $n > 3$} \label{1formshighdimension}
\begin{proposition} \label{smallsurfaceshighdimension}
Let $M_0$ be a closed hyperbolic $n$-manifold, $n > 3.$  Fix a constant $C > 0.$  Suppose $M \rightarrow M_0$ is an arbitrary finite cover satisfying $\lambda_1^1(M) \gg \vol(M)^{-C}.$ 
%and $b_1(M) = 0.$  
Suppose some multiple of $\gamma \in \pi_1(M)$ bounds.  Then 
$$\frac{\mathrm{scl}(\gamma)}{\ell(\gamma)} \ll_{M_0} \vol(M)^{1 + \frac{C}{2}} \cdot \mathrm{diam}(M).$$
\end{proposition}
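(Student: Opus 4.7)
The plan is to assemble the already-stated Theorems \ref{introwhitneyderhamcomparison} and \ref{introlowerbound}. The idea is that the assumed spectral gap on $\lambda_1^1(M)$ forces a (slightly weaker) spectral gap on the Whitney Laplacian $\lambda_{1,\mathrm{Whitney}}^1(M)_{d^\ast}$, which by the combinatorial lower bound in Theorem \ref{introlowerbound} controls stable commutator length.

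First, since $\lambda_1^1(M) = \min\{\lambda_1^1(M)_d, \lambda_1^1(M)_{d^\ast}\}$, the hypothesis yields $\lambda_1^1(M)_{d^\ast} \gg \vol(M)^{-C}$. Next, I apply the dichotomy of Theorem \ref{introwhitneyderhamcomparison}. In its first alternative, $\lambda_{1,\mathrm{Whitney}}^1(M)_{d^\ast} \geq (4G_{M_0}^2 C_{M_0}^2)^{-1}\vol(M)^{-1}$. In its second alternative, rearranging the inequality $\lambda_1^1(M)_{d^\ast} \leq 4G_{M_0}^2 \vol(M) \lambda_{1,\mathrm{Whitney}}^1(M)_{d^\ast}$ and using the hypothesis gives $\lambda_{1,\mathrm{Whitney}}^1(M)_{d^\ast} \gg_{M_0} \vol(M)^{-(C+1)}$. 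Since $C > 0$, in either alternative
$$\lambda_{1,\mathrm{Whitney}}^1(M)_{d^\ast} \gg_{M_0} \vol(M)^{-(C+1)}.$$

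Finally, I substitute this lower bound into Theorem \ref{introlowerbound}:
$$\left(\frac{\mathrm{scl}(\gamma)}{\ell(\gamma)}\right)^2 \leq W_{M_0} \cdot \frac{\vol(M) \cdot \mathrm{diam}(M)^2}{\lambda_{1,\mathrm{Whitney}}^1(M)_{d^\ast}} \ll_{M_0} \vol(M)^{C+2} \cdot \mathrm{diam}(M)^2,$$
and take square roots to recover the claimed bound $\mathrm{scl}(\gamma)/\ell(\gamma) \ll_{M_0} \vol(M)^{1+C/2} \cdot \mathrm{diam}(M)$. Since the argument is a routine composition of previous results, there is no real obstacle; the only bookkeeping subtleties are handling both branches of the Whitney/de Rham dichotomy and confirming that the hypothesis on $\lambda_1^1(M)$ applies to its coexact part $\lambda_1^1(M)_{d^\ast}$. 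The restriction $n > 3$ does not enter the proof itself; as explained in \S \ref{1formshighdimension}, it is simply the regime where the hypothesis $\lambda_1^1(M) \gg \vol(M)^{-C}$ is realistic, because $\lambda_1^1(\mathbb{H}^n) > 0$ for $n > 3$ whereas $\lambda_1^1(\mathbb{H}^3) = 0$.
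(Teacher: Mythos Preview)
Your proof is correct and follows essentially the same approach as the paper, which simply states that the result ``follows immediately from Theorem \ref{whitneylambda1controlsscltriangulationindependent} and Proposition \ref{whitneyderhamcomparison}'' (these are the body-text versions of Theorems \ref{introlowerbound} and \ref{introwhitneyderhamcomparison}). Your write-up in fact supplies the details the paper omits, correctly handling both branches of the Whitney/de Rham dichotomy and the passage from $\lambda_1^1(M)$ to $\lambda_1^1(M)_{d^\ast}$.
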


\begin{proof}
This follows immediately from Theorem \ref{whitneylambda1controlsscltriangulationindependent} and  Proposition \ref{whitneyderhamcomparison}.  
\end{proof}

\begin{remark} \label{1formspectrahyperbolicmanifolds}
The bottom of the $1$-form spectrum $\lambda_1^1(\mathbb{H}^n)$ for the Laplacian $\Delta_1$ acting on smooth compactly supported 1-forms on $\mathbb{H}^n$ equals $\left(\frac{n-3}{2} \right)^2$ for $n \geq 3$ \cite[Theorem 1]{Donnelly}.

In particular, 1-form eigenvalues less than $\lambda_1^1(\mathbb{H}^n)$ are \emph{exceptional}.  There are natural families of closed hyperbolic $n$-manifolds $M, n > 3,$ such as arithmetic congruence hyperbolic $n$-manifolds, for which for which $\lambda_1^1(M)$ is uniformly bounded below \cite{BC}.  For such families, we may set $C = 0$ in Proposition \ref{smallsurfaceshighdimension}.  More generally, it seems plausible to us that if $M_0$ is a closed hyperbolic $n$-manifold, $n > 3,$ and $M \rightarrow M_0$ is an aribtrary finite cover, then $\lambda_1^1(M) \gg_{M_0} \vol(M)^{-C},$ for some constant $C.$ 

%\textbf{Question}: Can we prove this?!  If not in general, can we prove it for ``fat $M$", e.g. if $M_0$ is congruence and $M \rightarrow M_0$ is a cover of degree roughly $\vol(M_0)$? Note: an analogous result holds for the 0-form spectrum.  See Lemma \ref{lemmaalternatelowerboundlambda10}.
\end{remark}

\subsection{Lower bounds on $\lambda_1^1(M^3)$ using retractions from hyperbolic $n$-manifolds, $n > 3$}
\begin{proposition} \label{stealingboundsfromhighdimension}
Let $N_0$ be a closed hyperbolic $n$-manifold, $n > 3.$  Let $M_0 \subset N_0$ be a totally geodesic submanifold.  Suppose $N \xrightarrow{\pi} N_0$ is an arbitrary finite cover. 
%satisfying $b_1(N) = 0.$  
Let $M =$ (a connected component of) $\pi^{-1}(M_0).$  Suppose that there is a covering $p: N' \rightarrow N$ of degree $d$ for which
\begin{itemize}
\item
the submanifold $M$ lifts to $N'$

\item
$N'$ retracts onto $M.$
\end{itemize}
Suppose some integer multiple of $\gamma \in \pi_1(M)$ bounds.  Then
$$\frac{\mathrm{scl}(\gamma)}{\ell(\gamma)} \ll_{N_0} d^2 \cdot \vol(N) \cdot \mathrm{diam}(N) \cdot \sqrt{\frac{1}{\lambda_1^1(N')_{d^\ast}}}.$$
\end{proposition}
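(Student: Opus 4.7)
The plan is to transfer the scl/length data of $\gamma \in \pi_1(M)$ up to $N'$, apply Theorem \ref{whitneylambda1controlsscltriangulationindependent} to $N'$, convert from the Whitney Laplacian to the de Rham Laplacian via Proposition \ref{whitneyderhamcomparison}, and then translate the volume and diameter of $N'$ back to those of $N$ using the covering degree $d.$

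First, I would verify that $\mathrm{scl}_M(\gamma) = \mathrm{scl}_{N'}(\gamma)$ and $\ell_M(\gamma) = \ell_{N'}(\gamma).$ Since $\pi$ and $p$ are local isometries and $M$ lifts isometrically, the embedding $M \hookrightarrow N'$ is totally geodesic, giving the translation-length equality. For stable commutator length, the inclusion $M \hookrightarrow N'$ yields $\mathrm{scl}_{N'}(\gamma) \leq \mathrm{scl}_M(\gamma)$ (any surface bounding $m\gamma$ in $M$ is also a surface bounding $m\gamma$ in $N'$), and post-composing any surface $\Sigma \to N'$ bounding $m\gamma$ with the retraction $r: N' \to M$ produces a surface $\Sigma \to M$ with the same abstract domain $\Sigma,$ hence the same Euler characteristic and boundary; this gives the reverse inequality. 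In particular, the hypothesis that some multiple of $\gamma$ bounds in $M$ propagates to $N'.$

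Next, I would apply Theorem \ref{whitneylambda1controlsscltriangulationindependent} to $N'$ equipped with the pullback of a fixed triangulation of $N_0,$ obtaining
\begin{equation*}
\left(\frac{\mathrm{scl}(\gamma)}{\ell(\gamma)}\right)^2 \leq W_{N_0} \cdot \frac{\vol(N') \cdot \mathrm{diam}(N')^2}{\lambda_{1, \mathrm{Whitney}}^1(N')_{d^\ast}}.
\end{equation*}
Then I would apply Proposition \ref{whitneyderhamcomparison} to $N'.$ In the second alternative, $\frac{1}{\lambda_{1, \mathrm{Whitney}}^1(N')_{d^\ast}} \leq \frac{4 G_{N_0}^2 \vol(N')}{\lambda_1^1(N')_{d^\ast}},$ and substituting and taking square roots yields
\begin{equation*}
\frac{\mathrm{scl}(\gamma)}{\ell(\gamma)} \ll_{N_0} \vol(N') \cdot \mathrm{diam}(N') \cdot \sqrt{\frac{1}{\lambda_1^1(N')_{d^\ast}}}.
\end{equation*}
In the first alternative one instead obtains the unconditional bound $\mathrm{scl}(\gamma)/\ell(\gamma) \ll_{N_0} \vol(N') \cdot \mathrm{diam}(N'),$ which must be reconciled with the target form.

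Finally, I would convert $N'$-invariants to $N$-invariants. The volume satisfies $\vol(N') = d \cdot \vol(N).$ For the diameter, a spanning-tree argument over the $d$ lifts of a Dirichlet fundamental domain for $N$ (in the style of Lemma \ref{treetypediameterbound} combined with Lemma \ref{upperbounddiameterdirichletdomain}) gives $\mathrm{diam}(N') \ll d \cdot \mathrm{diam}(N).$ Hence $\vol(N') \cdot \mathrm{diam}(N') \ll d^2 \cdot \vol(N) \cdot \mathrm{diam}(N),$ which combined with the preceding estimate yields the stated inequality. The main obstacle is the clean handling of the dichotomy in Proposition \ref{whitneyderhamcomparison}: its first alternative supplies a bound with no $\lambda_1^1(N')_{d^\ast}$-dependence, and one must verify that this alternative is still consistent with the $\sqrt{1/\lambda_1^1(N')_{d^\ast}}$-form of the desired inequality (e.g.\ by checking that in this alternative $\lambda_1^1(N')_{d^\ast}$ is not so large as to violate the bound, or by combining the two alternatives into a uniform estimate).
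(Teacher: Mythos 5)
Your proposal follows the same route as the paper's proof: apply Theorem \ref{whitneylambda1controlsscltriangulationindependent} and Proposition \ref{whitneyderhamcomparison} to $N'$, then convert $\vol(N'), \mathrm{diam}(N')$ to $\vol(N), \mathrm{diam}(N)$ via the degree-$d$ covering, and finally transfer the $\mathrm{scl}/\ell$ ratio from $N'$ down to $M$. Two small remarks. First, the paper only needs the single inequality $\mathrm{scl}_M(\gamma) \leq \mathrm{scl}_{N'}(\gamma)$ (from the retraction $p_*: \pi_1(N') \to \pi_1(M)$) together with $\ell_M(\gamma) = \ell_{N'}(\gamma)$, so establishing full equality of scl, as you do, is a bit more than necessary but harmless; your surface-pushforward argument via $r$ is the geometric form of the same fact the paper expresses algebraically. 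Second, the ``obstacle'' you correctly flag concerning the first alternative of Proposition \ref{whitneyderhamcomparison} is real but easily dispatched, and the paper silently elides it: in that alternative one gets $\mathrm{scl}(\gamma)/\ell(\gamma) \ll_{N_0} \vol(N') \cdot \mathrm{diam}(N')$, and this is subsumed by the target bound once one notes that $\lambda_1^1(N')_{d^\ast} \ll_{N_0} 1$ uniformly. That upper bound follows from the Rayleigh quotient characterization by inserting a fixed test form of the shape $d^*\beta$ with $\beta$ a compactly supported $2$-form in an embedded geodesic ball of radius $\tfrac{1}{2}\mathrm{inj}(N_0)$; the quotient $\|dd^*\beta\|^2 / \|d^*\beta\|^2$ then depends only on $N_0$. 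Including this sentence would close the gap you identified; otherwise your proof is complete and matches the paper.
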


\begin{remark}
The work of Bergeron-Haglund-Wise \cite{BHW} produces many interesting examples satisfying the hypotheses of Proposition \ref{stealingboundsfromhighdimension}. 

The main theorems of the present paper relate $\mathrm{scl}$ and $\frac{1}{\lambda_1^1}.$  Proposition \ref{stealingboundsfromhighdimension} punts the difficulty of bounding $\lambda_1^1(M)$ below to that of bounding $\lambda_1^1(N')$ below.  \emph{This should be regarded as a significant gain, since the 1-form spectrum of $N'$ should be much easier to bound away from 0 than the 1-form spectrum of $M$}; see Remark \ref{1formspectrahyperbolicmanifolds}.  In particular, modulo the hope expressed in Remark \ref{1formspectrahyperbolicmanifolds} 
%\textcolor{red}{Is $\lambda_1(N') \gg \vol(N')^{-C}$ true?} 
and assuming that $d$ can be taken polynomial in $\vol(N),$ 
%\textcolor{red}{This should definitely be true}, 
Proposition \ref{stealingboundsfromhighdimension} will produce a rich family of examples of $M$ for which $\lambda_1^1(M) \gg \vol(M)^{-C}$ for some constant $C.$ 
\end{remark}

\begin{proof}
Let $\gamma \in \pi_1(M) \subset \pi_1(N)$ be as in the proposition statement.  Let $N'$ be the covering realizing the retraction onto $M.$  By Theorem \ref{whitneylambda1controlsscltriangulationindependent} and Proposition \ref{whitneyderhamcomparison},
\begin{align*} \label{lambda1retract}
\frac{\mathrm{scl}_{N'}(\gamma)}{\ell_{N'}(\gamma)} &\ll_{N_0} \vol(N')^{1/2} \cdot \mathrm{diam}(N') \cdot \sqrt{\frac{1}{\lambda_{1,\mathrm{Whitney}}^1(N')_{d^\ast}}} \nonumber \\
&\ll_{N_0} \vol(N')^{1/2} \cdot \mathrm{diam}(N') \cdot \vol(N')^{1/2} \cdot \sqrt{\frac{1}{\lambda_1^1(N')_{d^\ast}}} \nonumber \\
&\ll_{N_0} d^2 \cdot \vol(N) \cdot \mathrm{diam}(N) \cdot \sqrt{\frac{1}{\lambda_1^1(N')_{d^\ast}}}.
\end{align*}
Also,
$$\mathrm{scl}_M(\gamma) \leq \mathrm{scl}_{N'}(\gamma) \text{ and } \ell_M(\gamma) = \ell_{N'}(\gamma),$$
the latter because $M$ is geodesically embedded in $N'$ and the former because the retraction $p_{\ast} : \pi_1(N') \rightarrow \pi_1(M)$ reduces commutator length.  The conclusion follows.
\end{proof}

\begin{remark}
We emphasize that Proposition \ref{stealingboundsfromhighdimension} does not require any cohomology vanishing hypothesis.  The two key inputs for Proposition \ref{stealingboundsfromhighdimension} are Propositions \ref{whitneylambda1controlsscl} and \ref{whitneyderhamcomparison}.  And indeed, 

\begin{itemize}
\item
Proposition \ref{whitneylambda1controlsscl} upper bounds $\mathrm{scl}_{N'}(\gamma)$ in terms of $\frac{1}{\lambda_1^1(N)_{d^\ast}}$ provided some multiple of $\gamma \in \pi_1(N')$ bounds; no supplementary cohomology vanishing hypothesis is required.   

\item
Proposition \ref{whitneyderhamcomparison} proves $\frac{1}{\lambda_{1,\mathrm{Whitney}}^1(N')_{d^\ast}} \ll_{N_0} \frac{1}{\lambda_{1,\mathrm{Whitney}}^1(N')_{d^\ast}}$; no supplementary cohomology vanishing hypothesis is required.  
\end{itemize}
\end{remark}

\begin{corollary}
Same notation and hypotheses as Proposition \ref{stealingboundsfromhighdimension}.  Suppose in addition that $b_1(M) = 0.$  Then
$$\frac{1}{\lambda_1^1(M)} \ll_{M_0} \mathrm{diam}(M)^2 \cdot \left( d^2 \cdot \vol(N) \cdot \mathrm{diam}(N) \right)^2 \cdot \frac{1}{\lambda_1^1(N')}.$$
\end{corollary}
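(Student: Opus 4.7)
The plan is to apply Proposition \ref{stealingboundsfromhighdimension} to each face-pairing generator of a fundamental domain for $M,$ feed the resulting $\mathrm{scl}$ control into Corollary \ref{lowerboundeigenvalue} to bound $\lambda_1^1(M)_{d^\ast},$ and separately bound $\lambda_1^1(M)_d$ using the results of \S \ref{alternatelowerboundlambda10}. Since $b_1(M) = 0,$ the hypothesis ``some multiple of $\gamma$ bounds'' in Proposition \ref{stealingboundsfromhighdimension} holds for every $\gamma \in \pi_1(M),$ and $\lambda_1^1(M)_d$ coincides with $\lambda_1^0(M),$ so both halves of the argument are available.

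For the coexact part I would fix a tree-type fundamental domain $F$ for $M$ built over a Dirichlet domain for $M_0$ (as in \S \ref{geometrytwotypesfundamentaldomains}); the face-pairings $\gamma_j \in P_F$ satisfy $\ell(\gamma_j) \leq D \ll_{M_0} \mathrm{diam}(M),$ and $\vol(\partial F) \ll_{M_0} \vol(M)$ by Lemma \ref{treetypeareabound}. Combining the bound $\mathrm{sArea}(\gamma) \leq 4\pi\,\mathrm{scl}(\gamma)$ on hyperbolic manifolds (cited in the introduction) with Proposition \ref{stealingboundsfromhighdimension} gives
\begin{equation*}
\sup_{\gamma_j \in P_F} \mathrm{sArea}(\gamma_j) \;\leq\; 4\pi \sup_{\gamma_j} \ell(\gamma_j)\cdot \frac{\mathrm{scl}(\gamma_j)}{\ell(\gamma_j)} \;\ll_{M_0}\; \mathrm{diam}(M)\cdot d^2\,\vol(N)\,\mathrm{diam}(N)\cdot \frac{1}{\sqrt{\lambda_1^1(N')}}.
\end{equation*}
Feeding this into the inequality of Corollary \ref{lowerboundeigenvalue} and squaring produces the stated bound for $1/\lambda_1^1(M)_{d^\ast}$ up to the two subordinate summands $\vol(\partial F)^2$ and $\vol(M)$ arising from the other terms.

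For the closed part, the hypothesis $b_1(M) = 0$ makes $\lambda_1^1(M)_d = \lambda_1^0(M),$ and the estimate of \S \ref{alternatelowerboundlambda10} supplies $1/\lambda_1^1(M)_d \ll_{M_0} \vol(M) \cdot \mathrm{diam}(M)^2,$ which is strictly dominated by the principal term. Taking $\lambda_1^1(M) = \min\{\lambda_1^1(M)_d,\lambda_1^1(M)_{d^\ast}\}$ then finishes.

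The main obstacle I foresee is the bookkeeping required to verify that the subordinate contributions $\vol(\partial F)^2,\ \vol(M),$ and $\vol(M)\cdot\mathrm{diam}(M)^2$ are genuinely absorbed into $\mathrm{diam}(M)^2\cdot(d^2\vol(N)\,\mathrm{diam}(N))^2/\lambda_1^1(N').$ Because no universal lower bound on $1/\lambda_1^1(N')$ is available a priori, one must compare $\vol(M)$ to $d\cdot\vol(N)$ using the chain $M \hookrightarrow N' \xrightarrow{d:1} N$ together with the geometric comparison inherited from $M_0 \subset N_0$ (covolumes, injectivity radii of $M_0$ and $N_0,$ etc.); all of the $O_{N_0}(1)$-type constants that enter in this comparison get folded into the $\ll_{M_0}$ symbol. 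Once this step is carried out, the remainder of the proof is a direct substitution of Proposition \ref{stealingboundsfromhighdimension} into Corollary \ref{lowerboundeigenvalue}.
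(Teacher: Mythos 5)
Your approach matches the paper's, which simply cites Proposition \ref{stealingboundsfromhighdimension} and Corollary \ref{lowerboundeigenvalue}; you also correctly fill in the $\lambda_1^1(M)_d = \lambda_1^0(M)$ half via Lemma \ref{lemmaalternatelowerboundlambda10} and invoke the cited inequality $\mathrm{sArea}(\gamma) \leq 4\pi\,\mathrm{scl}(\gamma)$ to translate Proposition \ref{stealingboundsfromhighdimension} into the $\mathrm{sArea}$ input required by Corollary \ref{lowerboundeigenvalue}. On the one worry you flag: a universal \emph{upper} bound on $\lambda_1^1(N')$ (hence lower bound on $1/\lambda_1^1(N')$) depending only on $N_0$ \emph{is} available by a test-form computation in a geodesic ball of radius $\mathrm{inj}(N_0)$, and combined with $\mathrm{diam}(M),\mathrm{diam}(N)\gg_{N_0}1$ and the comparison of covering degrees $\deg(M\to M_0)\leq\deg(N\to N_0)$ (giving $\vol(M)\ll_{N_0}\vol(N)$ rather than the dimensionally mismatched $\vol(M)\leq d\,\vol(N')$ you wrote), the subordinate summands $\vol(\partial F)^2$, $\vol(M)$, and $\vol(M)\,\mathrm{diam}(M)^2$ are indeed absorbed into the principal term.
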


\begin{proof}
This follows directly from Proposition \ref{stealingboundsfromhighdimension} and Corollary \ref{lowerboundeigenvalue}. 
\end{proof}

\appendix
\section{Estimating $\lambda_1^0(M)$} \label{alternatelowerboundlambda10} 
In this section we give a weak lower bound for the first nonzero eigenvalue of the Laplacian acting on functions on a hyperbolic $n-$manifold. With more work, the bound can be considerably improved, but the easy given bound suffices for our purposes. 
\begin{lemma} \label{lemmaalternatelowerboundlambda10}
There exists $C>0,$ depending only on the minimum of $1$ and the injectivity radius of $M,$ so that 
\begin{align}\lambda_1^0\geq \frac{C}{\mathrm{diam}(M)^2 \cdot \vol(M)}.
\end{align}
\end{lemma}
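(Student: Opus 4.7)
The plan is to apply the variational characterization of $\lambda_1^0$ directly to an eigenfunction realizing the minimum and to control $\|f\|_{L^2}$ pointwise via the diameter, using Moser iteration to convert the resulting sup-norm bound on $df$ back into an $L^2$ bound. Let $f$ realize $\lambda_1^0(M)$, normalized so that $\int_M f = 0$, $\|f\|_{L^2} = 1$, and $\Delta f = \lambda_1^0 f$. We may assume $\lambda_1^0(M) \leq 1$, since otherwise the claim is immediate (the quantity $\mathrm{diam}(M)^2 \vol(M)$ is bounded below in terms of the injectivity radius alone, so the inequality holds trivially after choosing the constant $C$ small enough).

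First, since $f$ is continuous with zero mean on the connected manifold $M$, there is a point $p_0 \in M$ with $f(p_0) = 0$. For every $x \in M$, integrating $df$ along a minimizing geodesic from $p_0$ to $x$ gives
\begin{equation*}
|f(x)|^2 \;=\; |f(x) - f(p_0)|^2 \;\leq\; d(p_0,x)^2 \,\|df\|_{L^\infty(M)}^2 \;\leq\; \mathrm{diam}(M)^2 \,\|df\|_{L^\infty(M)}^2 .
\end{equation*}
Integrating this pointwise bound over $M$ yields the crude Poincar\'{e}-type estimate
$$\|f\|_{L^2}^2 \;\leq\; \vol(M) \cdot \mathrm{diam}(M)^2 \cdot \|df\|_{L^\infty(M)}^2 .$$

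Next, since the Hodge Laplacian commutes with $d$, the 1-form $df$ is an eigenform of the same eigenvalue $\lambda_1^0$. Applying Proposition \ref{supnormeigenfunction} to $df$ with $q = 1$ and (say) $L = \mathrm{inj}(M)/3$, we get for every $p \in M$
$$\|df\|_{L^\infty(B(p,L))} \;\leq\; C(n,1,L,\lambda_1^0)\,\|df\|_{L^2(B(p,2L))} \;\leq\; C(n,1,L,\lambda_1^0)\,\|df\|_{L^2(M)} .$$
Taking the supremum over $p$ and invoking the uniform boundedness clause of Proposition \ref{supnormeigenfunction} (valid because $\lambda_1^0 \leq 1$ and $\mathrm{inj}(M)$ is bounded below), there is a constant $C_\ast$ depending only on the injectivity radius lower bound and $n$ such that $\|df\|_{L^\infty(M)} \leq C_\ast \|df\|_{L^2(M)}$.

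Combining the two bounds,
\begin{equation*}
1 \;=\; \|f\|_{L^2}^2 \;\leq\; C_\ast^{\,2}\,\vol(M)\,\mathrm{diam}(M)^2\,\|df\|_{L^2(M)}^2 \;=\; C_\ast^{\,2}\,\vol(M)\,\mathrm{diam}(M)^2\,\lambda_1^0(M),
\end{equation*}
which rearranges to the desired inequality with $C = 1/C_\ast^{\,2}$. The argument is elementary given Proposition \ref{supnormeigenfunction}; there is no serious obstacle, and the only mildly delicate point is handling the large-$\lambda_1^0$ case, dispatched by the trivial dichotomy above.
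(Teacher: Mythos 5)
Your proof is correct and follows essentially the same route as the paper's: take a normalized eigenfunction, bound it pointwise via the fundamental theorem of calculus from a zero using $\|du\|_{L^\infty}$, and then control $\|du\|_{L^\infty}$ by $\sqrt{\lambda_1^0}$ through Proposition \ref{supnormeigenfunction} applied to the eigen 1-form $du$. The only cosmetic difference is that the paper exhibits a single point $p_2$ with $|u(p_2)| \geq \vol(M)^{-1/2}$ while you integrate the pointwise bound over $M$; you are also slightly more careful in explicitly dispatching the $\lambda_1^0 > 1$ case, which is needed to make the Sobolev constant depend only on the injectivity radius.
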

\begin{proof}
Let $u\in C^\infty(M)$ with $\|u\|_{L^2} =1$ and $\Delta u =\lambda_1^0(M)u$.  
Then $\|du\|_{L^2}^2 = \lambda_1^0(M).$ By Proposition \ref{supnormeigenfunction}, 
\begin{align}\label{gradnorm}\|du\|_{L^\infty}\leq \sqrt{\lambda_1^0} \cdot C \left(n,1,\frac{\mathrm{inj}(M)}{2},\lambda \right).
\end{align}
 Since $u\perp_{L^2}1$ and has $L^2$ norm one, there exist $p_1,p_2\in M$ so that 
$u(p_1) = 0$ and $u(p_2) = \frac{1}{\sqrt{\vol(M)}}$. Then 
\begin{align}\label{suppnorm} \frac{1}{\sqrt{\vol(M)}} = |u(p_2)| \leq d(p_1,p_2)\cdot \sqrt{\lambda} \cdot C \left(n,1,\frac{\mathrm{inj}(M)}{2},\lambda \right).
\end{align}
Hence 
\begin{align}\label{lambda1bnd} \frac{C \left(n,1,\frac{\mathrm{inj}(M)}{2},\lambda \right)^{-2}}{\mathrm{diam}(M)^2 \cdot \vol(M) }   \leq  \lambda .
\end{align}
Since $C(n,1,L,\lambda)$ is a decreasing function of $L$, the result follows. 
\end{proof}

\end{document}